\newtheorem{thm}{Theorem}[section]
\newtheorem{prop}[thm]{Proposition}
\newtheorem{lem}[thm]{Lemma}
\newtheorem{defi}[thm]{Definition}
\begin{document}
\title{A rigidity theorem for quaternionic K\"ahler structures}
\author{Kota Hattori}
\date{}
\maketitle
{\begin{center}
{\it Graduate School of Mathematical Sciences, University of Tokyo\\
3-8-1 Komaba, Meguro, Tokyo 153-8914, Japan\\
kthatto@ms.u-tokyo.ac.jp}
\end{center}}
\maketitle
{\abstract We study the moduli space of quaternionic K\"ahler structures on 
a compact manifold of dimension $4n\ge 12$ from a point of view of Riemannian 
geometry, not twistor theory. Then we obtain a rigidity theorem for 
quaternionic K\"ahler structures of nonzero scalar curvature by observing the moduli 
space.}
\section{Introduction}
According to Berger's classification theorem, the holonomy group of a 
simply-connected, non-symmetric, irreducible Riemannian manifold of dimension 
$N$ is isomorphic to one of the following;
\begin{eqnarray}
SO(N),\ U(N/2),\ SU(N/2),\ Sp(N/4),\ Sp(N/4)Sp(1),\ G_2,\ Spin(7).\nonumber
\end{eqnarray}
The Riemannian geometry of special holonomy groups $SU(N/2)$, $Sp(N/4)$, 
$Sp(N/4)Sp(1)$, $G_2$ and $Spin(7)$ are called Calabi-Yau, hyperK\"ahler, 
quaternionic K\"ahler, $G_2$ and $Spin(7)$ structures, respectively. During 
the last score of twentieth century, the deformation theory of these 
structures are studied according to individual way to each structure. For example, 
the deformations of Calabi-Yau and hyperK\"ahler structures were studied by 
using Kodaira-Spencer theory \cite{B}\cite{Ti}\cite{To}. But we cannot apply 
Kodaira-Spencer theory to the other structures since they do not admit complex 
structures. Joyce showed that the moduli spaces of $G_2$ and $Spin(7)$ structures 
are smooth manifolds by studying the closed differential forms which define 
the structures. Then the purpose of this paper is studying the moduli spaces 
of the quaternionic K\"ahler structures.\\
\quad Each quaternionic K\"ahler structure has an Einstein metric. If the 
metric is Ricci-flat, then it reduces to a hyperK\"ahler structure. So, if 
we denote by $\kappa_g$ the scalar curvature of a Riemannian metric $g$, we 
should consider the case of $\kappa_g >0$ or $\kappa_g <0$ for studying 
quaternionic K\"ahler structures.\\
\quad A Riemannian metric $g$ on $4n$-dimensional manifold $M$ is a 
quaternionic K\"ahler metric if the holonomy group of $g$ is isomorphic to a 
subgroup of $Sp(n)Sp(1)$. Then there are rigidity theorems for the quaternionic 
K\"ahler metrics as follows.
\begin{thm}[{\cite{L}}]
Let $M$ be a compact $4n$-manifold, $n\ge 2$, and let $\{g_t\}$ be a family of quaternionic K\"ahler metrics on $M$, of fixed volume, depending smoothly on $t\in\mathbb{R}$. If $g_0$ has positive scalar curvature then there is a family of diffeomorphisms $\{\psi_t\}\subset {\rm Diff}(M)$ depending smoothly on $t\in\mathbb{R}$ such that $\psi_t^*g_t=g_0$.
\end{thm}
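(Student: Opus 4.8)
The plan is to reduce the statement to an infinitesimal rigidity assertion---the vanishing of transverse--traceless infinitesimal Einstein deformations---and then integrate the resulting gauge vector fields. First I record that a quaternionic K\"ahler metric in dimension $4n\ge 8$ is automatically Einstein, so each $g_t$ satisfies $\mathrm{Ric}(g_t)=\lambda_t g_t$ with $\kappa_{g_t}=4n\lambda_t$ constant on $M$ and depending smoothly on $t$. Since $\kappa_{g_0}>0$, the Einstein constant $\lambda_t$ is positive for $t$ near $0$; I will upgrade this to all of $\mathbb{R}$ at the end by a continuity argument. Writing $h_t=\frac{\partial}{\partial t}g_t$, I apply a Berger--Ebin type decomposition relative to $g_t$, splitting $L^2$-orthogonally $h_t=\mathcal{L}_{X_t}g_t+c_t\,g_t+h_t^{TT}$ with $h_t^{TT}$ transverse--traceless and $c_t$ a function. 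Because a divergence-free trace part must be a constant multiple of $g_t$ on a connected manifold, $c_t$ is constant, and the fixed-volume hypothesis $\int_M\mathrm{tr}_{g_t}h_t\,dV_{g_t}=0$ then forces $c_t=0$.

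Next I differentiate $\mathrm{Ric}(g_t)=\lambda_t g_t$ in $t$. Using that $\mathrm{Ric}'_{g_t}$ is gauge-equivariant, so that it sends the pure-gauge term $\mathcal{L}_{X_t}g_t$ into the image of $\delta^*_{g_t}$ (hence $L^2$-orthogonal to transverse--traceless tensors), the transverse--traceless projection of the differentiated equation yields that $h_t^{TT}$ is an infinitesimal Einstein deformation, namely an eigentensor $\Delta_L h_t^{TT}=2\lambda_t h_t^{TT}$ of the Lichnerowicz Laplacian.

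The heart of the argument, and the step I expect to be the main obstacle, is the vanishing theorem: on a compact quaternionic K\"ahler manifold of positive scalar curvature with $n\ge 2$ there is no nonzero transverse--traceless solution of $\Delta_L h=2\lambda h$. Here the quaternionic K\"ahler structure must be used essentially, not merely the Einstein condition. The strategy is a refined Bochner argument. On an Einstein manifold the Weitzenb\"ock formula reads $\Delta_L=\nabla^*\nabla+2\lambda-2\mathcal{R}$, where $\mathcal{R}$ denotes the natural action of the curvature tensor on symmetric $2$-tensors, so the eigentensor equation becomes $\nabla^*\nabla h=2\mathcal{R}h$, and integration gives $\int_M|\nabla h|^2=2\int_M\langle\mathcal{R}h,h\rangle$. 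The quaternionic K\"ahler structure now enters through $\mathcal{R}$: using the parallel fundamental $4$-form one decomposes $\mathcal{R}$ into its quaternionic-projective-space model part and an $Sp(n)$-valued remainder, and a representation-theoretic analysis of the action on the irreducible $Sp(n)Sp(1)$-summands of transverse--traceless tensors should show $\langle\mathcal{R}h,h\rangle\le 0$ for $n\ge 2$. Then $\nabla h=0$, and a parallel transverse--traceless tensor must vanish because the irreducible $Sp(n)Sp(1)$-holonomy admits no invariant symmetric $2$-tensor other than multiples of $g$. Controlling the sign on every irreducible summand is the delicate point, and is exactly where the rigidity of the curvature model is needed.

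With $h_t^{TT}=0$ established, $h_t=\mathcal{L}_{X_t}g_t$ is pure gauge. Choosing $X_t$ to be the minimal-$L^2$ solution orthogonal to the Killing fields of $g_t$---produced by inverting the elliptic operator $X\mapsto\delta_{g_t}\mathcal{L}_X g_t$ applied to the smoothly varying data $\delta_{g_t}h_t$, whence $X_t$ is smooth in $t$---I integrate the time-dependent flow $\frac{\partial}{\partial t}\psi_t=-X_t\circ\psi_t$, $\psi_0=\mathrm{id}$, which exists for all $t$ because $M$ is compact. Then $\frac{\partial}{\partial t}(\psi_t^*g_t)=\psi_t^*(h_t-\mathcal{L}_{X_t}g_t)=0$, so $\psi_t^*g_t=g_0$. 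Finally, let $I$ be the connected component containing $0$ of the open set $\{t:\lambda_t>0\}$ on which the construction runs. For $t\in I$ we have $g_t$ isometric to $g_0$, hence $\kappa_{g_t}\equiv\kappa_{g_0}>0$; therefore $\lambda_t$ cannot degenerate at a finite endpoint of $I$, so $I=\mathbb{R}$, which completes the proof.
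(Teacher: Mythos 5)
Your argument reduces Theorem~1.1 to the claim that a compact quaternionic K\"ahler manifold of positive scalar curvature and dimension $4n\ge 8$ carries no nonzero transverse--traceless solution of $\Delta_L h=2\lambda h$. That claim is false, so the step you yourself flagged as ``the main obstacle'' is not merely delicate but impossible. By Koiso's classification of infinitesimal Einstein deformations of compact irreducible symmetric spaces (see Besse, \emph{Einstein Manifolds}, Ch.~12), the space $SU(n+2)/S(U(n)\times U(2))=\mathrm{Gr}_2(\mathbb{C}^{n+2})$ admits nonzero infinitesimal Einstein deformations for every $n\ge 2$. But this space is precisely a Wolf space: its symmetric metric is quaternionic K\"ahler with positive scalar curvature, of real dimension $4n$. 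Hence for every $n\ge 2$ your vanishing theorem has a counterexample, and in particular no representation-theoretic refinement of the Bochner argument can produce $\langle\mathcal{R}h,h\rangle\le 0$ on transverse--traceless tensors: if it did, the equation $\nabla^*\nabla h=2\mathcal{R}h$ would force $\nabla h=0$, and irreducibility of the $Sp(n)Sp(1)$-holonomy together with tracelessness would force $h=0$, contradicting Koiso. The conceptual error is the reduction itself. Once you replace the differentiated quaternionic K\"ahler condition by the differentiated Einstein equation, you have thrown away exactly the information the theorem rests on: the derivative $h_t$ of a family of quaternionic K\"ahler metrics is an infinitesimal deformation \emph{through quaternionic K\"ahler metrics}, a much smaller space than the space of infinitesimal Einstein deformations, which can be nonzero on these manifolds.

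For comparison: the paper does not prove Theorem~1.1 at all --- it is quoted from LeBrun, whose proof is twistor-theoretic, resting on vanishing theorems for the Fano complex contact twistor spaces of the family. The rigidity theorem the paper itself proves (Theorem~1.3, valid for $n\ge 3$ and for either sign of the scalar curvature) linearizes the holonomy reduction rather than the Einstein equation: the quaternionic K\"ahler structure is encoded as a closed $4$-form $\Phi$ with isotropy $Sp(n)Sp(1)$, and Lemmas~3.3 and~3.4 show, via Goto's elliptic deformation complex and the vanishing theorems of Homma and Semmelmann--Weingart, that the harmonic space $\mathbb{H}^1_{\Phi_0}$ equals $\mathbb{R}\Phi_0$ --- infinitesimal deformations of the \emph{structure} are only rescalings, even when infinitesimal Einstein deformations of the \emph{metric} exist. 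The gauge-integration step at the end of your proposal has a correct counterpart there (Propositions~3.6--3.7, Lemma~3.8), but it is carried out at the level of $\Phi$. If you want a non-twistor proof, that is the missing idea: differentiate the parallel $4$-form (equivalently, the torsion-free $Sp(n)Sp(1)$-structure), not the Einstein tensor.
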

Moreover, LeBrun and Salamon \cite{L-S} showed that there are, up to isometries and rescalings, only finitely many compact quaternionic K\"ahler metrics of dimension $4n$ of positive scalar curvature for each $n\ge 2$.
\begin{thm}[{\cite{Hr}}]
Let $M$ be a compact $4n$-manifold and let $\{g_t\}$ be a family of 
quaternionic K\"ahler metrics on $M$, of fixed volume, depending smoothly on 
$t\in\mathbb{R}$. If $g_0$ has negative scalar curvature then there is a family of diffeomorphisms $\{\psi_t\}\subset {\rm Diff}(M)$ depending smoothly on $t\in\mathbb{R}$ such that $\psi_t^*g_t=g_0$.
\end{thm}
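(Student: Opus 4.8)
The plan is to reduce the statement to an infinitesimal rigidity assertion and then integrate. Differentiating the family at an arbitrary parameter, set $h_t=\tfrac{\partial}{\partial t}g_t\in\Gamma(S^2T^*M)$. Each $g_t$ is quaternionic K\"ahler, hence Einstein with $\mathrm{Ric}(g_t)=\tfrac{\kappa_{g_t}}{4n}g_t$, so $h_t$ is automatically an (integrable) infinitesimal quaternionic K\"ahler deformation; in particular it solves the linearized Einstein equation together with the linearization of the holonomy reduction to $Sp(n)Sp(1)$. It suffices to show that for every $t$ one has $h_t=\mathcal{L}_{X_t}g_t$ for a vector field $X_t$ depending smoothly on $t$. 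The hypothesis $\kappa_{g_0}<0$ propagates along the whole line by an open--closed argument: the set $U=\{t:\kappa_{g_t}<0\}$ is open by continuity, and on $U$ the infinitesimal rigidity below produces local diffeomorphisms under which $\kappa_{g_t}$ is constant and equal to $\kappa_{g_0}<0$, so $U$ is also closed; since $U\ni 0$ and $\mathbb{R}$ is connected, $\kappa_{g_t}<0$ for all $t$.

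Next I would normalize $h_t$ using the two available freedoms. The fixed-volume hypothesis gives $\int_M\mathrm{tr}_{g_t}h_t\,dV_{g_t}=0$, which removes the homothetic direction $h\sim g_t$, and the action of $\mathrm{Diff}(M)$ lets one subtract a Lie-derivative term. Applying the Berger--Ebin decomposition adapted to the Einstein condition, I would pass to a gauge in which the normalized deformation is transverse and trace-free, so that it satisfies $\nabla^*\nabla h_t=2\mathring{R}(h_t)$, where $\mathring{R}$ is the natural action of the curvature tensor on symmetric $2$-tensors. The reason for retaining the quaternionic K\"ahler condition is that it confines the essential part of $h_t$ to a single $Sp(n)Sp(1)$-irreducible summand of $S^2T^*M$: writing the complexified cotangent bundle as $E\otimes H$ with $E,H$ the standard representations of $Sp(n),Sp(1)$ and decomposing $S^2T^*M\otimes\mathbb{C}$ accordingly, the homothety direction and the directions rotating the almost-complex structures $I,J,K$ account for the summands already removed, and the genuine deformations occupy one distinguished summand on which $\mathring{R}$ can be evaluated.

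On that summand the curvature operator of a quaternionic K\"ahler manifold simplifies, since the full curvature is the sum of $\kappa$ times the symmetric-space curvature of $\mathbb{HP}^n$ and a component of hyperK\"ahler type. Substituting this into $\nabla^*\nabla h_t=2\mathring{R}(h_t)$ should produce a Weitzenb\"ock-type relation of the schematic form $\nabla^*\nabla h_t=c\,\kappa\,h_t$ with a universal constant $c>0$ on the relevant summand. Integrating over the compact manifold then yields $\|\nabla h_t\|_{L^2}^2=c\,\kappa\,\|h_t\|_{L^2}^2$; because $\kappa<0$ the right-hand side is $\le 0$ while the left-hand side is $\ge 0$, forcing $h_t\equiv 0$ in the chosen gauge, so the original $h_t$ equals $\mathcal{L}_{X_t}g_t$. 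The main obstacle of the whole argument is precisely this step: identifying the correct irreducible summand, computing the action of $\mathring{R}$ there, and in particular controlling the hyperK\"ahler-type part of the curvature so that the constant $c$ carries the right sign and the conclusion is governed by the sign of $\kappa$ rather than by the sign of the $\mathbb{HP}^n$ curvature. This is exactly where the negative case departs from the positive one.

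Finally I would integrate. With $X_t$ smooth in $t$, define $\{\psi_t\}$ as the flow of the time-dependent vector field $-X_t$ normalized by $\psi_0=\mathrm{id}$; compactness of $M$ guarantees completeness, so $\psi_t\in\mathrm{Diff}(M)$ for all $t$ and depends smoothly on $t$. The standard transport identity gives $\tfrac{d}{dt}(\psi_t^*g_t)=\psi_t^*\!\left(\partial_t g_t-\mathcal{L}_{X_t}g_t\right)=0$, whence $\psi_t^*g_t=\psi_0^*g_0=g_0$, which is the assertion.
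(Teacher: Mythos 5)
Your outline reproduces the standard reduction (linearize, gauge--fix via Berger--Ebin, prove a vanishing theorem, integrate the resulting vector fields), and the first and last steps are unproblematic; but the decisive middle step is not proven, and in the form you state it, it is false. On a quaternionic K\"ahler manifold the curvature does not reduce to the $\mathbb{HP}^n$ model: by the Alekseevskii decomposition one has $R=\kappa\,R_{\mathbb{HP}^n}+W$, where $W$ is the hyperK\"ahler-type (quaternionic Weyl) component, which is pointwise unconstrained by $\kappa$ and has no sign. Consequently, on whatever $Sp(n)Sp(1)$-irreducible summand of $S^2T^*M$ you place the essential part of $h_t$, the curvature action is $\mathring{R}=c\,\kappa\,\mathrm{id}+\mathring{W}$ rather than $c\,\kappa\,\mathrm{id}$, and the integrated identity reads
\begin{eqnarray}
\|\nabla h_t\|^2_{L^2}\ =\ c\,\kappa\,\|h_t\|^2_{L^2}+\int_M\langle\mathring{W}h_t,h_t\rangle\, vol_{g_t},\nonumber
\end{eqnarray}
whose last term can have either sign; negativity of $\kappa$ alone concludes nothing. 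Eliminating the $W$-contribution is the entire content of the theorem, and you flag it yourself as ``the main obstacle'' and leave it open, so the proposal is a plan, not a proof. For comparison: the paper never proves the quoted statement at all --- it is Horan's theorem, established in \cite{Hr} by twistor theory --- and when the paper proves its own analogue (Lemma 5.1), the $W$-problem is handled not by a single Weitzenb\"ock identity but by a system: $\nabla$ restricted to the relevant bundle is split into irreducible first-order operators $D_{a,b}$, the Homma / Semmelmann--Weingart identities (14)--(16) relating $B_{a,b}=(D_{a,b})^*D_{a,b}$ to $\kappa$ are invoked \cite{Ho}\cite{S-W}, and a carefully chosen combination, $2(n^2-n-2)\times(14)-n(n+3)\times(15)+\tfrac{2n+1}{2}\times(16)$, is taken precisely so that the curvature terms of indefinite sign cancel, leaving a vanishing sum of squares $\|D_{a,b}\alpha_+\|^2_{L^2}$; only then does $\kappa\neq 0$ (either sign, simultaneously) force vanishing. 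Nothing in your sketch produces such a cancellation.

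A second, smaller gap: the claim that the linearized quaternionic K\"ahler condition confines $h_t$ pointwise to a single irreducible summand of $S^2T^*M$ is not automatic. A quaternionic K\"ahler deformation deforms the rank-$3$ bundle of local almost complex structures together with the metric, so the linearized condition is a coupled condition on that pair, not an algebraic constraint on $h_t$ alone; you must fix this internal gauge before any ``one summand'' statement makes sense. This is exactly the difficulty the paper sidesteps by encoding the whole structure in the closed $4$-form $\Phi$ and working with Goto's complex $\Gamma(E^k_{\Phi_0})$ instead of with the metric. The open--closed propagation of $\kappa_{g_t}<0$ and the final integration of the time-dependent field $-X_t$ are fine as stated.
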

The above two theorems are proven by using twistor theory.\\
\quad In this paper, we will prove the rigidity for quaternionic K\"ahler 
structures in the case of $\kappa_g >0$ and $\kappa_g <0$ at the same time 
using Riemannian geometry without using twistor theory.\\
\quad We apply \cite{G} to the description of the moduli spaces of quaternionic 
K\"ahler structures. In \cite{G}, Goto introduced a notion of topological 
calibration which gives a unified framework of the deformation theory of 
Calabi-Yau, hyperK\"ahler, $G_2$ and $Spin(7)$ structures. The moduli space of 
topological calibrations is constructed in Riemannian geometric way.\\
\quad We define the set of quaternionic K\"ahler structures of nonzero 
scalar curvature on $M$ in Section 3 and denote it by 
$\widetilde{\mathcal{M}}_{qK}$ . Since $\widetilde{\mathcal{M}}_{qK}$ is a subset of closed 
$4$-forms on $M$, then $\mathcal{G}:={\rm Diff}_0(M)\times \mathbb{R}_{>0}$ acts on 
$\widetilde{\mathcal{M}}_{qK}$ by the pull-back and scalar multiple. So we 
have a quotient space 
$\mathcal{M}_{qK}:=\widetilde{\mathcal{M}}_{qK}/\mathcal{G}$ and the quotient map $\pi_{qK} 
:\widetilde{\mathcal{M}}_{qK}\to\mathcal{M}_{qK}$. We will show a rigidity theorem for quaternionic K\"ahler 
structures as follows.
\begin{thm}
Let $\{\Phi_t\}_{t\in\mathbb{R}}\in\widetilde{\mathcal{M}}_{qK}$ be a 
continuous family of quaternionic K\"ahler structures on compact 
$4n$-dimensional manifold $M$ for $n\ge 3$. Then we have 
$\pi_{qK}(\Phi_t)=\pi_{qK}(\Phi_0)$ for any $t\in\mathbb{R}$.
\end{thm}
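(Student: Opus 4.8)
The plan is to deduce the global rigidity from an infinitesimal rigidity statement within the moduli framework of \cite{G}. Since $n\ge 3$, Swann's theorem guarantees that a quaternionic K\"ahler structure is completely encoded by its fundamental $4$-form: pointwise algebraic type together with closedness of this form forces the holonomy into $Sp(n)Sp(1)$. This is exactly what lets us regard $\widetilde{\mathcal{M}}_{qK}$ as a subset of the closed $4$-forms and apply Goto's topological calibration machinery. The first step is therefore to record, for each $\Phi\in\widetilde{\mathcal{M}}_{qK}$, the associated Einstein metric $g_\Phi$ with scalar curvature $\kappa_\Phi\ne 0$, and to set up the local model of $\mathcal{M}_{qK}$ near $[\Phi]$ supplied by \cite{G}: a Kuranishi-type slice whose tangent space is the quotient of the space of closed infinitesimal deformations of $\Phi$ by the tangent space to the $\mathcal{G}$-orbit, that is, by $\{\mathcal{L}_X\Phi+c\Phi : X\in\Gamma(TM),\ c\in\mathbb{R}\}$. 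Here the closed infinitesimal deformations are the closed sections of the subbundle $W\subset\Lambda^4T^*M$ tangent, at each point, to the $GL$-orbit of $\Phi$, i.e. singled out by the pointwise stabilizer $Sp(n)Sp(1)$.

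The heart of the argument is to prove that this tangent space vanishes at every $\Phi$. Using Hodge theory relative to $g_\Phi$, I would identify the infinitesimal deformations modulo gauge with a space of harmonic sections of $W$. Decomposing $W$ into irreducible $Sp(n)Sp(1)$-summands via the Salamon--Swann calculus for $TM\otimes\mathbb{C}=H\otimes E$, the deformations not accounted for by $\mathcal{L}_X\Phi+c\Phi$ are confined to a small number of components. On each such component I would invoke a Weitzenb\"ock formula, writing the Hodge Laplacian as the connection Laplacian plus a curvature endomorphism. Because $g_\Phi$ is Einstein with $\kappa_\Phi\ne 0$, this curvature term acts as a nonzero scalar of a definite sign on the surviving summand, so any harmonic section there must vanish identically. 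This is the step where $\kappa_\Phi\ne 0$ is indispensable, and where both signs of the scalar curvature are handled uniformly, in contrast to the twistor arguments behind Theorems~\ref{thm} and the negative-curvature analogue.

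Granting the vanishing, local rigidity follows. By continuity of $t\mapsto\Phi_t$, for each $t_0$ the structures $\Phi_t$ with $t$ near $t_0$ lie in the slice neighbourhood of $\Phi_{t_0}$ provided by \cite{G}; since the tangent space vanishes, this neighbourhood meets $\widetilde{\mathcal{M}}_{qK}$ in a single $\mathcal{G}$-orbit, whence $\pi_{qK}(\Phi_t)=\pi_{qK}(\Phi_{t_0})$ on it. Thus $t\mapsto\pi_{qK}(\Phi_t)$ is locally constant, so the set $\{t\in\mathbb{R}:\pi_{qK}(\Phi_t)=\pi_{qK}(\Phi_0)\}$ is open with open complement; as it contains $0$ and $\mathbb{R}$ is connected, it is all of $\mathbb{R}$, which is the assertion.

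I expect the main obstacle to be the vanishing theorem of the second paragraph. Two points require care. First, one must correctly isolate the subbundle of genuine infinitesimal deformations and verify that the elliptic theory of \cite{G} applies, so that ``infinitesimal deformation modulo gauge'' is genuinely computed by harmonic sections; this is where compactness of $M$ and ellipticity of the linearized operator enter. Second, one must check that the Weitzenb\"ock curvature endomorphism has no zero eigenvalue on the surviving $Sp(n)Sp(1)$-summands, a representation-theoretic computation that again relies on $n\ge 3$: for $n=2$ the algebra of $4$-forms on an $8$-manifold degenerates, Swann's characterization fails, and with it the entire setup breaks down.
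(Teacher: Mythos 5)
Your proposal follows essentially the same route as the paper's proof: Goto's calibration framework with the deformation complex, ellipticity of that complex plus a Bochner--Weitzenb\"ock vanishing theorem exploiting $\kappa_{g_\Phi}\neq 0$ to kill the tangent space modulo the directions $\mathcal{L}_X\Phi+c\Phi$ (the paper's Lemmas 3.3 and 3.4, proved in Sections 4 and 5), then a slice argument giving local rigidity and connectedness of $\mathbb{R}$ to globalize (the paper's Lemma 3.5, Propositions 3.6--3.7, and Theorem 3.10). The differences are only in completeness, not in strategy: the paper establishes the infinitesimal-to-local step not by citing a ready-made Kuranishi model (which \cite{G} does not supply for the quaternionic K\"ahler case) but by an explicit quadratic estimate using the Green operator of the elliptic complex, and it needs an additional regularity lemma (Lemma 3.8, via Palais' theorem on differentiability of isometries) to upgrade Sobolev-class gauge equivalence to equivalence under the smooth group $\mathcal{G}$, a point your sketch passes over silently.
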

To show Theorem 1.3, we have to evaluate the dimension of the formal 
tangent space of $\mathcal{M}_{qK}$ at $\pi_{qK}(\Phi)$. In Section 2, we 
introduce the deformation complex of quaternionic K\"ahler structures
\begin{eqnarray}
\cdots\stackrel{d}{\longrightarrow}\Gamma(E^k_{\Phi})\stackrel{d}{\longrightarrow}\Gamma(E^{k+1}_{\Phi})\stackrel{d}{\longrightarrow}\cdots.\nonumber
\end{eqnarray}
for each $\Phi\in\widetilde{\mathcal{M}}_{qK}$ along \cite{G}. If we denote the 
$k$-th cohomology group of the above complex by $H^k(\sharp_{\Phi})$, then 
the formal tangent space of $\mathcal{M}_{qK}$ is given by 
$H^1(\sharp_{\Phi})/\mathbb{R}$. To prove the rigidity theorem, we need to show that\\
(I) the deformation complexes are elliptic complexes,\\
(II) $H^1(\sharp_{\Phi})\cong\mathbb{R}$.\\
\quad It is shown that (I) is true in the case of Calabi-Yau, 
hyperK\"ahler, $G_2$ and $Spin(7)$ structures in \cite{G}. But if we try to show (I) in the 
case of quaternionic K\"ahler structures, we have to deal with $4$-forms or 
$5$- forms of $4n$-dimensional vector space, which are so complicated. 
Hence we need more systematic method to study the deformation complex. Then we 
introduce a new method for showing (I) in Section 4.\\
\quad In Section 4, we introduce new complexes called the deformation 
complexes of torsion-free $Sp(n)Sp(1)$-structures, and show they are the 
elliptic complexes. Then we solve (I) by constructing the isomorphisms between the 
deformation complexes of quaternionic K\"ahler structures and new ones. 
Since these discussions can be applied to the other structures, we can regard 
the results in Section 4 as the unified method to study the deformation 
complexes of topological calibrations.\\
\quad We prove (II) in Section 5 by using the Bochner-Weitzenb\"ock formulas and 
vanishing theorems on quaternionic K\"ahler manifolds due to Homma \cite{Ho}, 
Semmelmann and Weingart \cite{S-W}.\\
\quad Each quaternionic K\"ahler structure 
$\Phi\in\widetilde{\mathcal{M}}_{qK}$ induces a Riemannian metric $g_{\Phi}$ on $M$. If $g$ is a 
quaternionic K\"ahler metric of nonzero scalar curvature, then there is a 
quaternionic K\"ahler structure $\Phi\in\widetilde{\mathcal{M}}_{qK}$ such that 
$g_{\Phi}=g$. Then, it is important to study how many quaternionic K\"ahler 
structures which induce a given quaternionic K\"ahler metric $g$. We will 
obtain the following theorem in Section 6.
\begin{thm}
Let $(M,g)$ be a $4n$-dimensional Riemannian manifold for $n\ge 3$ and 
$\widetilde{\mathcal{M}}_{qK}(g):=\{\Phi\in\widetilde{\mathcal{M}}_{qK};\ 
g_{\Phi}=g\}$. If $g$ is a quaternionic K\"ahler metric, then there is a unique element in 
$\widetilde{\mathcal{M}}_{qK}(g)$.
\end{thm}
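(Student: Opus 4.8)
The plan is to reduce the uniqueness statement to a pointwise problem in invariant theory by exploiting the fact that every $\Phi\in\widetilde{\mathcal{M}}_{qK}(g)$ is parallel for the Levi-Civita connection $\nabla^g$. Indeed, such a $\Phi$ is a closed $4$-form lying pointwise in the $GL(4n,\mathbb{R})$-orbit that defines an $Sp(n)Sp(1)$-structure, and by the classical result of Salamon and Swann (valid for $4n\ge 8$) closedness of such a fundamental $4$-form forces $\nabla^g\Phi=0$; hence the holonomy of $g$ is contained in $Sp(n)Sp(1)$. Since every element of $\widetilde{\mathcal{M}}_{qK}$ has nonzero scalar curvature, $g$ is a non-Ricci-flat Einstein metric, so its holonomy cannot reduce to $Sp(n)$; as the scalar curvature is a nonzero multiple of the trace of the curvature of the $Sp(1)$-factor, the holonomy of $g$ is \emph{exactly} $Sp(n)Sp(1)$.

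First I would invoke the holonomy principle: evaluation at a point $p\in M$ identifies the $\nabla^g$-parallel $4$-forms with the $Sp(n)Sp(1)$-invariant elements of $\Lambda^4(\mathbb{R}^{4n})^\ast$. Writing $T_pM\otimes\mathbb{C}\cong E\otimes H$ with $E=\mathbb{C}^{2n}$ the standard $Sp(n)$-module and $H=\mathbb{C}^2$ the standard $Sp(1)$-module, I would compute $\big(\Lambda^4(E\otimes H)\big)^{Sp(n)\times Sp(1)}$. Since $Sp(n)\times Sp(1)$ commutes with the $S_4$-action permuting the four tensor slots, this space is the sign-isotypic part of $(E^{\otimes 4})^{Sp(n)}\otimes(H^{\otimes 4})^{Sp(1)}$. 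By the first fundamental theorem for the symplectic groups these invariants are spanned by the three perfect matchings of the four slots contracted with $\omega_E$, respectively $\omega_H$; Schur--Weyl duality then identifies the $Sp(n)$-invariants (for $n\ge 2$) with the $S_4$-module $\mathbf 1\oplus S^{(2,2)}$ and the $Sp(1)$-invariants with $S^{(2,2)}$. As $(\mathbf 1\oplus S^{(2,2)})\otimes S^{(2,2)}\cong\mathbf 1\oplus\mathrm{sgn}\oplus 2\,S^{(2,2)}$ under the diagonal $S_4$-action, the sign multiplicity is $1$, so the space of invariant $4$-forms is one-dimensional, spanned by the fundamental form $\Omega_g$ of $g$.

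The conclusion is then immediate. Any $\Phi\in\widetilde{\mathcal{M}}_{qK}(g)$ is $\nabla^g$-parallel, hence $\Phi=c\,\Omega_g$ for a constant $c$. Because $\Phi$ and $\Omega_g$ induce the \emph{same} metric $g$, they lie in the common $O(4n)$-orbit of $\Omega_g$, all of whose elements have the fixed pointwise norm $|\Omega_g|_g$ and the same orientation; this forces $|c|=1$ and then $c=1$. Therefore $\Phi=\Omega_g$, and since the existence of $\Omega_g$ in $\widetilde{\mathcal{M}}_{qK}(g)$ is already established, the fibre $\widetilde{\mathcal{M}}_{qK}(g)$ consists of the single point $\{\Omega_g\}$.

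The main obstacle is the invariant-theoretic step: one must verify that the three matchings remain linearly independent for the $E$-factor, which holds precisely when $n\ge 2$, while the single relation forced by $\dim H=2$ deletes the trivial summand of the $H$-matchings, leaving exactly one surviving sign-component. In the language of Section~4 this is equivalent to showing that the closed, metric-preserving infinitesimal deformations of $\Phi$ vanish, and this reformulation is what lets one instead feed the problem into the Bochner--Weitzenb\"ock and vanishing arguments employed for (II); it is through these analytic inputs, rather than the pointwise algebra, that the standing hypothesis $n\ge 3$ of the paper enters.
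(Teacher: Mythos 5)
Your overall route---closedness plus Swann's theorem gives $\nabla^g\Phi=0$, then the holonomy principle and the one-dimensionality of $\bigl(\Lambda^4(\mathbb{R}^{4n})^*\bigr)^{Sp(n)Sp(1)}$ give $\Phi=c\,\Omega_g$---is essentially the same pointwise reduction the paper makes: the paper passes through Joyce's correspondence between $\widetilde{\mathcal{M}}_{qK}(g)$ and $Sp(n)Sp(1)\backslash N(Sp(n)Sp(1))$, and the one-dimensionality of the invariant $4$-forms is exactly what it uses to write $\rho(x^{-1})\Phi_{qK}=\lambda\Phi_{qK}$ for $x\in N(Sp(n)Sp(1))$. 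The genuine gap is your last step, ruling out $c=-1$. Comparing norms does give $|c|=1$, but the appeal to ``the same orientation'' proves nothing: the only orientation information available is that $\rho(x)\Phi_{qK}=c\,\Phi_{qK}$ with $x\in O(4n)$ forces $c^n=\det x\in\{\pm 1\}$, and this is perfectly compatible with $c=-1$ (take $\det x=-1$ when $n$ is odd, $\det x=+1$ when $n$ is even). So determinant or orientation considerations alone cannot exclude the possibility that $-\Phi_{qK}$ lies in the $O(4n)$-orbit of $\Phi_{qK}$, i.e.\ that $-\Omega_g$ is a second element of $\widetilde{\mathcal{M}}_{qK}(g)$; as it stands your argument only yields $\widetilde{\mathcal{M}}_{qK}(g)\subseteq\{\Omega_g,-\Omega_g\}$.

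Excluding $c=-1$ is precisely where the paper's Proposition 6.2 does its real work, and you need that argument (or an equivalent one). If $\rho(x^{-1})\Phi_{qK}=-\Phi_{qK}$ for some $x\in O(4n)$, then $x$ normalizes the common stabilizer $Sp(n)Sp(1)$, so $\rho(x^{-1})$ preserves the isotypic summand $\mathbb{R}\omega_I\oplus\mathbb{R}\omega_J\oplus\mathbb{R}\omega_K\subset\Lambda^2$; writing $\rho(x^{-1})\omega_I=A_1\omega_I+A_2\omega_J+A_3\omega_K$ (and similarly for $\omega_J,\omega_K$) and expanding $\rho(x^{-1})\Phi_{qK}$ as the sum of the squares of these images, comparison of coefficients with $-\Phi_{qK}$ forces $A_1^2+A_2^2+A_3^2=-1$, a contradiction. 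Hence $c=-1$ is impossible and the element is unique. Two smaller corrections: Swann's ``closed implies parallel'' theorem requires $4n\ge 12$, not $4n\ge 8$ (it fails in dimension $8$)---harmless here since $n\ge 3$ is assumed, but it should be stated correctly; and your closing remark that $n\ge 3$ enters through the Bochner--Weitzenb\"ock and vanishing arguments is off target for this particular theorem, whose proof in the paper uses no analysis at all: $n\ge 3$ enters only through Swann's theorem (the paper's Theorem 3.2) and the validity of the representation-theoretic decompositions.
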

\section{Geometric structures defined by closed differential forms}
\quad In this section, we introduce Goto's topological calibration theory 
along \cite{G}, then state its relation to torsion-free $G$-structures.\\
\quad Let $g_0$ be the standard inner product on $V=\mathbb{R}^N$. We have 
$GL_N\mathbb{R}$ representation $\rho:GL_N\mathbb{R}\to GL(\Lambda^k)$ by 
putting $\rho(g)\alpha:=(g^{-1})^*\alpha$ for $g\in GL_N\mathbb{R}$ and 
$\alpha\in\Lambda^k:=\Lambda^kV^*$. Fix
\begin{eqnarray}
\Phi^V\in\bigoplus^l_{i=1}\Lambda^{p_i}\nonumber
\end{eqnarray}
such that the isotropy group 
\begin{eqnarray}
G=\{g\in GL_N\mathbb{R};\rho (g)\Phi^V=\Phi^V\}\nonumber
\end{eqnarray}
is a subgroup of the orthogonal group $O(N)$.\\
\quad In this section, we consider a smooth manifold $M$ of dimension $N$. 
We denote by $\pi_{F(M)}:F(M)\to M$ the frame bundle of $M$ whose fibre is 
$GL_N\mathbb{R}$. If we set
\begin{eqnarray}
R_{\Phi^V}(V):=\{\rho (g)\Phi^V\in\bigoplus^l_{i=1}\Lambda^{p_i};\ g\in 
GL_N\mathbb{R}\},\nonumber
\end{eqnarray}
then there is a left action of $GL_N\mathbb{R}$ on $R_{\Phi^V}(V)$, given by $g_1\cdot\rho (g_2)\Phi^V:=\rho (g_1g_2)\Phi^V$ for $g_1,g_2\in GL_N\mathbb{R}$. Then we have an $R_{\Phi^V}(V)$-bundle
\begin{eqnarray}
R_{\Phi^V}(M):=F(M)\times_{GL_N\mathbb{R}}R_{\Phi^V}(V).\nonumber
\end{eqnarray}
Since $R_{\Phi^V}(M)$ is a subbundle of 
$\bigoplus^l_{i=1}\Lambda^{p_i}T^*M$, we can consider the exterior derivative 
$d\Phi\in\bigoplus^l_{i=1}\Omega^{p_i+1}(M)$ for each $\Phi\in\Gamma(R_{\Phi^V}(M))$. Then we put
\begin{eqnarray}
\widetilde{\mathcal{M}}_{\Phi^V}(M)=\{\Phi\in\Gamma (R_{\Phi^V}(M)) ; 
d\Phi=0\}.\nonumber
\end{eqnarray}
By taking proper $N$ and $\Phi^V$, we can construct the set of Calabi-Yau, 
hyperK\"ahler, $G_2$ and $Spin(7)$ structures on $M$ in this manner \cite{G}. We 
will give $\Phi_{qK}\in\Lambda^4(\mathbb{R}^{4n})^*$ which determines the set of quaternionic K\"ahler structures in Section 3.\\
\quad Next we see that there is one-to-one correspondence between torsion-free $G$-structures on $M$ and $\widetilde{\mathcal{M}}_{\Phi^V}(M)$ under a 
certain condition for $\Phi^V$.\\
\quad Since $G$ is a subgroup of $GL_N\mathbb{R}$, we have a quotient space
\begin{eqnarray}
R_G(M):=F(M)/G,\nonumber
\end{eqnarray}
which is a $GL_N\mathbb{R}/G$-bundle over $M$. Then for each section 
$Q\in\Gamma(R_G(M))$, there is a principal $G$-bundle
\begin{eqnarray}
\widetilde{Q}:=\{u\in F(M) ;\ \pi_G(u)=Q_{\pi_{F(M)}(u)}\},\nonumber
\end{eqnarray}
where $\pi_G:F(M)\to F(M)/G$ is the quotient map.\\
\quad By taking a section $Q\in\Gamma(R_G(M))$, we may write 
$TM=\widetilde{Q}\times_G V$ where $V=\mathbb{R}^N$. Then a Riemannian metric $g_{Q}$ on 
$M$ is induced by $g_Q|_p(u\times_Gx,u\times_Gy)=g_0(x,y)$ for $x,y\in V$, $p\in M$ and $u\in\widetilde{Q}_p$. Since $G$ is a subgroup of $O(N)$, this is well-defined.
\begin{defi}
{\rm Let $Q\in \Gamma (R_G(M))$. A covariant derivative $\nabla$ on $TM$ is 
a connection on $Q$ if $\nabla$ is reducible to a connection on a principal 
$G$-bundle $\widetilde{Q}$.}
\end{defi}
\begin{defi}
{\rm We call $Q\in \Gamma (R_G(M))$ a torsion-free $G$-structure if the 
Levi-Civita connection of $g_Q$ is a connection on $Q$.}
\end{defi}
The natural diffeomorphism $R_{\Phi^V}(V)\cong GL_N\mathbb{R}/G$ induces a 
bijective bundle map
\begin{eqnarray}
\sigma_{\Phi^V}:R_{\Phi^V}(M)\longrightarrow R_G(M).\nonumber
\end{eqnarray}
We put $Q_{\Phi}:=\sigma_{\Phi^V}(\Phi)\in\Gamma (R_G(M))$ for each $\Phi\in\Gamma (R_{\Phi^V}(M))$.\\
\quad We set a $G$-equivariant map $A^k_{\Phi_V}:\Lambda^k\otimes V\to\bigoplus^l_{i=1}\Lambda^{p_i+k-1}$ as $A^k_{\Phi_V}(\omega\otimes v):=\omega\wedge \iota_v \Phi^V$ for $\omega\otimes v\in \Lambda^k\otimes V$, where $\iota$ is the interior product, and put $E^k_{\Phi^V}:=Im(A^k_{\Phi_V})$. Then a bundle map $A^k_{\Phi}:\Lambda^kT^*M\otimes TM\to E^k_{\Phi}$ is induced by $A^k_{\Phi_V}$ for each $\Phi\in\Gamma (R_{\Phi^V}(M))$, where we put $E^k_{\Phi}:=\widetilde{Q_{\Phi}}\times_G E^k_{\Phi^V}$.
\begin{prop}
Let $\nabla$ be a connection on $Q_{\Phi}$ for $\Phi=(\Phi_1, \cdots , 
\Phi_l)\in\Gamma (R_{\Phi^V}(M))$. Then we have
\begin{eqnarray}
d\Phi\ =\ A^2_{\Phi}(T^{\nabla}),\nonumber
\end{eqnarray}
where $T^{\nabla}$ is the torsion tensor of $\nabla$.
\end{prop}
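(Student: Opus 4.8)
The plan is to express $d\Phi$ through the covariant derivative $\nabla$ and then exploit the fact that a connection on $Q_\Phi$ makes $\Phi$ parallel. Recall the universal formula relating $d$ to an arbitrary linear connection: for any $p$-form $\alpha$ and any $\nabla$ on $TM$,
\begin{eqnarray}
d\alpha(X_0,\dots,X_p)&=&\sum_{i=0}^p(-1)^i(\nabla_{X_i}\alpha)(X_0,\dots,\widehat{X_i},\dots,X_p)\nonumber\\
&&+\sum_{0\le i<j\le p}(-1)^{i+j+1}\alpha(T^{\nabla}(X_i,X_j),X_0,\dots,\widehat{X_i},\dots,\widehat{X_j},\dots,X_p),\nonumber
\end{eqnarray}
where $\widehat{X_i}$ denotes omission. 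This is verified directly for $1$-forms from the Koszul formula $d\alpha(X,Y)=X\alpha(Y)-Y\alpha(X)-\alpha([X,Y])$, and extended to all degrees by the derivation property of both sides. Applying it component-wise to $\Phi=(\Phi_1,\dots,\Phi_l)$ reduces the proposition to two claims: that the first sum vanishes, and that the second sum is precisely $A^2_\Phi(T^\nabla)$.

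First I would establish $\nabla\Phi=0$. By hypothesis $\nabla$ is reducible to a connection on the principal $G$-bundle $\widetilde{Q_\Phi}$, so its connection form takes values in the Lie algebra of $G$ and parallel transport preserves $\widetilde{Q_\Phi}$. In any local frame lying in $\widetilde{Q_\Phi}$ the section $\Phi$ has constant coefficients equal to those of the fixed tensor $\Phi^V$, because $Q_\Phi=\sigma_{\Phi^V}(\Phi)$ encodes exactly the $G$-reduction along which $\Phi$ pulls back to $\Phi^V$. Differentiating the invariance $\rho(g)\Phi^V=\Phi^V$ shows that every element of the Lie algebra of $G$ annihilates $\Phi^V$; hence $\nabla\Phi$, which at each point is the action of the connection form on $\Phi^V$, vanishes. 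This kills the first sum above.

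It then remains to identify the torsion term with $A^2_\Phi(T^\nabla)$. Writing $T^\nabla=\sum_a\tau^a\otimes e_a$ with $\tau^a\in\Lambda^2T^*M$ in a local frame $\{e_a\}$, the definition $A^2_{\Phi^V}(\omega\otimes v)=\omega\wedge\iota_v\Phi^V$ gives $A^2_\Phi(T^\nabla)=\sum_a\tau^a\wedge\iota_{e_a}\Phi$. Expanding each wedge of the $2$-form $\tau^a$ with the $(p-1)$-form $\iota_{e_a}\Phi$ by the shuffle formula, the $(i,j)$-term carries the sign $(-1)^{i+j-1}$ and evaluates $\tau^a(X_i,X_j)\,\Phi(e_a,\dots)$; summing over $a$ replaces $\sum_a\tau^a(X_i,X_j)e_a$ by $T^\nabla(X_i,X_j)$, producing exactly the second sum, since $(-1)^{i+j-1}=(-1)^{i+j+1}$. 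This last identification is a bookkeeping check of signs and of the compatibility between the wedge convention and the Koszul convention for $d$; I expect it to be the most error-prone, though not conceptually deep, step. The one genuinely structural ingredient is the vanishing $\nabla\Phi=0$, so I would phrase it invariantly, as annihilation of $\Phi^V$ by the Lie algebra of $G$, rather than relying on a choice of frame.
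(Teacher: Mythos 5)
Your proposal is correct, but it takes a genuinely different route from the paper's. The paper argues pointwise with a synchronous adapted frame: it fixes $p\in M$, takes a local section $\tau$ of $\widetilde{Q_{\Phi}}$ whose induced frame $\xi_i$ satisfies $(\nabla\xi_i)_p=0$, writes $\Phi$ in the corresponding coframe with \emph{constant} coefficients equal to those of $\Phi^V$, and computes $(d\Phi)_p$ directly from the structure functions $d\xi^{\alpha}=c^{\alpha}_{\beta\gamma}\xi^{\beta}\wedge\xi^{\gamma}$; since the frame is $\nabla$-parallel at $p$, the torsion at $p$ reduces to $-[\xi_{\beta},\xi_{\gamma}]_p$, and the computed expression is recognized as $A^2_{\Phi}(T^{\nabla})_p$. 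You instead invoke the universal identity expressing $d$ through an arbitrary connection plus a torsion correction, prove the invariant statement $\nabla\Phi=0$ (every element of the Lie algebra of $G$ annihilates $\Phi^V$, and reducibility forces the connection form into that Lie algebra in $G$-adapted frames), and identify the torsion correction with $A^2_{\Phi}(T^{\nabla})$ via the shuffle formula; your sign bookkeeping is right, since $(-1)^{i+j+1}$ for the convention $T(X,Y)=\nabla_XY-\nabla_YX-[X,Y]$ matches the shuffle sign $(-1)^{i+j-1}$. What your route buys: it makes explicit the structural fact that $\Phi$ is parallel for \emph{every} connection on $Q_{\Phi}$ --- a fact the paper only extracts later, in the proof of Theorem 2.6 and only for the Levi-Civita connection --- and it does not need the existence of an adapted frame synchronous at a point, which the paper asserts without justification. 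What the paper's route buys: self-containedness; it uses nothing beyond the definition of $d$ on decomposable forms, whereas you rely on the $d$-via-connection formula whose extension from $1$-forms to all degrees you only sketch (the ``derivation property of both sides'' step deserves the usual tensoriality-plus-locality argument, or a pointwise frame computation --- which is essentially what the paper does). Both proofs rest on the same two pillars, constancy of the coefficients of $\Phi$ in $G$-adapted frames and identification of the leftover first-order term with torsion, so the difference is one of packaging; yours is the more invariant packaging.
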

\begin{proof}
We calculate $(d\Phi)_p$ for a fixed point $p\in M$. Let $\nabla$ be any 
connection on $Q_{\Phi}$, $v_1,v_2,\cdots,v_N\in V$ be an orthonormal basis 
and $v^1,v^2,\cdots,v^N\in V^*$ be its dual basis.\\
\quad We can take a neighborhood $U$ of $p$ and local section $\tau\in\Gamma(U,\widetilde{Q_{\Phi}})$ which satisfy $(\nabla\xi_i)_p=0$, where $\xi_i|_x=\tau(x)\times_Gv_i$ for $x\in U$.\\
\quad Let $\Phi^V_l = \sum_{i_1,\cdots,i_{p_l}}\Phi^{(l)}_{i_1,\cdots,i_{p_l}} 
v^{i_1}\wedge\cdots\wedge v^{i_{p_l}}\quad (l=1,\cdots ,N)$ and 
$\xi^i|_x=\tau(x)\times_Gv^i$. Then for any $x\in U$, we have
\begin{eqnarray}
(\Phi_l)_x&=&\sigma(x)\times_G \Phi^V_l\nonumber\\
&=&\ \sum_{i_1,\cdots,i_{p_l}}\Phi^{(l)}_{i_1,\cdots,i_{p_l}} 
(\xi^{i_1})_x\wedge\cdots\wedge (\xi^{i_{p_l}})_x.\nonumber
\end{eqnarray}
\quad If we put $d\xi^{\alpha}=c^{\alpha}_{\beta\gamma}\xi^{\beta}\wedge 
\xi^{\gamma}$, where $c^{\alpha}_{\beta\gamma}$ are smooth function on $U$, 
then $[\xi_{\beta},\xi_{\gamma}]=-c^{\alpha}_{\beta\gamma}\xi_{\alpha}$. So 
we have
\begin{eqnarray}
(d\Phi_l)_p
&=&\sum_{i_1,\cdots,i_{p_l}}\sum_{s =1}^{p_l}(-1)^{s 
-1}\Phi^{(l)}_{i_1,\cdots,i_{p_l}} (\xi^{i_1})_p\wedge\cdots\wedge(d\xi^{i_{s}})_p\wedge\cdots\wedge 
(\xi^{i_{p_l}})_p\nonumber\\
&=&\sum_{i_1,\cdots,i_{p_l}}\sum_{s 
=1}^{p_l}\sum_{\beta,\gamma}\Phi^{(l)}_{i_1,\cdots,i_{p_l}} \{c^{i_s}_{\beta\gamma}\xi^{\beta}\wedge 
\xi^{\gamma}\wedge\iota_{\xi_{i_s}}(\xi^{i_1}\wedge\cdots\wedge 
\xi^{i_{p_l}})\}_p\nonumber\\
&=&\sum_{i_1,\cdots,i_{p_l}}\sum_{\alpha,\beta,\gamma}\Phi^{(l)}_{i_1,\cdots,i_{p_l}} 
\{c^{\alpha}_{\beta\gamma}\xi^{\beta}\wedge 
\xi^{\gamma}\wedge\iota_{\xi_{\alpha}}(\xi^{i_1}\wedge\cdots\wedge \xi^{i_{p_l}})\}_p\nonumber\\
&=&A^2_{\Phi}(\sum_{\alpha,\beta,\gamma}c^{\alpha}_{\beta\gamma}\xi^{\beta}\wedge 
\xi^{\gamma}\otimes{\xi_{\alpha}})_p\nonumber
\end{eqnarray}
Since we may write
\begin{eqnarray}
T^{\nabla}(\xi_{\beta},\xi_{\gamma})_p &=& 
-[\xi_{\beta},\xi_{\gamma}]_p\nonumber\\
&=& -c^{\alpha}_{\beta\gamma}(\xi_{\alpha})_p,\nonumber
\end{eqnarray}
then we have $d\Phi_p=A^2_{\Phi}(T^{\nabla})_p$ for any $p\in M$
\end{proof}
Note that Lie group $G$ acts on $\textswab{g}:=Lie(G)\subset 
End(V)=V^*\otimes V$ by the adjoint action. Let $v_1,v_2,\cdots ,v_N\in V$ be a basis and 
$v^1,v^2,\cdots ,v^N\in V$ be its dual basis. For each $Q\in \Gamma 
(R_G(M))$, there is a sub vectorbundle 
$\hat{\textswab{g}}^k_Q:=\widetilde{Q}\times_G\textswab{g}^k$ of $\Lambda^kT^*M\otimes TM$, where
\begin{eqnarray}
\textswab{g}^k:=span\{\sum_{i,j}(\alpha\wedge a_i^jv^i)\otimes 
v_j;\alpha\in\Lambda^{k-1} , \sum_{i,j}a_i^jv^i\otimes 
v_j\in\textswab{g}\}\subset\Lambda^k\otimes V\nonumber
\end{eqnarray}
for $k\ge 2$, and
\begin{eqnarray}
\textswab{g}^1:=\textswab{g},\quad\textswab{g}^0:=\{0\}.\nonumber
\end{eqnarray}
Then we have an orthogonal decomposition $\Lambda^k\otimes 
V=\textswab{g}^k\oplus P^k_\textswab{g}$ where $P^k_\textswab{g}$ is the orthogonal 
complement. If we put $\hat{P}^k_Q:=\widetilde{Q}\times_G P^k_\textswab{g}$, we 
have an orthogonal decomposition
\begin{eqnarray}
\Lambda^kT^*M\otimes TM=\hat{\textswab{g}}^k_Q\oplus \hat{P}^k_Q\nonumber
\end{eqnarray}
with respect to $g_Q$.\\
\quad Let $\bar{A}^k_{\Phi_V}:=A^k_{\Phi_V}|_{P^k_{\textswab{g}}}$. It is 
clear that $\textswab{g}^k$ is a subspace of $Ker(A^k_{\Phi^V})$ from the 
definitions of $\textswab{g}^k$ and $A^k_{\Phi^V}$. If we assume that $dim 
E^k_{\Phi^V}=dim P^k_{\textswab{g}}$, then the induced bundle map 
$\bar{A}^k_{\Phi}:\hat{P}^k_{Q_{\Phi}}\to E^k_{\Phi}$ is an isomorphism for each $\Phi\in\Gamma (R_{\Phi^V}(M))$.
\begin{prop}[{\cite{S1}}]
We define a linear map $\mathbf{a}:V^*\otimes End(V)\ \to \ 
\Lambda^2\otimes V$ by
\begin{eqnarray}
\mathbf{a}(u_1\otimes u_2\otimes v):=u_1\wedge u_2\otimes v=(u_1\otimes 
u_2-u_2\otimes u_1)\otimes v\nonumber
\end{eqnarray}
for $u_1,\in V^*$, $u_2\otimes v\in V^*\otimes V=End(V)$.\\
Then $\mathbf{a}|_{V^*\otimes \mathbf{so}(N)}:V^*\otimes \mathbf{so}(N)\to 
\Lambda^2\otimes V$ is an isomorphism, where $\mathbf{so}(N)$ is the Lie 
algebra of $O(N)$.
\end{prop}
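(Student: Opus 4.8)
The plan is to prove the statement by a dimension count together with injectivity, which reduces the problem to a short piece of linear algebra. Since $\mathbf{so}(N)$ consists of the skew-symmetric endomorphisms of $V$, we have $\dim\mathbf{so}(N)=\tfrac{1}{2}N(N-1)$, so
\[
\dim\bigl(V^*\otimes\mathbf{so}(N)\bigr)=N\cdot\tfrac{1}{2}N(N-1)=\tfrac{1}{2}N^2(N-1)=\tfrac{1}{2}N(N-1)\cdot N=\dim\bigl(\Lambda^2\otimes V\bigr).
\]
Because the source and target have the same finite dimension, it suffices to prove that $\mathbf{a}|_{V^*\otimes\mathbf{so}(N)}$ is injective.

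To do this I would fix an orthonormal basis $v_1,\dots,v_N$ of $V$ with dual basis $v^1,\dots,v^N$, and use $g_0$ to identify $\mathbf{so}(N)$ with the skew-symmetric tensors. Then an element $T\in V^*\otimes\mathbf{so}(N)$ is written $T=\sum T_{kij}\,v^k\otimes v^i\otimes v_j$, where the membership in $\mathbf{so}(N)$ is exactly the skew-symmetry $T_{kij}=-T_{kji}$ in the last two indices. Directly from the definition of $\mathbf{a}$, which antisymmetrizes the first two slots, one gets $\mathbf{a}(T)=\sum (T_{kij}-T_{ikj})\,(v^k\wedge v^i)\otimes v_j$. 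Hence $\mathbf{a}(T)=0$ forces $T_{kij}=T_{ikj}$, i.e.\ symmetry in the first two indices. The key step is then the standard cyclic argument combining the two opposite symmetry types:
\[
T_{kij}=T_{ikj}=-T_{ijk}=-T_{jik}=T_{jki}=T_{kji}=-T_{kij},
\]
where I alternately use symmetry in the first two indices and skew-symmetry in the last two. This yields $T_{kij}=0$ for all $i,j,k$, so $\ker\bigl(\mathbf{a}|_{V^*\otimes\mathbf{so}(N)}\bigr)=0$ and the map is an isomorphism.

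There is no deep obstruction here; this is the linear-algebraic core of the uniqueness of the Levi-Civita connection, and indeed an equally valid route is to exhibit the inverse explicitly by a Koszul-type formula, which simultaneously gives injectivity and surjectivity. The only genuine care needed is bookkeeping: one must keep the $\mathbf{so}(N)$-skew-symmetry (last two indices) and the $\mathbf{a}$-antisymmetry (first two indices) distinct, and remember that the identification of $\mathbf{so}(N)$ with skew-symmetric tensors uses the fixed inner product $g_0$. I expect the cyclic cancellation above to be the heart of the matter, and the preceding dimension equality to be what makes injectivity alone sufficient.
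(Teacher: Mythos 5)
Your proof is correct, but there is no in-paper argument to compare it against: immediately after stating this proposition, the author writes only ``See Proposition 2.1 of \cite{S1} as to the proof,'' deferring entirely to Salamon's book. Your route --- the dimension count $\dim\bigl(V^*\otimes\mathbf{so}(N)\bigr)=N\cdot\tfrac{1}{2}N(N-1)=\dim\bigl(\Lambda^2\otimes V\bigr)$, which reduces everything to injectivity, followed by the cyclic chain $T_{kij}=T_{ikj}=-T_{ijk}=-T_{jik}=T_{jki}=T_{kji}=-T_{kij}$ that plays symmetry in the first two indices (forced by $\mathbf{a}(T)=0$) against skew-symmetry in the last two (membership in $\mathbf{so}(N)$, read off in an orthonormal basis via $g_0$) --- is precisely the standard proof that the cited reference contains, and it is the linear-algebraic core of uniqueness of the Levi-Civita connection, which is exactly the role the proposition plays later in the paper: Proposition 2.5 uses it to force $\gamma$ to take values in the Lie algebra of $G$, and the proof of Theorem 3.2 uses the injectivity of $\mathbf{a}|_{V^*\otimes\mathbf{so}(4n)}$ in its dimension count. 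So you have supplied a complete, self-contained argument where the paper offers only a citation. One trivial bookkeeping point: your expansion $\mathbf{a}(T)=\sum(T_{kij}-T_{ikj})\,(v^k\wedge v^i)\otimes v_j$ should either be summed over $k<i$ only, or carry a factor of $\tfrac{1}{2}$ if summed over all $k,i$; this normalization has no effect on the kernel computation, hence none on the conclusion.
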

See Proposition 2.1 of \cite{S1} as to the proof.
\begin{prop}
Let $\Phi\in\Gamma (R_{\Phi^V}(M))$ and suppose 
$dimE^2_{\Phi^V}=dimP^2_{\textswab{g}}$. Then $Q_{\Phi}$ is a torsion-free $G$-structure if $d\Phi = 0$.
\end{prop}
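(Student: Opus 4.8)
The plan is to begin with an arbitrary $G$-connection on $Q_{\Phi}$ and to correct it, \emph{within the class of $G$-connections}, into the Levi-Civita connection of $g_{Q_{\Phi}}$; the whole point is that the correction can be carried out inside $\textswab{g}$ itself rather than in the full orthogonal Lie algebra. First I would pick any connection $\nabla$ on $Q_{\Phi}$: such a connection exists because the principal $G$-bundle $\widetilde{Q_{\Phi}}$ admits a principal connection, whose induced covariant derivative on $TM=\widetilde{Q_{\Phi}}\times_G V$ is a connection on $Q_{\Phi}$ in the sense of Definition 2.1. Since $G\subset O(N)$ preserves $g_0$, this $\nabla$ is automatically metric for $g_{Q_{\Phi}}$. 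Applying Proposition 2.3 together with the hypothesis $d\Phi=0$ gives $A^2_{\Phi}(T^{\nabla})=d\Phi=0$, so the torsion $T^{\nabla}$ is a section of $\ker A^2_{\Phi}$.

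Next I would identify this kernel precisely. Pointwise, $\textswab{g}^2$ is already contained in $\ker A^2_{\Phi^V}$, while the hypothesis $\dim E^2_{\Phi^V}=\dim P^2_{\textswab{g}}$ makes the induced map $\bar{A}^2_{\Phi}\colon\hat{P}^2_{Q_{\Phi}}\to E^2_{\Phi}$ an isomorphism, hence injective on $\hat{P}^2_{Q_{\Phi}}$. Combined with the orthogonal decomposition $\Lambda^2T^*M\otimes TM=\hat{\textswab{g}}^2_{Q_{\Phi}}\oplus\hat{P}^2_{Q_{\Phi}}$, this forces $\ker A^2_{\Phi}=\hat{\textswab{g}}^2_{Q_{\Phi}}$, and therefore $T^{\nabla}\in\Gamma(\hat{\textswab{g}}^2_{Q_{\Phi}})$.

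The third step produces the correction term. Directly from the definition of $\textswab{g}^2$ one has $\textswab{g}^2=\mathbf{a}(V^*\otimes\textswab{g})$, and since Proposition 2.4 shows that $\mathbf{a}$ is injective on $V^*\otimes\mathbf{so}(N)\supset V^*\otimes\textswab{g}$, the restriction $\mathbf{a}\colon V^*\otimes\textswab{g}\to\textswab{g}^2$ is an isomorphism. Passing to the associated bundles, $\mathbf{a}\colon T^*M\otimes\hat{\textswab{g}}_{Q_{\Phi}}\to\hat{\textswab{g}}^2_{Q_{\Phi}}$ is an isomorphism, so there is a unique $\chi\in\Gamma(T^*M\otimes\hat{\textswab{g}}_{Q_{\Phi}})$ with $\mathbf{a}(\chi)=-T^{\nabla}$. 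I then set $\nabla'=\nabla+\chi$. Because $\chi$ is valued in $\hat{\textswab{g}}_{Q_{\Phi}}$, the connection $\nabla'$ is again a connection on $Q_{\Phi}$, and since $\textswab{g}\subset\mathbf{so}(N)$ it remains metric; moreover the standard formula for the change of torsion gives $T^{\nabla'}=T^{\nabla}+\mathbf{a}(\chi)=0$.

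Finally, by the uniqueness of the Levi-Civita connection among torsion-free metric connections, $\nabla'$ must coincide with the Levi-Civita connection of $g_{Q_{\Phi}}$; since $\nabla'$ is a connection on $Q_{\Phi}$, Definition 2.2 shows that $Q_{\Phi}$ is a torsion-free $G$-structure, as desired. The step I expect to be the crux is the identification $\ker A^2_{\Phi}=\hat{\textswab{g}}^2_{Q_{\Phi}}$ together with $\textswab{g}^2=\mathbf{a}(V^*\otimes\textswab{g})$: this is exactly what guarantees that the torsion can be annihilated by a correction taking values in $\textswab{g}$ alone, so that reducibility to $\widetilde{Q_{\Phi}}$ is preserved throughout. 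Without the dimension hypothesis the kernel could be strictly larger than $\hat{\textswab{g}}^2_{Q_{\Phi}}$, the correction would in general be forced outside $\textswab{g}$, and the argument would only recover a metric connection rather than a genuine $G$-connection.
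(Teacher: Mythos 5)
Your proposal is correct and is essentially the paper's own argument: both hinge on $A^2_{\Phi}(T^{\nabla})=d\Phi=0$ from Proposition 2.3, the identification $\ker A^2_{\Phi}=\hat{\textswab{g}}^2_{Q_{\Phi}}$ supplied by the dimension hypothesis, and the fact (via Proposition 2.4) that $\mathbf{a}$ restricts to an isomorphism $V^*\otimes\textswab{g}\to\textswab{g}^2$, so the correction stays inside $\textswab{g}$. The only difference is the direction of the bookkeeping: the paper writes the given $G$-connection as $\nabla=\nabla^{\Phi}+\gamma$ against the Levi-Civita connection and deduces that $\gamma$ is $\textswab{g}$-valued, whereas you add a $\textswab{g}$-valued term to kill the torsion and then identify the result as the Levi-Civita connection by uniqueness of torsion-free metric connections.
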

\begin{proof}
\quad Let $\nabla$ be a connection on $Q_{\Phi}$ and assume that $d\Phi = 
0$. We may write $\nabla=\nabla^{\Phi}+\gamma$, where $\gamma$ is a section 
of $\widetilde{Q_{\Phi}}\times_G(V^*\otimes \mathbf{so}(N))$ and 
$\nabla^{\Phi}$ is the Levi-Civita connection of $g_{\Phi}:=g_{Q_{\Phi}}$. Then
\begin{eqnarray}
T^{\nabla}=T^{\nabla^{\Phi}}+\mathbf{a}(\gamma)=\mathbf{a}(\gamma).\nonumber
\end{eqnarray}
Since we have $A^2_{\Phi}(T^{\nabla})=d\Phi=0$ from the assumption and 
Proposition 2.3, then
\begin{eqnarray}
\mathbf{a}(\gamma)=T^{\nabla}\in 
Ker(A^2_{\Phi})=\Gamma(\hat{\textswab{g}}^2_{Q_{\Phi}}).\nonumber
\end{eqnarray}
Then $\gamma$ is a section of $\widetilde{Q_{\Phi}}\times_G(V^*\otimes \textswab{g})$ 
from Proposition 2.4, which means 
$\gamma\in\Omega^1(\hat{\textswab{g}}^1_{Q_{\Phi}})$. Hence we have shown that the Levi-Civita connection 
$\nabla^{\Phi}=\nabla-\gamma$ is a connection on $Q_{\Phi}$.
\end{proof}
\begin{thm}
Let $\nabla$ be the Levi-Civita connection of $g_{\Phi}$ for a section 
$\Phi\in\Gamma (R_{\Phi^V}(M))$. We suppose 
$dimE^2_{\Phi^V}=dimP^2_{\textswab{g}}$. Then the following conditions are equivalent.\\
(i)$d\Phi=0$. \ (ii)$Q_{\Phi}$ is a torsion-free $G$-structure. \ 
(iii)$\nabla\Phi=0$.
\end{thm}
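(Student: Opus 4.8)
The plan is to establish the cycle of implications (i) $\Rightarrow$ (ii) $\Rightarrow$ (iii) $\Rightarrow$ (i), which yields the full equivalence. The first implication (i) $\Rightarrow$ (ii) is precisely Proposition 2.5, so nothing new is required there; this is also the only step that invokes the dimension hypothesis $\dim E^2_{\Phi^V}=\dim P^2_{\textswab{g}}$, and the remaining two implications hold without it.

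For (ii) $\Rightarrow$ (iii), I would reuse the adapted-frame construction already employed in the proof of Proposition 2.3. Suppose $Q_{\Phi}$ is a torsion-free $G$-structure, so that by Definitions 2.1 and 2.2 the Levi-Civita connection $\nabla$ is reducible to the principal $G$-bundle $\widetilde{Q_{\Phi}}$. Fix $p\in M$ and choose a local section $\tau\in\Gamma(U,\widetilde{Q_{\Phi}})$ with $(\nabla\xi_i)_p=0$, where $\xi_i|_x=\tau(x)\times_G v_i$ for the fixed orthonormal basis $v_1,\dots,v_N$ of $V$. In the associated coframe $\{\xi^i\}$ the form $\Phi$ has the constant coefficients of $\Phi^V$, since $\Phi_x=\tau(x)\times_G\Phi^V$ and $\Phi^V$ is $G$-invariant by the very definition of $G$. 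Differentiating this expression at $p$ and using that the dual coframe is likewise $\nabla$-parallel at $p$, every term of $(\nabla\Phi)_p$ vanishes, whence $(\nabla\Phi)_p=0$. As $p$ is arbitrary, $\nabla\Phi=0$.

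Finally, (iii) $\Rightarrow$ (i) is the standard identity that for a torsion-free connection the exterior derivative is the alternation of the covariant derivative, $d\Phi=\sum_i\xi^i\wedge\nabla_{\xi_i}\Phi$. Because $\nabla$ is the Levi-Civita connection, and hence torsion-free, the hypothesis $\nabla\Phi=0$ gives $d\Phi=0$ immediately.

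The main obstacle is the middle step (ii) $\Rightarrow$ (iii): one must make precise that the $G$-invariance of $\Phi^V$, together with reducibility of $\nabla$ to $\widetilde{Q_{\Phi}}$, forces $\Phi$ to have $\nabla$-constant coefficients in an adapted frame. The essential input is that the existence of a local section of $\widetilde{Q_{\Phi}}$ with $(\nabla\xi_i)_p=0$ at each point is exactly what Definition 2.1 guarantees; once this is granted the computation is a direct reprise of the one carried out in Proposition 2.3, and the rest of the cycle reduces to bookkeeping.
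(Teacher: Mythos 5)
Your proposal is correct and follows essentially the same route as the paper: the cycle (i)$\Rightarrow$(ii) via Proposition 2.5, (ii)$\Rightarrow$(iii) by differentiating $\Phi$ in the adapted frame from Proposition 2.3 with $(\nabla\xi_i)_p=0$, and (iii)$\Rightarrow$(i) via $d\Phi=\sum_j\xi^j\wedge\nabla_{\xi_j}\Phi$. Your added observation that the dimension hypothesis $\dim E^2_{\Phi^V}=\dim P^2_{\textswab{g}}$ enters only in the first implication is accurate and consistent with the paper's argument.
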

\begin{proof}
Proposition 2.5 gives (i)$\Longrightarrow$(ii). \\
\quad Assume that $Q_{\Phi}$ is a torsion-free $G$-structure. Then the 
Levi-Civita connection $\nabla$ is a connection on $Q_{\Phi}$. If we take $p\in M$, 
$U$ and $\xi^1,\cdots,\xi^n$ as in Proposition 2.3, then 
$(\nabla\xi^i)_p=0$. So 
\begin{eqnarray}
(\nabla\Phi_i)_p &=& \sum_{j_1,\cdots,j_{p_i}}\sum_{s 
=1}^{p_i}\Phi_{j_1,\cdots,j_{p_i}}^i (\xi^{j_1})_p\wedge\cdots\wedge(\nabla 
\xi^{j_{s}})_p\wedge\cdots\wedge (\xi^{j_{p_i}})_p\nonumber\\
&=& 0.\nonumber
\end{eqnarray}
Thus we have shown $\nabla\Phi=0$ if $Q_{\Phi}$ is a torsion-free 
$G$-structure.\\
\quad If we assume $\nabla\Phi=0$, then 
$d\Phi=\sum_j\xi^j\wedge\nabla_{\xi_j}\Phi=0$.
\end{proof}
Next we consider the deformation complex of 
$\Phi_0\in\widetilde{\mathcal{M}}_{\Phi^V}(M)$.
\begin{prop}[{\cite{G}}]
Let $\Phi_0\in\widetilde{\mathcal{M}}_{\Phi^V}(M)$. Then 
$d\Gamma(E^k_{\Phi_0})$ is a subspace of $\Gamma(E^{k+1}_{\Phi_0})$.
\end{prop}
From Proposition 2.7, we obtain Goto's complex
\begin{eqnarray}
\cdots\stackrel{d}{\longrightarrow}\Gamma(E^k_{\Phi_0})\stackrel{d}{\longrightarrow}\Gamma(E^{k+1}_{\Phi_0})\stackrel{d}{\longrightarrow}\cdots.
\end{eqnarray}
\section{Moduli spaces of the quaternionic K\"ahler structures}
\quad In this section, we state the main result in this paper. First, we 
define quaternionic K\"ahler structures and their moduli space.\\
\quad From now on we consider the case of $N=4n$. We put $I,J,K\in 
M_{4n}\mathbb{R}$ be almost complex structures on $V$ defined by
\[ I=
\left(
\begin{array}{cccc}
O & -E_n & O & O \\
E_n & O & O & O \\
O & O & O & -E_n \\
O & O & E_n & O 
\end{array}
\right)
\quad 
J=
\left(
\begin{array}{cccc}
O & O & -E_n & O \\
O & O & O & E_n \\
E_n & O & O & O \\
O & -E_n & O & O 
\end{array}
\right)
\]
\[ K=
\left(
\begin{array}{cccc}
O & O & O & -E_n \\
O & O & -E_n & O \\
O & E_n & O & O \\
E_n & O & O & O 
\end{array}
\right)
\]
where $E_n$ is the unit matrix of $GL_n\mathbb{R}$, and set
\begin{eqnarray}
\omega_I:=g_0(I\cdot,\cdot),\quad \omega_J:=g_0(J\cdot,\cdot),\quad 
\omega_K:=g_0(K\cdot,\cdot).\nonumber
\end{eqnarray}
Then we have an 4-form
\begin{eqnarray}
\Phi_{qK}:=\omega_I\wedge\omega_I + \omega_J\wedge\omega_J + 
\omega_K\wedge\omega_K\in\Lambda^4,\nonumber
\end{eqnarray}
whose isotropy group
\begin{eqnarray}
G_{\Phi_{qK}}:=\{g\in GL_N\mathbb{R};\ \rho 
(g)\Phi_{qK}=\Phi_{qK}\}\nonumber
\end{eqnarray}
is equal to $Sp(n)Sp(1):=Sp(n)\times_{\{\pm 1\}}Sp(1)$.
\begin{defi}
{\rm Let $M$ be a smooth manifold of dimension $4n$. Then we call 
$\Phi_0\in\Gamma(R_{\Phi_{qK}}(M))$ is a quaternionic K\"ahler structure on $M$ if 
and only if $d\Phi_0=0$.}
\end{defi}
There is the irreducible decompositions of $Sp(n)Sp(1)$-representation 
according to \cite{F}\cite{S1},
\begin{eqnarray}
\Lambda^3\otimes\mathbb{C} &=& 
\lambda^3_0\sigma^3\oplus\lambda^1_0\sigma^3\oplus\lambda^3_1\sigma^1\oplus\lambda^1_0\sigma^1\nonumber,\\
\Lambda^4\otimes\mathbb{C} &=& 
\lambda^4_0\sigma^4\oplus\lambda^2_0\sigma^4\oplus\sigma^4\oplus\lambda^4_1\sigma^2\oplus\lambda^2_1\sigma^2\oplus\lambda^2_0\sigma^2\oplus\lambda^4_2\oplus\lambda^2_0\oplus\sigma^0,\nonumber\\
\Lambda^5\otimes\mathbb{C}&=&\lambda^5_0\sigma^5\oplus 
\lambda^3_0\sigma^5\oplus \lambda^1_0\sigma^5\oplus \lambda^5_1\sigma^3\oplus 
\lambda^3_1\sigma^3\oplus \lambda^5_2\sigma^1\oplus \lambda^3_0\sigma^1\oplus 
\Lambda^3\otimes\mathbb{C}.\nonumber
\end{eqnarray}
Here we write $\lambda^p_q\sigma^r=\lambda^p_q\otimes\sigma^r$ where 
$\lambda^p_q$ is an irreducible $Sp(n)$-module and $\sigma^r=S^r(\mathbb{C}^2)$ 
is an irreducible $Sp(1)$-module. The representation $\lambda^p_q$ has the 
highest weight $(\mu_1,\cdots,\mu_n)$ such that
\[ \mu_l=
\left\{
\begin{array}{ccc}
2 & 1\le l\le q, \\
1 & q+1\le l\le p-q, \\
0 & p-q+1\le l\le n.
\end{array}
\right.
\]
Then we have
\begin{eqnarray}
E^0_{\Phi_{qK}}\otimes\mathbb{C} &=& \lambda^1_0\sigma^1,\nonumber\\
E^1_{\Phi_{qK}}\otimes\mathbb{C} &=& 
\lambda^2_0\sigma^2\oplus\lambda^2_1\sigma^2\oplus\lambda^2_0\oplus\sigma^0,\nonumber
\end{eqnarray}
by the definition of $\Phi_{qK}$ and direct calculation. As to 
$E^2_{\Phi_{qK}}$, there is the irreducible decomposition for $n\ge 3$
\begin{eqnarray}
E^2_{\Phi_{qK}}\otimes\mathbb{C} = \lambda^3_1\sigma^3\oplus 
\lambda^3_0\sigma^1\oplus \Lambda^3\otimes\mathbb{C}.\nonumber
\end{eqnarray}
by \cite{Sw}.\\
\quad Weyl dimension formula \cite{F-H} of $Sp(n)$ reperesentation gives
\begin{eqnarray}
dim_{\mathbb{C}}\ \lambda^p_q = \frac{2^n n! \prod_{1\le i,j\le 
n}(\tilde{\mu}_i - \tilde{\mu}_j)(\tilde{\mu}_i + \tilde{\mu}_j + 2n + 
2)\prod_{k=1}^n(\tilde{\mu}_k + n + 1)}{\prod_{k=1}^n (2k)!}.\nonumber
\end{eqnarray}
Then we can calculate the dimension of $E^2_{\Phi_{qK}}$.
\begin{thm}[{\cite{Sw}}]
Let $M$ be a $4n$-dimensional manifold for $n\ge 3$ and 
$\Phi_0\in\Gamma(R_{\Phi_{qK}}(M))$. Then the Levi-Civita connection $\nabla^{\Phi_0}$ of 
$g_{\Phi_0}$ reduces to the connection of the principal $Sp(n)Sp(1)$-bundle 
$Q_{\Phi_0}$ if and only if $d\Phi_0=0$.
\end{thm}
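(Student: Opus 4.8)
The plan is to deduce the statement directly from Theorem 2.6. By Definitions 2.1 and 2.2, the assertion that the Levi-Civita connection $\nabla^{\Phi_0}$ of $g_{\Phi_0}$ reduces to a connection on the principal $Sp(n)Sp(1)$-bundle $Q_{\Phi_0}$ is exactly the assertion that $Q_{\Phi_0}$ is a torsion-free $Sp(n)Sp(1)$-structure, i.e. condition (ii) of Theorem 2.6. Hence, specializing Theorem 2.6 to $N=4n$, $G=Sp(n)Sp(1)$ and $\Phi^V=\Phi_{qK}$, the desired equivalence ``$d\Phi_0=0$ if and only if $Q_{\Phi_0}$ is torsion-free'' is precisely the implication (i)$\Leftrightarrow$(ii). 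Thus the only thing that remains to be verified is the single hypothesis of Theorem 2.6, namely the dimension equality $\dim E^2_{\Phi_{qK}}=\dim P^2_{\textswab{g}}$, where $\textswab{g}=Lie(Sp(n)Sp(1))$; everything else is already supplied by Section 2.

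To compute $\dim P^2_{\textswab{g}}$ I would use the orthogonal decomposition $\Lambda^2\otimes V=\textswab{g}^2\oplus P^2_{\textswab{g}}$, so that $\dim P^2_{\textswab{g}}=\dim(\Lambda^2\otimes V)-\dim\textswab{g}^2$. Comparing the formula defining $\textswab{g}^2$ with the map $\mathbf{a}$ of Proposition 2.4 shows that $\textswab{g}^2=\mathbf{a}(V^*\otimes\textswab{g})$; since $\mathbf{a}$ is injective on $V^*\otimes\mathbf{so}(4n)\supset V^*\otimes\textswab{g}$, we obtain $\dim\textswab{g}^2=4n\cdot\dim\textswab{g}=4n(2n^2+n+3)$, using $\dim Sp(n)=n(2n+1)$ and $\dim Sp(1)=3$. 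With $\dim(\Lambda^2\otimes V)=4n\cdot\frac{4n(4n-1)}{2}=8n^2(4n-1)$ this yields $\dim P^2_{\textswab{g}}=24n^3-12n^2-12n$.

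To compute $\dim E^2_{\Phi_{qK}}$ I would feed the cited decomposition $E^2_{\Phi_{qK}}\otimes\mathbb{C}=\lambda^3_1\sigma^3\oplus\lambda^3_0\sigma^1\oplus(\Lambda^3\otimes\mathbb{C})$, valid for $n\ge 3$, into the displayed decomposition of $\Lambda^3\otimes\mathbb{C}$. Collecting terms, each of $\lambda^1_0,\lambda^3_0,\lambda^3_1$ occurs with total $Sp(1)$-multiplicity $6$ (using $\dim\sigma^r=r+1$), so that $\dim_{\mathbb{R}}E^2_{\Phi_{qK}}=6(\dim\lambda^1_0+\dim\lambda^3_0+\dim\lambda^3_1)$. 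The Weyl dimension formula gives $\dim\lambda^1_0=2n$, $\dim\lambda^3_0=\frac{2n(2n+1)(n-2)}{3}$ and $\dim\lambda^3_1=\frac{8n(n^2-1)}{3}$; summing produces $\dim_{\mathbb{R}}E^2_{\Phi_{qK}}=24n^3-12n^2-12n$. This matches $\dim P^2_{\textswab{g}}$, the hypothesis of Theorem 2.6 holds, and the proof is complete.

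The main obstacle is the representation-theoretic bookkeeping of the last paragraph: one needs the correct irreducible decomposition of $E^2_{\Phi_{qK}}$, and it is exactly here that $n\ge 3$ enters, since this decomposition degenerates for small $n$; one must also evaluate $\dim\lambda^3_1$ correctly from the Weyl formula with highest weight $(2,1,0,\dots,0)$. A secondary point requiring care is the identification $\textswab{g}^2=\mathbf{a}(V^*\otimes\textswab{g})$, which lets Proposition 2.4 be invoked to compute $\dim\textswab{g}^2$ without directly analyzing the span defining $\textswab{g}^2$.
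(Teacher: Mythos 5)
Your proposal is correct and follows essentially the same route as the paper: reduce the statement to Theorem 2.6, compute $\dim P^2_{\textswab{g}}$ via the injectivity of $\mathbf{a}|_{V^*\otimes\mathbf{so}(4n)}$ and $\dim(\mathbf{sp}(n)\oplus\mathbf{sp}(1))=2n^2+n+3$, and compute $\dim E^2_{\Phi_{qK}}$ from Swann's decomposition and the Weyl dimension formula, both giving $24n^3-12n^2-12n$. Your write-up merely makes explicit some steps the paper leaves implicit (the identification $\textswab{g}^2=\mathbf{a}(V^*\otimes\textswab{g})$ and the individual values $\dim\lambda^1_0$, $\dim\lambda^3_0$, $\dim\lambda^3_1$), all of which check out.
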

\begin{proof}
It suffices to show that $dim E^2_{\Phi_{qK}} = dim 
P^2_{\mathbf{sp}(n)\oplus\mathbf{sp}(1)}$ from Theorem 2.6. By Weyl dimension formula, we have
\begin{eqnarray}
dim E^2_{\Phi_{qK}} = 24n^3 - 12n^2 - 12n,\nonumber
\end{eqnarray}
for $n\ge 3$. From $dim (\mathbf{sp}(n)\oplus\mathbf{sp}(1)) = 2n^2+n+3$ 
and the injectivity of $\mathbf{a}|_{V^*\otimes \mathbf{so}(4n)}$, we have
\begin{eqnarray}
dim P^2_{\mathbf{sp}(n)\oplus\mathbf{sp}(1)} &=& dim (\Lambda^2\otimes V) - 
dim \{V^*\otimes (\mathbf{sp}(n)\oplus\mathbf{sp}(1))\}\nonumber\\
 &=& 24n^3 - 12n^2 - 12n.\nonumber
\end{eqnarray}
\end{proof}
Now we denote the scalar curvature of a Riemannian metric $g$ by 
$\kappa_g$. If the holonomy group $Hol(g)$ is isomorphic to the subgroup of 
$Sp(n)Sp(1)$, then $g$ is Einstein, so $\kappa_g$ is constant \cite{S1}. Moreover 
$Hol(g)$ is isomorphic to $Sp(n)Sp(1)$ if and only if $\kappa_{g_{\Phi_0}}\neq 0$. 
So we put
\begin{eqnarray}
\widetilde{\mathcal{M}}_{qK}:=\{\Phi_0\in\widetilde{\mathcal{M}}_{\Phi_{qK}}(M);\ 
\kappa_{g_{\Phi_0}}\neq 0\}.\nonumber
\end{eqnarray}
Let $\mathcal{G}:={\rm Diff}_0(M)\times\mathbb{R}_{>0}$ where ${\rm 
Diff}_0(M)$ is the identity component of ${\rm Diff}(M)$. Then $\mathcal{G}$ acts 
on $\widetilde{\mathcal{M}}_{qK}$ by putting $(f,c)\cdot\Phi:=cf^*\Phi$ for 
$(f,c)\in\mathcal{G}$. Thus we obtain the moduli spaces of quaternionic 
K\"ahler structures of nonzero scalar curvature
\begin{eqnarray}
\mathcal{M}_{qK}:=\widetilde{\mathcal{M}}_{qK}/\mathcal{G}.\nonumber
\end{eqnarray}
\quad Next we show the rigidity theorem for quaternionic K\"ahler 
structures of nonzero scalar curvature. From now on, we suppose that $M$ is compact. 
We will use following lemmas.
\begin{lem}
Let $M$ be a compact manifold of dimension $4n\ge 12$. Then Goto's complex 
of quaternionic K\"ahler structures
\begin{eqnarray}
\cdots\stackrel{d}{\longrightarrow}\Gamma(E^k_{\Phi_0})\stackrel{d}{\longrightarrow}\Gamma(E^{k+1}_{\Phi_0})\stackrel{d}{\longrightarrow}\cdots\nonumber
\end{eqnarray}
is elliptic complex at $k=1$ for each 
$\Phi_0\in\widetilde{\mathcal{M}}_{\Phi_{qK}}(M)$. In particular, there is the Hodge decomposition
\begin{eqnarray}
\Gamma(E^1_{\Phi_0})=\mathbb{H}^1_{\Phi_0}\oplus 
d\Gamma(E^0_{\Phi_0})\oplus d^*_1\Gamma(E^2_{\Phi_0}),\nonumber
\end{eqnarray}
where $d^*_k$ is a formal adjoint operator of 
$d:\Gamma(E^k_{\Phi_0})\to\Gamma(E^{k+1}_{\Phi_0})$ and $\mathbb{H}^1_{\Phi_0}$ is given by
\begin{eqnarray}
\mathbb{H}^1_{\Phi_0}:=Ker(\triangle_{\sharp}:=dd^*_0+d^*_1d:\Gamma(E^1_{\Phi_0})\to\Gamma(E^1_{\Phi_0})).\nonumber
\end{eqnarray}
\end{lem}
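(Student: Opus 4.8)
The plan is to reduce ellipticity at \(k=1\) to a pointwise algebraic fact about the principal symbol of \(d\), and then to deduce the Hodge decomposition from standard elliptic theory. Recall that \(E^k_{\Phi_{qK}}=\mathrm{Im}(A^k_{\Phi_{qK}})\) with \(A^k(\omega\otimes v)=\omega\wedge\iota_v\Phi_{qK}\), so a typical element of \(E^k_{\Phi_0}\) is a sum \(\sum_i\omega_i\wedge\iota_{v_i}\Phi_0\) with \(\omega_i\in\Lambda^kT^*M\). For any \(\xi\in T_p^*M\) one then has \(\xi\wedge\bigl(\sum_i\omega_i\wedge\iota_{v_i}\Phi_0\bigr)=\sum_i(\xi\wedge\omega_i)\wedge\iota_{v_i}\Phi_0\in E^{k+1}_{\Phi_0}\), so the operator \(L_\xi(\beta):=\xi\wedge\beta\) maps \(E^k_{\Phi_0}\) into \(E^{k+1}_{\Phi_0}\). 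Since \(d\,\Gamma(E^k_{\Phi_0})\subset\Gamma(E^{k+1}_{\Phi_0})\) by Proposition 2.7, the principal symbol at \(\xi\) of the operator \(d\colon\Gamma(E^k_{\Phi_0})\to\Gamma(E^{k+1}_{\Phi_0})\) is exactly \(L_\xi\). Consequently the complex is elliptic at \(k=1\) precisely when the algebraic complex \(E^0_{\Phi_{qK}}\stackrel{L_\xi}{\longrightarrow}E^1_{\Phi_{qK}}\stackrel{L_\xi}{\longrightarrow}E^2_{\Phi_{qK}}\) is exact at the middle term for every nonzero \(\xi\in V^*\); this is the same as invertibility of the symbol of \(\triangle_\sharp\) for \(\xi\neq 0\).

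Next I would remove the dependence on \(\xi\). Because \(Sp(n)\) already acts transitively on the unit sphere of \(V=\mathbb{H}^n\) and the maps \(L_\xi\) are \(Sp(n)Sp(1)\)-equivariant in the pair \((\xi,\beta)\), it suffices to check exactness for a single unit covector \(\xi_0\); the residual symmetry is the stabilizer \(H=\mathrm{Stab}_{Sp(n)Sp(1)}(\xi_0)\), under which all symbol maps are equivariant. The inclusion \(\mathrm{im}(L_{\xi_0}|_{E^0})\subseteq\ker(L_{\xi_0}|_{E^1})\) is immediate from \(\xi_0\wedge\xi_0=0\). The substance is the reverse inclusion: every \(\beta\in E^1_{\Phi_{qK}}\) with \(\xi_0\wedge\beta=0\) should be writable as \(\xi_0\wedge\gamma\) with \(\gamma\in E^0_{\Phi_{qK}}\). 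By exactness of the Koszul complex \((\Lambda^\bullet,\xi_0\wedge)\) such a \(\gamma\) certainly exists in \(\Lambda^3\); what must be shown is that it can be chosen inside the subspace \(E^0_{\Phi_{qK}}\), i.e. that exactness descends from the full exterior algebra to the sub-complex \(E^\bullet_{\Phi_{qK}}\).

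I expect this descent to be the main obstacle, and it is exactly where the \(4\)- and \(5\)-form computations in dimension \(4n\) become unwieldy. The direct route is representation-theoretic: refine the recorded \(Sp(n)Sp(1)\)-decompositions \(E^0\otimes\mathbb{C}=\lambda^1_0\sigma^1\), \(E^1\otimes\mathbb{C}=\lambda^2_0\sigma^2\oplus\lambda^2_1\sigma^2\oplus\lambda^2_0\oplus\sigma^0\), and the decomposition of \(E^2\otimes\mathbb{C}\) (valid for \(n\ge 3\)) into \(H\)-irreducibles, and then match \(\ker(L_{\xi_0}|_{E^1})\) with \(\mathrm{im}(L_{\xi_0}|_{E^0})\) isotypic component by isotypic component; equivalently one verifies the dimension identity \(\dim\ker(L_{\xi_0}|_{E^1})=\dim E^0-\dim\ker(L_{\xi_0}|_{E^0})\) via the Weyl dimension formula together with an explicit identification of the kernel. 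This is precisely the bookkeeping the systematic method of Section 4 is designed to bypass: there the problem is transported, through an isomorphism of complexes, to the deformation complex of torsion-free \(Sp(n)Sp(1)\)-structures, whose ellipticity is more transparent. In a self-contained plan I would carry out the \(H\)-equivariant matching above; in practice I would invoke that identification to conclude symbol exactness at \(k=1\).

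Finally, granted ellipticity at \(k=1\), the Hodge decomposition is formal. The operator \(\triangle_\sharp=dd^*_0+d^*_1d\) on \(\Gamma(E^1_{\Phi_0})\) is formally self-adjoint and non-negative, with \(\langle\triangle_\sharp\beta,\beta\rangle_{L^2}=\|d^*_0\beta\|^2+\|d\beta\|^2\), so \(\mathbb{H}^1_{\Phi_0}=\ker\triangle_\sharp\) consists exactly of the sections that are both closed and co-closed. Since \(M\) is compact and \(\triangle_\sharp\) is elliptic, standard Hodge theory gives the \(L^2\)-orthogonal splitting \(\Gamma(E^1_{\Phi_0})=\mathbb{H}^1_{\Phi_0}\oplus\mathrm{im}\,\triangle_\sharp\) with \(\mathbb{H}^1_{\Phi_0}\) finite-dimensional, and elliptic regularity keeps all summands smooth. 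Rewriting \(\mathrm{im}\,\triangle_\sharp=d\,\Gamma(E^0_{\Phi_0})\oplus d^*_1\Gamma(E^2_{\Phi_0})\), with the two summands mutually orthogonal and each orthogonal to \(\mathbb{H}^1_{\Phi_0}\), yields the asserted decomposition.
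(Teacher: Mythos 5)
Your reduction is set up correctly: since $d\,\Gamma(E^k_{\Phi_0})\subset\Gamma(E^{k+1}_{\Phi_0})$ and $\xi\wedge E^k_{\Phi_{qK}}\subset E^{k+1}_{\Phi_{qK}}$, the symbol of $d$ on Goto's complex is indeed $L_\xi=\xi\wedge\cdot\,$; ellipticity at $k=1$ is equivalent to exactness of $E^0_{\Phi_{qK}}\stackrel{L_{\xi_0}}{\to}E^1_{\Phi_{qK}}\stackrel{L_{\xi_0}}{\to}E^2_{\Phi_{qK}}$ at the middle term for one (hence, by transitivity of $Sp(n)$ on the sphere, every) nonzero covector; and the passage from ellipticity of $\triangle_{\sharp}$ to the Hodge decomposition on a compact manifold is standard. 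But the heart of the lemma --- that exactness --- is never proved. You name it yourself as ``the main obstacle,'' propose an $H$-equivariant isotypic matching that you do not carry out (it would require decomposing $E^1_{\Phi_{qK}}$ and $E^2_{\Phi_{qK}}$ under the stabilizer of $\xi_0$, which is exactly the unwieldy $4$- and $5$-form bookkeeping the paper is designed to avoid), and then fall back on ``invoking'' the identification with the deformation complex of torsion-free $Sp(n)Sp(1)$-structures. That identification is not an available black box: it is precisely the content of the paper's Section 4, i.e.\ the proof you were asked to supply. Note also that your Koszul-complex observation does not help by itself: the set of all $\gamma\in\Lambda^3$ with $\xi_0\wedge\gamma=\beta$ is an affine space $\gamma_0+\xi_0\wedge\Lambda^2$, and showing it meets the $4n$-dimensional subspace $E^0_{\Phi_{qK}}$ is the whole difficulty, not a refinement of it.

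For comparison, the paper's route avoids your ``descent'' problem altogether by never working inside the exterior algebra. It introduces the complex $d^Q_k:=pr_{\hat{P}^{k+1}_Q}\circ d^{\nabla}_k$ on $\Gamma(\hat{P}^k_Q)\subset\Gamma(\Lambda^kT^*M\otimes TM)$ (a complex by the first Bianchi identity, Proposition 4.1), whose symbol exactness at $k=1$ reduces to an elementary linear-algebra statement (Lemma 4.2): if $u\wedge a\in\textswab{g}^2$ then $a=b+u\otimes w$ with $b\in\textswab{g}$, $w\in V$, valid under condition ($\mathbf{C1}$), which every isotropy group of a form satisfies (Proposition 4.4). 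Ellipticity is then transported to Goto's complex through the bundle isomorphisms $\bar{A}^1_{\Phi_0}$, $\bar{A}^2_{\Phi_0}$ and the intertwining relation of Proposition 4.5, where the only representation-theoretic input is the single dimension count $\dim E^2_{\Phi_{qK}}=\dim P^2_{\mathbf{sp}(n)\oplus\mathbf{sp}(1)}=24n^3-12n^2-12n$ already established via Weyl's formula in the proof of Theorem 3.2. Your outline gestures at this mechanism but supplies neither the algebraic lemma nor the intertwining isomorphism, so as written it is a plan for a proof rather than a proof.
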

\begin{lem}
Let $M$ be a compact manifold of dimension $4n\ge 12$. Then we have
\begin{eqnarray}
\mathbb{H}^1_{\Phi_0}=\mathbb{R}\Phi_0\nonumber
\end{eqnarray}
for each $\Phi_0\in\widetilde{\mathcal{M}}_{qK}$.
\end{lem}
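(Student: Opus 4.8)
The plan is to establish the two inclusions $\mathbb{R}\Phi_0\subseteq\mathbb{H}^1_{\Phi_0}$ and $\mathbb{H}^1_{\Phi_0}\subseteq\mathbb{R}\Phi_0$ separately, the first being easy and the second being the real content. For the easy inclusion I would use that $\Phi_0$ is parallel: since $d\Phi_0=0$, Theorem 2.6 together with Theorem 3.2 gives $\nabla^{\Phi_0}\Phi_0=0$, so $\Phi_0$ is both closed and co-closed for the full exterior calculus, i.e. $d\Phi_0=0$ and $\delta\Phi_0=0$. Because $d\colon\Gamma(E^0_{\Phi_0})\to\Gamma(E^1_{\Phi_0})$ is the restriction of the exterior derivative, its formal adjoint $d^*_0$ is the orthogonal projection of $\delta$ onto $\Gamma(E^0_{\Phi_0})$, so $\delta\Phi_0=0$ yields $d^*_0\Phi_0=0$. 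Since $M$ is compact, pairing $\triangle_{\sharp}u$ with $u$ gives $\langle\triangle_{\sharp}u,u\rangle=\|d^*_0u\|^2+\|du\|^2$, hence $u\in\mathbb{H}^1_{\Phi_0}$ if and only if $du=0$ and $d^*_0u=0$. Thus $\Phi_0\in\mathbb{H}^1_{\Phi_0}$, and note that $\Phi_0$ spans the trivial summand $\sigma^0$ of $E^1_{\Phi_0}$ by the identity $\sum_i v^i\wedge\iota_{v_i}\Phi_{qK}=4\Phi_{qK}$.

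For the reverse inclusion I would reduce to a componentwise problem using the $\nabla^{\Phi_0}$-parallel orthogonal decomposition into irreducible $Sp(n)Sp(1)$-subbundles
\[ E^1_{\Phi_0}\otimes\mathbb{C}=\lambda^2_0\sigma^2\oplus\lambda^2_1\sigma^2\oplus\lambda^2_0\oplus\sigma^0, \]
which is parallel precisely because the Levi-Civita connection reduces to $Q_{\Phi_0}$. Writing a harmonic $u=\sum_\mu u_\mu$ along this splitting, the equations $du=0$ and $d^*_0u=0$ become, after the Clebsch-Gordan decompositions of $T^*M\otimes E^1_{\Phi_0}$ and the identification of $d$ and $d^*_0$ with the corresponding Stein-Weiss gradients into $\Gamma(E^2_{\Phi_0})$ and $\Gamma(E^0_{\Phi_0})$, a coupled first-order system on the $u_\mu$. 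The hypothesis $n\ge 3$ enters here, since it guarantees the generic irreducible form of $E^2_{\Phi_0}\otimes\mathbb{C}=\lambda^3_1\sigma^3\oplus\lambda^3_0\sigma^1\oplus\Lambda^3\otimes\mathbb{C}$ on which the mapping properties of $d$ rely.

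I would then invoke the Bochner-Weitzenb\"ock formulas and vanishing results of Homma \cite{Ho} and Semmelmann-Weingart \cite{S-W}. On a quaternionic K\"ahler manifold the curvature endomorphism appearing in the Weitzenb\"ock identity for each associated bundle acts as a scalar on every irreducible summand, and since $g_{\Phi_0}$ is Einstein with $\kappa_{g_{\Phi_0}}\neq 0$ these scalars are explicit nonzero multiples of the scalar curvature. Integrating against $u_\mu$ over the compact $M$, I expect the nontrivial components in $\lambda^2_0\sigma^2$, $\lambda^2_1\sigma^2$, and $\lambda^2_0$ to be forced to vanish. The surviving $\sigma^0$-part is a section $f\Phi_0$ of the trivial bundle, and $d(f\Phi_0)=df\wedge\Phi_0=0$ forces $f$ to be constant (wedging with $\Phi_0$ is injective on $1$-forms for $4n\ge12$); hence $u\in\mathbb{R}\Phi_0$, completing $\mathbb{H}^1_{\Phi_0}=\mathbb{R}\Phi_0$.

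The step I expect to be the main obstacle is making the vanishing work \emph{simultaneously} for $\kappa_{g_{\Phi_0}}>0$ and $\kappa_{g_{\Phi_0}}<0$, since a naive Weitzenb\"ock estimate produces a definite sign only for one sign of the scalar curvature. The resolution I would pursue is to use both harmonicity conditions $du=0$ and $d^*_0u=0$ together, forming a positive linear combination of the gradient-squared terms in the spirit of the refined Weitzenb\"ock formulas of \cite{S-W}, chosen so that the resulting curvature coefficient on each nontrivial summand is nonzero with a sign independent of $\mathrm{sgn}(\kappa_{g_{\Phi_0}})$. The other delicate technical point is matching the abstractly defined operators $d$ and $d^*_0$ of Goto's complex with these representation-theoretic gradients, so that the curvature term of $\triangle_{\sharp}$ is genuinely the parallel endomorphism for which the scalar eigenvalues are known.
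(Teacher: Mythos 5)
Your skeleton coincides with the paper's: the same splitting of $E^1_{\Phi_0}$ into $\lambda^2_0\sigma^2$, $\lambda^2_1\sigma^2\oplus\lambda^2_0$ and $\sigma^0$ (written $\hat{A}^+_{\Phi_0}$, $\hat{A}^-_{\Phi_0}$, $\hat{\mathbb{R}}_{\Phi_0}$ in Section 5), the same appeal to \cite{Ho} and \cite{S-W}, and your easy inclusion $\mathbb{R}\Phi_0\subseteq\mathbb{H}^1_{\Phi_0}$ is fine. But the two steps you leave as ``expected'' are exactly where the content lies, and as sketched they do not go through. The first gap is the decoupling problem. Harmonicity only gives $d(\alpha_0+\alpha_++\alpha_-)=0$ and $p_0d^*(\alpha_0+\alpha_++\alpha_-)=0$; the components are \emph{not} separately closed or co-closed, so no componentwise Weitzenb\"ock formula or vanishing theorem can be invoked yet, and your ``coupled first-order system'' stays coupled. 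The paper's device for breaking the coupling is missing from your plan: the operator $J(\alpha):=*(\alpha\wedge\Phi_0^{n-2})$, which acts as the scalar $|\Phi_{qK}|^2/c_n$ on $\hat{\mathbb{R}}_{\Phi_0}$ and as $\pm\frac{1}{n-1}|\Phi_{qK}|^2/c_n$ on $\hat{A}^{\pm}_{\Phi_0}$. Since $d\alpha=0$ forces $d^*J(\alpha)=0$, one obtains a second linear relation among the $d^*\alpha_\mu$ with \emph{different} coefficients than $d^*_0\alpha=0$; combining the two, and using Schur's lemma (the equivariant maps $\mathbf{T}_0,\mathbf{T}_1,\mathbf{T}_2$) together with the $L^2$-orthogonality of exact and co-exact $1$-forms on a compact manifold, the paper extracts the separate conditions $d^*\alpha_0=0$ and $p_0d^*\alpha_+=0$ (equation (12)), and at the very end $d\alpha_-=d^*\alpha_-=0$, which is what the vanishing theorems for $\lambda^2_1\sigma^2\oplus\lambda^2_0$ actually require.

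The second gap is that your resolution of the sign problem cannot work as stated. In the relevant Weitzenb\"ock identities every curvature term is a universal constant times $\kappa_{g_{\Phi_0}}$, so no linear combination can have a ``curvature coefficient that is nonzero with a sign independent of $\mathrm{sgn}(\kappa_{g_{\Phi_0}})$'': any such term flips sign with $\kappa$. What the paper actually does for $\alpha_+\in\Gamma(\hat{A}^+_{\Phi_0})$ is different. The available conditions are only $p_0d^*\alpha_+=0$ and $\Pi_{\lambda^3_0\sigma^3}(d\alpha_+)=0$, i.e. $B_{-1,-2}\alpha_+=B_{1,3}\alpha_+=0$ for Homma's operators $B_{a,b}=(D_{a,b})^*D_{a,b}$; then the combination $2(n^2-n-2)(14)-n(n+3)(15)+\frac{2n+1}{2}(16)$ of Homma's three identities is chosen so that $\kappa_{g_{\Phi_0}}$ \emph{cancels}, leaving a sign-definite combination of the remaining $B_{a,b}$, whence $B_{1,1}\alpha_+=B_{-1,1}\alpha_+=B_{-1,3}\alpha_+=0$ (this is where $n\ge 3$ enters). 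Substituting back into (14) and (15) then produces two incompatible expressions for $B_{1,-2}\alpha_+$ unless $\frac{2n+1}{n+2}\kappa_{g_{\Phi_0}}\alpha_+=0$, so $\alpha_+=0$ precisely because $\kappa_{g_{\Phi_0}}\neq 0$, with no case distinction on its sign. In short, sign-independence is achieved by \emph{eliminating} $\kappa$ and exploiting overdetermination, not by arranging a sign-definite Bochner estimate; this mechanism, together with the $J$-operator decoupling, is the substance of the proof and is absent from your proposal.
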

We will prove Lemma 3.3 and 3.4 in Section 4 and 5, respectively.\\
\quad Let $F$ be a fibre bundle over $M$ and $k\ge 2n+1$. Then an 
$L^2_k$-section of $F$ is a $C^0$-section by Sobolev embedding theorem. By putting 
$(f,c)\cdot\Phi:=cf^*\Phi\in L^2_k(R_{\Phi_{qK}}(M))$ for $(f,c)\in 
L^2_{k+1}({\rm Diff}_0(M))\times\mathbb{R}_{>0}$ and $\Phi\in 
L^2_k(R_{\Phi_{qK}}(M))$, an infinite dimensional Lie group $\mathcal{G}_{k+1}:=L^2_{k+1}({\rm 
Diff}_0(M))\times\mathbb{R}_{>0}$ acts on a Hilbert manifold 
$L^2_k(R_{\Phi_{qK}}(M))$. Thus we have a quotient topological space
\begin{eqnarray}
\mathcal{A}_k:=L^2_k(R_{\Phi_{qK}}(M))/\mathcal{G}_{k+1}\nonumber
\end{eqnarray}
and the quotient map $\pi_k:L^2_k(R_{\Phi_{qK}}(M))\to\mathcal{A}_k$. Then 
we are going to show that $\pi_k(\widetilde{\mathcal{M}}_{qK})$ is a 
discrete subset of $\mathcal{A}_k$ for $k\ge 2n+1$ and $n\ge 3$. This is proven 
directly from Proposition 3.7.\\
\quad First we consider the neighborhood of $\pi_k(\Phi_0)\in\mathcal{A}_k$ for $\Phi_0\in\widetilde{\mathcal{M}}_{\Phi_{qK}}(M)$. Since $\bar{A}^1_{\Phi_0}:\hat{P}^1_{Q_{\Phi_0}}\to E^1_{\Phi_0}$ is an 
isomorphism, there is the inverse map $(\bar{A}^1_{\Phi_0})^{-1}$. Then we 
define a map $\varphi_{k,\Phi_0}:L^2_k(E^1_{\Phi_0})\to 
L^2_k(R_{\Phi_{qK}}(M))$ by 
$\varphi_{k,\Phi_0}(\alpha):=\rho(e^{(\bar{A}^1_{\Phi_0})^{-1}\alpha})\Phi_0$ for $\Phi_0\in\widetilde{\mathcal{M}}_{\Phi_{qK}}(M)$ and 
$\alpha\in L^2_k(E^1_{\Phi_0})$, where $\rho(g)\Phi_0=(g^{-1})^*\Phi_0$ for $g\in 
L^2_k(GL(TM))$ and $e^X:=\sum^{\infty}_{j=0}X^j/j!$ for $X\in L^2_k(End\ 
TM)$. The differential of the map $\varphi_{k,\Phi_0}$ at the origin is given 
by
\begin{eqnarray}
(\varphi_{k,\Phi_0*})_0(\beta)=-\bar{A}^1_{\Phi_0}(\bar{A}^1_{\Phi_0})^{-1}(\beta)=-\beta\nonumber
\end{eqnarray}
for $\beta\in L^2_k(E^1_{\Phi_0})$. If we put
\begin{eqnarray}
U_{k,\Phi_0}(\varepsilon):=\{\alpha\in L^2_k(E^1_{\Phi_0});\ 
\|\alpha\|_{L^2_k}<\varepsilon\}\nonumber
\end{eqnarray}
for $\varepsilon >0$, then there is $\varepsilon >0$ such that
\begin{eqnarray}
\varphi_{k,\Phi_0}\big|_{U_{k,\Phi_0}(\varepsilon)}:U_{k,\Phi_0}(\varepsilon)\longrightarrow 
\varphi_{k,\Phi_0}(U_{k,\Phi_0}(\varepsilon))\nonumber
\end{eqnarray}
is a diffeomorphism from inverse function theorem.\\
\quad Set $V_{k,\Phi_0}(\varepsilon):=\{\alpha\in 
U_{k,\Phi_0}(\varepsilon);\ d^*_0\alpha=0,\ <\alpha,\Phi_0>_{L^2(\Phi_0)}=0\}$ where 
$<\alpha,\Phi_0>_{L^2(\Phi_0)}=\int_Mg_{\Phi_0}(\alpha,\Phi_0)vol_{g_{\Phi_0}}$.
\begin{lem}
Let $\Phi_0\in\widetilde{\mathcal{M}}_{\Phi_{qK}}(M)$ and $k\ge 2n+1$. Then 
there is an open neighborhood $W_{k,\Phi_0}\subset \mathcal{A}_k$ of 
$\pi_k(\Phi_0)$ which satisfies 
$W_{k,\Phi_0}\subset\pi_k\circ\varphi_{k,\Phi_0}(V_{k,\Phi_0}(\varepsilon))$ for any $\varepsilon >0$.
\end{lem}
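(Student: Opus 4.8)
The plan is to realise $V_{k,\Phi_0}(\varepsilon)$ as a slice for the $\mathcal{G}_{k+1}$-action near $\Phi_0$ and to deduce the statement from the implicit function theorem. Consider the map
\[
\Xi:\mathcal{G}_{k+1}\times V_{k,\Phi_0}(\varepsilon)\longrightarrow L^2_k(R_{\Phi_{qK}}(M)),\qquad \Xi(g,\alpha):=g\cdot\varphi_{k,\Phi_0}(\alpha),
\]
defined near the base point $((\mathrm{id},1),0)$. Since $\pi_k(\Xi(g,\alpha))=\pi_k(\varphi_{k,\Phi_0}(\alpha))$ and $\pi_k$ is an open map, it suffices to show that the image of $\Xi$ contains an $L^2_k$-neighbourhood $\widetilde{W}$ of $\Phi_0$: then $W_{k,\Phi_0}:=\pi_k(\widetilde{W})$ is open, contains $\pi_k(\Phi_0)$, and lies in $\pi_k\circ\varphi_{k,\Phi_0}(V_{k,\Phi_0}(\varepsilon))$, because every $\Phi\in\widetilde{W}$ can be written as $g\cdot\varphi_{k,\Phi_0}(\alpha)$ with $\alpha\in V_{k,\Phi_0}(\varepsilon)$.

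First I would compute the differential of $\Xi$ at the base point. In the $\mathcal{G}_{k+1}$-direction, an element $(X,r)$ of the Lie algebra $L^2_{k+1}(TM)\oplus\mathbb{R}$ acts infinitesimally by $\mathcal{L}_X\Phi_0+r\Phi_0$; since $d\Phi_0=0$, Cartan's formula gives $\mathcal{L}_X\Phi_0=d(\iota_X\Phi_0)$, where $\iota_X\Phi_0\in\Gamma(E^0_{\Phi_0})$ and $X\mapsto\iota_X\Phi_0$ is an isomorphism $TM\cong E^0_{\Phi_0}$. In the slice direction, $(\varphi_{k,\Phi_0*})_0(\beta)=-\beta$. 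Hence
\[
(\Xi_*)_{((\mathrm{id},1),0)}(X,r,\beta)=d(\iota_X\Phi_0)+r\Phi_0-\beta,
\]
with $\beta$ ranging over $T_0V_{k,\Phi_0}(\varepsilon)=\mathrm{Ker}(d^*_0)\cap\Phi_0^{\perp}\subset L^2_k(E^1_{\Phi_0})$.

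Next I would prove that this differential is a split surjection onto $L^2_k(E^1_{\Phi_0})$, using Lemma 3.3. By elliptic regularity the Hodge decomposition extends to the Sobolev completion,
\[
L^2_k(E^1_{\Phi_0})=\mathbb{H}^1_{\Phi_0}\oplus d\,L^2_{k+1}(E^0_{\Phi_0})\oplus d^*_1L^2_{k+1}(E^2_{\Phi_0}).
\]
Because $d\Phi_0=0$ and $d^*_0\Phi_0=0$, we have $\Phi_0\in\mathbb{H}^1_{\Phi_0}$, so $r\Phi_0$ sweeps out $\mathbb{R}\Phi_0$; as $X$ varies, $\iota_X\Phi_0$ sweeps out all of $E^0_{\Phi_0}$, so $d(\iota_X\Phi_0)$ covers $d\,L^2_{k+1}(E^0_{\Phi_0})$; and $d^*_0d^*_1=0$ together with $\langle\Phi_0,d^*_1\eta\rangle=\langle d\Phi_0,\eta\rangle=0$ show that $\mathrm{Ker}(d^*_0)\cap\Phi_0^{\perp}$ contains $d^*_1L^2_{k+1}(E^2_{\Phi_0})$ and the remaining harmonic directions, which $-\beta$ then covers. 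A bounded right inverse is built from the Green operator of $\triangle_{\sharp}$, which recovers the extra derivative so that the vector field $X$ indeed lands in $L^2_{k+1}(TM)$; thus the differential is surjective with closed, complemented (finite-dimensional) kernel.

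Granting this, the implicit function theorem supplies a local right inverse of $\Xi$ near the base point, so the image of $\Xi$ contains a neighbourhood $\widetilde{W}$ of $\Phi_0$, and the solution $(g,\alpha)$ attached to $\Phi\in\widetilde{W}$ tends to $((\mathrm{id},1),0)$ as $\Phi\to\Phi_0$; shrinking $\widetilde{W}$ keeps $\alpha$ inside $V_{k,\Phi_0}(\varepsilon)$ for the prescribed $\varepsilon$, which completes the argument. The main obstacle is analytic rather than algebraic: the $\mathrm{Diff}_0(M)$-action loses one derivative, so $\Xi$ is not $C^1$ in the naive Banach sense and the implicit function theorem cannot be applied verbatim. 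I would handle this in the standard way of Ebin's slice theorem, reformulating the gauge-fixing condition through the elliptic operator $d^*_0d_0$ on $\Gamma(E^0_{\Phi_0})$, whose ellipticity restores the lost regularity and places the linearised equation in the correct Sobolev scale. Note that only Lemma 3.3 is needed here; Lemma 3.4 is not required, since the harmonic directions other than $\mathbb{R}\Phi_0$ are already absorbed into the slice.
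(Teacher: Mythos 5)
Your argument is essentially the paper's own proof: the paper likewise applies the inverse function theorem to the action map $F(\Phi,f,c)=cf^*\Phi$ defined on $\varphi_{k,\Phi_0}(V_{k,\Phi_0}(\varepsilon))\times L^2_{k+1}({\rm Diff}_0(M))\times\mathbb{R}_{>0}$, with the same linearization $t\Phi_0+d\iota_X\Phi_0+\alpha$ at $(\Phi_0,{\rm Id}_M,1)$ and the same appeal to Lemma 3.3, and then takes $W_{k,\Phi_0}$ to be $\pi_k$ of the resulting open image, shrinking the slice radius to $\min\{\delta,\varepsilon\}$ exactly as you do. If anything your version is more scrupulous on the two delicate points: the paper asserts that the differential is an \emph{isomorphism} and $F$ a local diffeomorphism, overlooking the finite-dimensional kernel $\{X:\ d\iota_X\Phi_0=0\}$ (nontrivial whenever $\Phi_0$ admits infinitesimal automorphisms, e.g.\ the standard structure on $\mathbb{HP}^n$), whereas you correctly settle for a split surjection, which suffices for the open-image conclusion; and the paper silently applies the inverse function theorem despite the loss of one derivative in the ${\rm Diff}_0(M)$-direction, an analytic gap that you explicitly flag and propose to repair along the lines of Ebin's slice theorem.
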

\begin{proof}
Let the map 
\begin{eqnarray}
F:\varphi_{k,\Phi_0}(V_{k,\Phi_0}(\varepsilon))\times L^2_{k+1}({\rm 
Diff}_0(M))\times\mathbb{R}_{>0}\longrightarrow L^2_k(R_{\Phi_{qK}}(M))\nonumber
\end{eqnarray}
be given by $F(\Phi, f, c):=cf^*\Phi$ for 
$\Phi\in\varphi_{k,\Phi_0}(V_{k,\Phi_0}(\varepsilon))$, $f\in L^2_{k+1}({\rm Diff}_0(M))$ and 
$c\in\mathbb{R}_{>0}$.
We take
\begin{eqnarray}
(\alpha,X,t)&\in &V_{k,\Phi_0}\oplus 
L^2_{k+1}(TM)\oplus\mathbb{R}\nonumber\\
&=& 
T_{(\Phi_0,Id_M,1)}\{\varphi_{k,\Phi_0}(V_{k,\Phi_0}(\varepsilon))\times L^2_{k+1}({\rm Diff}_0(M))\times\mathbb{R}_{>0}\}\nonumber
\end{eqnarray}
where $V_{k,\Phi_0}:=\{\alpha\in L^2_k(E^1_{\Phi_0});\ d^*_0\alpha=0,\ 
<\alpha,\Phi_0>_{L^2(\Phi_0)}=0\}$. Then the differential of $F$ at 
$(\Phi_0,Id_M,1)$ is given by
\begin{eqnarray}
F_*|_{(\Phi_0,Id_M,1)}(\alpha,X,t)=t\Phi_0 + d\iota_X\Phi_0 + 
\alpha.\nonumber
\end{eqnarray}
Thus $F_*|_{(\Phi_0,Id_M,1)}:V_{k,\Phi_0}\oplus 
L^2_{k+1}(TM)\oplus\mathbb{R}\to L^2_k(E^1_{\Phi_0})$ is an isomorphism by Lemma 3.3, so there are 
some $\delta >0$ and neighborhood $N_{(Id_M,1)}\subset L^2_{k+1}({\rm 
Diff}_0(M))\times\mathbb{R}_{>0}=\mathcal{G}_{k+1}$ of $(Id_M,1)$ such that 
$F|_{\varphi_{k,\Phi_0}(V_{k,\Phi_0}(\delta))\times N_{(Id_M,1)}}$ is a 
diffeomorphism. In particular, $F(\varphi_{k,\Phi_0}(V_{k,\Phi_0}(\delta))\times 
N_{(Id_M,1)})$ is an open set of $L^2_k(R_{\Phi_{qK}}(M))$. Hence by putting 
\begin{eqnarray}
W_{k,\Phi_0}:=\pi_k\circ F(\varphi_{k,\Phi_0}(V_{k,\Phi_0}(\delta '))\times 
N_{(Id_M,1)})\nonumber
\end{eqnarray}
for $\delta  '=min\{\delta ,\varepsilon\}$, we have
\begin{eqnarray}
W_{k,\Phi_0}\subset\pi_k\circ 
F(\varphi_{k,\Phi_0}(V_{k,\Phi_0}(\varepsilon))\times 
N_{(Id_M,1)})=\pi_k\circ\varphi_{k,\Phi_0}(V_{k,\Phi_0}(\varepsilon)).\nonumber
\end{eqnarray}
\end{proof}
\begin{prop}
Let $\Phi_0\in\widetilde{\mathcal{M}}_{qK}$ and $n\ge 3$. Then there is 
$\varepsilon >0$ which satisfies the following condition. If 
$\Phi\in\varphi_{2n+1,\Phi_0}(V_{2n+1,\Phi_0}(\varepsilon))$ satisfies $d\Phi=0$, then 
$\Phi=\Phi_0$.
\end{prop}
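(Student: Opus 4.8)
The plan is to regard the assignment $\alpha\mapsto d(\varphi_{2n+1,\Phi_0}(\alpha))$ as a smooth nonlinear map defined on the slice, to identify its linearization at the origin with $-d$ restricted to the slice, and to show that this restriction is bounded below. A standard quadratic absorption argument then shows that the only small closed $\Phi$ in the image of the slice is $\Phi_0$ itself. Throughout, write $\Phi=\varphi_{2n+1,\Phi_0}(\alpha)$ with $\alpha\in V_{2n+1,\Phi_0}(\varepsilon)$, so that $\alpha$ is small in $L^2_{2n+1}$, $d^*_0\alpha=0$, and $\langle\alpha,\Phi_0\rangle_{L^2(\Phi_0)}=0$; the goal is to force $\alpha=0$, since $\varphi_{2n+1,\Phi_0}$ is a diffeomorphism near the origin.

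The first step is a coercive estimate on the slice. Decomposing $\alpha=h+d\beta_0+d^*_1\beta_2$ by the Hodge decomposition of Lemma 3.3 and applying $d^*_0$ kills $h$ (harmonic) and the term $d^*_1\beta_2$ (using $d^*_0 d^*_1=0$), leaving $0=d^*_0\alpha=d^*_0 d\beta_0$, whence $\|d\beta_0\|^2=0$; the condition $\alpha\perp\Phi_0$ together with $\mathbb{H}^1_{\Phi_0}=\mathbb{R}\Phi_0$ (Lemma 3.4) then removes $h$, so that $\alpha=d^*_1\beta_2\in d^*_1\Gamma(E^2_{\Phi_0})$. For such $\alpha$ one has $\triangle_{\sharp}\alpha=d^*_1 d\alpha$, so $d\alpha=0$ would give $\alpha\in\mathbb{H}^1_{\Phi_0}=\mathbb{R}\Phi_0$ and hence $\alpha=0$; thus $d$ is injective on $V_{2n+1,\Phi_0}$. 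To upgrade this to an a priori bound I would combine the elliptic inequality $\|\alpha\|_{L^2_{2n+1}}\le C(\|\triangle_{\sharp}\alpha\|_{L^2_{2n-1}}+\|\alpha\|_{L^2})$ for the second-order elliptic operator $\triangle_{\sharp}$ (available by Lemma 3.3) with $\|\triangle_{\sharp}\alpha\|_{L^2_{2n-1}}=\|d^*_1 d\alpha\|_{L^2_{2n-1}}\le C\|d\alpha\|_{L^2_{2n}}$ and the spectral gap $\|\alpha\|_{L^2}\le C\|d\alpha\|_{L^2}$, the latter coming from $\langle\triangle_{\sharp}\alpha,\alpha\rangle=\|d\alpha\|^2$ and positivity of the first nonzero eigenvalue on the orthogonal complement of $\mathbb{H}^1_{\Phi_0}$. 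This yields $\|\alpha\|_{L^2_{2n+1}}\le C\|d\alpha\|_{L^2_{2n}}$ for every $\alpha\in V_{2n+1,\Phi_0}$.

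The second step is to expand $\varphi_{2n+1,\Phi_0}$. Since $\varphi_{2n+1,\Phi_0}(0)=\Phi_0$, $d\Phi_0=0$, and $(\varphi_{2n+1,\Phi_0*})_0=-\mathrm{id}$, smoothness of $\varphi_{2n+1,\Phi_0}$ gives $\varphi_{2n+1,\Phi_0}(\alpha)=\Phi_0-\alpha+Q(\alpha)$ with $Q$ vanishing to second order, where the quadratic remainder estimate $\|Q(\alpha)\|_{L^2_{2n+1}}\le C\|\alpha\|_{L^2_{2n+1}}^2$ rests on the Sobolev multiplication theorem, valid because $2n+1>\tfrac12\dim M$ so that $L^2_{2n+1}$ is a Banach algebra. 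Applying $d$ gives $d\Phi=-d\alpha+dQ(\alpha)$ with $\|dQ(\alpha)\|_{L^2_{2n}}\le C'\|\alpha\|_{L^2_{2n+1}}^2$, so the hypothesis $d\Phi=0$ forces $\|d\alpha\|_{L^2_{2n}}\le C'\|\alpha\|_{L^2_{2n+1}}^2$.

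The final step combines the two estimates into $\|\alpha\|_{L^2_{2n+1}}\le C\|d\alpha\|_{L^2_{2n}}\le CC'\|\alpha\|_{L^2_{2n+1}}^2$. Choosing $\varepsilon<(CC')^{-1}$ then makes the factor $1-CC'\|\alpha\|_{L^2_{2n+1}}$ strictly positive, forcing $\|\alpha\|_{L^2_{2n+1}}=0$, hence $\alpha=0$ and $\Phi=\varphi_{2n+1,\Phi_0}(0)=\Phi_0$. I expect the main obstacle to be the coercive estimate of the second paragraph: it is the only step that genuinely uses both Lemma 3.3 and Lemma 3.4, and it requires careful bundle and Sobolev-order bookkeeping (the identities $d^*_0 d^*_1=0$ and $\triangle_{\sharp}\alpha=d^*_1 d\alpha$ on the slice, and the matching of orders so that $d\colon L^2_{2n+1}(E^1_{\Phi_0})\to L^2_{2n}(E^2_{\Phi_0})$ aligns with the elliptic regularity of $\triangle_{\sharp}$). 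Once that estimate is established, the quadratic expansion and the absorption argument are routine.
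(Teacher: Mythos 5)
Your proposal is correct and follows essentially the same route as the paper: you reduce to showing $\alpha=0$ via a linear coercive estimate $\|\alpha\|_{L^2_{2n+1}}\le C\|d\alpha\|_{L^2_{2n}}$ on the slice (the paper derives it from the Green operator $G_{\sharp}$ of $\triangle_{\sharp}$ after noting $\alpha\in d^*_1L^2_{2n+2}(E^2_{\Phi_0})$, you derive it from the elliptic estimate plus a spectral gap --- the same Hodge-theoretic content of Lemmas 3.3 and 3.4), combined with the quadratic bound $\|d\alpha\|_{L^2_{2n}}\le C'\|\alpha\|^2_{L^2_{2n+1}}$ from expanding $\varphi_{2n+1,\Phi_0}$ and Sobolev multiplication, followed by absorption. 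The only cosmetic difference is that the paper works with $a=(\bar{A}^1_{\Phi_0})^{-1}\alpha$ and controls the remainder via the explicit series $\rho(e^a)\Phi_0=\sum_{j}\frac{1}{j!}\{\rho_*(a)\}^j\Phi_0$, whereas you invoke the abstract Taylor expansion of the smooth map $\varphi_{2n+1,\Phi_0}$.
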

\begin{proof}
Fix $\varepsilon >0$ and 
$\Phi\in\varphi_{2n+1,\Phi_0}(V_{2n+1,\Phi_0}(\varepsilon))$. Then we may write $\Phi=\rho(e^a)\Phi_0$ for $a\in 
L^2_{2n+1}(\hat{P}^1_{Q_{\Phi_0}})$ which satisfies $\bar{A}^1_{\Phi_0}(a)\in 
V_{2n+1,\Phi_0}(\varepsilon)$. If we put
\begin{eqnarray}
\rho_*(a)\beta:=\frac{d}{dt}\bigg|_{t=0}(e^{-ta})^*\beta\nonumber
\end{eqnarray}
for $\beta\in\Omega^{\cdot}(M)$, then it follows 
$\rho_*(a)\Phi_0=-\bar{A}^1_{\Phi_0}(a)$ and
\begin{eqnarray}
\rho 
(e^a)\Phi_0=\sum^{\infty}_{j=0}\frac{1}{j!}\{\rho_*(a)\}^j\Phi_0.\nonumber
\end{eqnarray}
Then
\begin{eqnarray}
d\Phi &=& d\sum^{\infty}_{j=0}\frac{1}{j!}\{\rho_*(a)\}^j\Phi_0\nonumber\\
 &=& -d\bar{A}^1_{\Phi_0}(a) + 
\sum^{\infty}_{j=2}\frac{1}{j!}d\{\rho_*(a)\}^j\Phi_0.
\end{eqnarray}
Since the linear operators
\begin{eqnarray}
\rho_* &:& L^2_{2n+1}(End(TM))\longrightarrow 
L^2_{2n+1}(End(\Lambda^4T^*M)),\nonumber\\
d &:& L^2_{2n+1}(\Lambda^4T^*M)\longrightarrow 
L^2_{2n}(\Lambda^5T^*M)\nonumber
\end{eqnarray}
are the bounded operators, there are constants $s,K_0,K_1>0$ depending only on 
$M$ and $\Phi_0$, such that
\begin{eqnarray}
\|\{\rho_*(a)\}^j\Phi_0\|_{L^2_{2n+1}} &\le & K_0s^j\| 
a\|^j_{L^2_{2n+1}}\|\Phi_0\|_{L^2_{2n+1}},\nonumber\\
\| d\beta\|_{L^2_{2n}} &\le & K_1\| \beta\|_{L^2_{2n+1}}\nonumber
\end{eqnarray}
for $a\in L^2_{2n+1}(\hat{P}^1_{Q_{\Phi_0}})$ and $\beta\in 
L^2_{2n+1}(\Lambda^4T^*M)$. So we have
\begin{eqnarray}
\|\sum^{\infty}_{j=2}\frac{1}{j!}d\{\rho_*(a)\}^j\Phi_0\|_{L^2_{2n}} &\le & 
\sum^{\infty}_{j=2}\frac{1}{j!}\|d\{\rho_*(a)\}^j\Phi_0\|_{L^2_{2n}}\nonumber\\
 &\le & K_0K_1\sum^{\infty}_{j=2}\frac{1}{j!}(s\| 
a\|_{L^2_{2n+1}})^j\|\Phi_0\|_{L^2_{2n+1}}\nonumber\\
 &=& K_0K_1s^2\|\Phi_0\|_{L^2_{2n+1}}\| a\|^2_{L^2_{2n+1}}f(s\| 
a\|_{L^2_{2n+1}}),\nonumber
\end{eqnarray}
where a $C^{\infty}$-function $f:\mathbb{R}\to\mathbb{R}$ is given by
\[ f(x)=
\left\{
\begin{array}{cc}
(e^x-1-x)/x^2 & (x\neq 0), \\
1/2 & (x = 0).
\end{array}
\right.
\]
If we take $\varepsilon\le (K_2 s)^{-1}$ where $K_2$ is the operator norm 
of the bounded operator
\begin{eqnarray}
(\bar{A}^1_{\Phi_0})^{-1}:L^2_{2n+1}(E^1_{\Phi_0})\longrightarrow 
L^2_{2n+1}(\hat{P}^1_{Q_{\Phi_0}}),\nonumber
\end{eqnarray}
then we have
\begin{eqnarray}
f(s\| a\|_{L^2_{2n+1}})\le f_{max}:=max\{f(x);\ x\in 
[0,1]\}<\infty.\nonumber
\end{eqnarray}
From now on, we suppose $d\Phi=0$. Then (2) gives 
\begin{eqnarray}
d\bar{A}^1_{\Phi_0}(a) = 
\sum^{\infty}_{j=2}\frac{1}{j!}d\{\rho_*(a)\}^j\Phi_0,\nonumber
\end{eqnarray}
so it follows
\begin{eqnarray}
\| d\bar{A}^1_{\Phi_0}(a)\|_{{L^2_{2n}}} &=& \| 
\sum^{\infty}_{j=2}\frac{1}{j!}d\{\rho_*(a)\}^j\Phi_0\|_{{L^2_{2n}}}\nonumber\\
 &\le & K_0K_1f_{max}s^2\|\Phi_0\|_{L^2_{2n+1}}\| a\|^2_{L^2_{2n+1}}.
\end{eqnarray}
\quad From Lemma 3.3 and Lemma 3.4, there is the decomposition
\begin{eqnarray}
L^2_{2n+1}(E^1_{\Phi_0}) &=& \mathbb{R}\Phi_0\oplus 
dL^2_{2n+2}(E^0_{\Phi_0})\oplus  d^*_1L^2_{2n+2}(E^2_{\Phi_0})\nonumber\\
 &=& \mathbb{R}\Phi_0\oplus L^2_{2n+1}(Im(\triangle_{\sharp})).\nonumber
\end{eqnarray}
Then $\bar{A}^1_{\Phi_0}(a)$ is an element of 
$d^*_1L^2_{2n+2}(E^2_{\Phi_0})$ since $\bar{A}^1_{\Phi_0}(a)\in V_{2n+1,\Phi_0}(\varepsilon)$. From 
Lemma 3.3, there is the Green operator
\begin{eqnarray}
G_{\sharp}: L^2_{2n-1}(Im(\triangle_{\sharp}))\longrightarrow 
L^2_{2n+1}(Im(\triangle_{\sharp})),\nonumber
\end{eqnarray}
which is the inverse operator of 
$\triangle_{\sharp}|_{L^2_{2n+1}(Im(\triangle_{\sharp}))}$. Since $d^*_0\bar{A}^1_{\Phi_0}(a)=0$, we have
\begin{eqnarray}
\| \bar{A}^1_{\Phi_0}(a)\|_{L^2_{2n+1}} &=& \| 
G_{\sharp}\triangle_{\sharp}\bar{A}^1_{\Phi_0}(a)\|_{L^2_{2n+1}}\nonumber\\
 &\le & K_3K_4\| d\bar{A}^1_{\Phi_0}(a)\|_{L^2_{2n}},
\end{eqnarray}
where $K_3$ and $K_4$ are the operator norms of $G_{\sharp}$ and $d^*_1$, 
respectively.\\
\quad Thus we have an inequality
\begin{eqnarray}
\| a \|_{L^2_{2n+1}} \le K_0K_1K_2K_3K_4s^2f_{max}\|\Phi_0\|_{L^2_{2n+1}}\| 
a\|^2_{L^2_{2n+1}},\nonumber
\end{eqnarray}
from (3) and (4). Then we obtain an estimate
\begin{eqnarray}
\| a \|_{L^2_{2n+1}}(1-K_0K_1K_2K_3K_4s^2f_{max}\|\Phi_0\|_{L^2_{2n+1}}\| 
a\|_{L^2_{2n+1}}) \le 0.\nonumber
\end{eqnarray}
So if we put 
$\varepsilon=min\{(K_2s)^{-1},(2K_0K_1K_2K_3K_4s^2f_{max}\|\Phi_0\|_{L^2_{2n+1}})^{-1}\}>0$, then it follows $\| a \|_{L^2_{2n+1}}=0$, which 
means $\Phi=\Phi_0$.
\end{proof}
\begin{prop}
Let $\Phi_0\in\widetilde{\mathcal{M}}_{qK}$ and $n\ge 3$. Then the set $\{\pi_k(\Phi_0)\}$ is an open subset of $\pi_k(\widetilde{\mathcal{M}}_{qK})$ for $k\ge 2n+1$.
\end{prop}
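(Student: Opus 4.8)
The plan is to deduce this openness statement directly from Lemma 3.5 and Proposition 3.6, by exhibiting an open neighborhood $W$ of $\pi_k(\Phi_0)$ in $\mathcal{A}_k$ whose intersection with $\pi_k(\widetilde{\mathcal{M}}_{qK})$ reduces to the single point $\{\pi_k(\Phi_0)\}$. Since $\{\pi_k(\Phi_0)\}$ carries the subspace topology inherited from $\mathcal{A}_k$, producing such an open $W$ is precisely what is required, and it automatically contains $\pi_k(\Phi_0)$ because $W$ is a neighborhood of that point.

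First I would fix the constant $\varepsilon>0$ furnished by Proposition 3.6, and then invoke Lemma 3.5 with this particular $\varepsilon$ to obtain an open neighborhood $W_{k,\Phi_0}\subset\mathcal{A}_k$ of $\pi_k(\Phi_0)$ satisfying $W_{k,\Phi_0}\subset\pi_k\circ\varphi_{k,\Phi_0}(V_{k,\Phi_0}(\varepsilon))$. I claim $W_{k,\Phi_0}$ is the required neighborhood. Take any $\Phi\in\widetilde{\mathcal{M}}_{qK}$ with $\pi_k(\Phi)\in W_{k,\Phi_0}$. By the displayed containment there is a representative $\Phi'\in\varphi_{k,\Phi_0}(V_{k,\Phi_0}(\varepsilon))$ lying in the same $\mathcal{G}_{k+1}$-orbit as $\Phi$, so $\Phi'=cf^*\Phi$ for some $(f,c)\in\mathcal{G}_{k+1}$. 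Because $f$ is a diffeomorphism and $d\Phi=0$, the pull-back commutes with $d$, giving $d\Phi'=cf^*d\Phi=0$; thus the slice representative remains closed.

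It then remains to apply Proposition 3.6. That proposition is stated at regularity $2n+1$, whereas here $k\ge 2n+1$, so I would record the elementary inclusion coming from the continuous Sobolev embedding $L^2_k\hookrightarrow L^2_{2n+1}$: since $\|\cdot\|_{L^2_{2n+1}}\le\|\cdot\|_{L^2_k}$ and the defining constraints $d^*_0\alpha=0$ and $\langle\alpha,\Phi_0\rangle_{L^2(\Phi_0)}=0$ are unchanged under this embedding, one has $V_{k,\Phi_0}(\varepsilon)\subset V_{2n+1,\Phi_0}(\varepsilon)$, and $\varphi_{k,\Phi_0}$ coincides with $\varphi_{2n+1,\Phi_0}$ on the smaller space since both are given by the same formula $\alpha\mapsto\rho(e^{(\bar{A}^1_{\Phi_0})^{-1}\alpha})\Phi_0$. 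Hence $\Phi'\in\varphi_{2n+1,\Phi_0}(V_{2n+1,\Phi_0}(\varepsilon))$ with $d\Phi'=0$, and Proposition 3.6 forces $\Phi'=\Phi_0$. Therefore $\pi_k(\Phi)=\pi_k(\Phi')=\pi_k(\Phi_0)$, which shows $W_{k,\Phi_0}\cap\pi_k(\widetilde{\mathcal{M}}_{qK})=\{\pi_k(\Phi_0)\}$ and completes the argument.

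The argument is essentially a packaging of the two preceding results, so I do not expect any genuinely hard analytic step to remain here; the one point requiring care is the bookkeeping in the last paragraph, namely verifying that passing from $k$ to $2n+1$ respects both the slice conditions and the map $\varphi$, together with the observation that the $\mathcal{G}_{k+1}$-action preserves the closedness condition $d\Phi=0$ so that Proposition 3.6 genuinely applies to the orbit representative $\Phi'$ rather than to $\Phi$ itself.
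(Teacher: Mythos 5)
Your proof is correct and follows essentially the same route as the paper: fix the $\varepsilon$ of Proposition 3.6, take the neighborhood $W_{k,\Phi_0}$ from Lemma 3.5, and show any point of $W_{k,\Phi_0}\cap\pi_k(\widetilde{\mathcal{M}}_{qK})$ has its slice representative equal to $\Phi_0$. The only difference is that you spell out two steps the paper leaves implicit, namely that the $\mathcal{G}_{k+1}$-action preserves closedness (so Proposition 3.6 applies to the representative) and that $\varphi_{k,\Phi_0}(V_{k,\Phi_0}(\varepsilon))\subset\varphi_{2n+1,\Phi_0}(V_{2n+1,\Phi_0}(\varepsilon))$ via the Sobolev inclusion.
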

\begin{proof}
Take $\varepsilon >0$ as in Proposition 3.6. We can take an open set 
$W_{k,\Phi_0}$ of $\mathcal{A}_k$ such that $W_{k,\Phi_0}\subset\pi_k\circ\varphi_{k,\Phi_0}(V_{k,\Phi_0}(\varepsilon))$ from Lemma 3.5. Then 
$W_{k,\Phi_0}\cap\pi_k(\widetilde{\mathcal{M}}_{qK})$ is an open set of $\pi_k(\widetilde{\mathcal{M}}_{qK})$. Let $x$ be an element of $W_{k,\Phi_0}\cap\pi_k(\widetilde{\mathcal{M}}_{qK})$. Then we may write $x=\pi_k(\Phi)$ for some 
$\Phi\in\varphi_{k,\Phi_0}(V_{k,\Phi_0}(\varepsilon))$, but Proposition 3.6 gives $\Phi=\Phi_0$ since $\varphi_{k,\Phi_0}(V_{k,\Phi_0}(\varepsilon))\subset \varphi_{2n+1,\Phi_0}(V_{2n+1,\Phi_0}(\varepsilon))$. Hence we have $W_{k,\Phi_0}\cap\pi_k(\widetilde{\mathcal{M}}_{qK})=\{\pi_k(\Phi_0)\}$.
\end{proof}
Thus we have shown that $\pi_k(\widetilde{\mathcal{M}}_{qK})$ is a discrete 
subset of $\mathcal{A}^k$ for $k\ge 2n+1$ and $n\ge 3$. Next we are going 
to prove Theorem 1.3.
\begin{lem}
Let $\Phi_A,\Phi_B\in\widetilde{\mathcal{M}}_{\Phi_{qK}}(M)$ and $N\ge 
2n+1$. Suppose that $\pi_k(\Phi_A)=\pi_k(\Phi_B)$ for any $k\ge N$. Then there 
is $(f,c)\in\mathcal{G}={\rm Diff}_0(M)\times\mathbb{R}_{>0}$ such that 
$cf^*\Phi_A=\Phi_B$.
\end{lem}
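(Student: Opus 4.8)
We have two quaternionic Kähler structures $\Phi_A, \Phi_B$ whose images under every $\pi_k$ ($k \ge N$) agree. The statement $\pi_k(\Phi_A) = \pi_k(\Phi_B)$ means there exists $(f_k, c_k) \in \mathcal{G}_{k+1} = L^2_{k+1}(\mathrm{Diff}_0(M)) \times \mathbb{R}_{>0}$ with $c_k f_k^* \Phi_A = \Phi_B$. The obstacle is that a priori these $(f_k, c_k)$ live only in Sobolev completions of the diffeomorphism group — we need to upgrade to an honest smooth diffeomorphism $f \in \mathrm{Diff}_0(M)$ and conclude $cf^*\Phi_A = \Phi_B$.

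**The plan.** First I would extract the scalar. The pullback $f_k^*$ preserves the total volume $\int_M \mathrm{vol}_{g_{\Phi_A}}$-type quantity determined by $\Phi_A$; more precisely, comparing the induced metrics, $g_{c_k f_k^* \Phi_A} = g_{\Phi_B}$ forces a relation between $c_k$ and the volumes of $g_{\Phi_A}, g_{\Phi_B}$, which are fixed numbers independent of $k$. Hence all the $c_k$ coincide with a single $c > 0$ determined by $\Phi_A$ and $\Phi_B$ alone. So we may as well absorb $c$ and assume $c_k = c$ for all $k$, reducing to the equation $f_k^*(c\,\Phi_A) = \Phi_B$ with a $k$-independent target.

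**Bootstrapping regularity.** The heart of the argument is elliptic regularity for the diffeomorphism. Fix one $L^2_{N+1}$-diffeomorphism $f = f_N$ satisfying $f^*(c\Phi_A) = \Phi_B$. Since $c\Phi_A$ and $\Phi_B$ are smooth sections (they lie in $\widetilde{\mathcal{M}}_{\Phi_{qK}}(M) \subset \Gamma(R_{\Phi_{qK}}(M))$, hence are $C^\infty$), I would argue that $f$ is automatically smooth. The mechanism is a standard regularity bootstrap: the equation $f^*(c\Phi_A) = \Phi_B$ expresses the first derivatives of $f$ algebraically in terms of $f$ itself and the smooth data $\Phi_A, \Phi_B$. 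Concretely, because $\Phi_{qK}$ has isotropy group $Sp(n)Sp(1) \subset O(4n)$ and the map $\sigma_{\Phi^V}$ identifies the structure with a metric-compatible $G$-reduction, the identity $f^*(c\Phi_A)=\Phi_B$ implies $f$ is an isometry (up to the scale $c$) between the smooth Riemannian metrics $g_{\Phi_A}$ and $c^{-1/2}g_{\Phi_B}$ in a suitable sense; a continuous (indeed $C^0$, by Sobolev embedding since $N \ge 2n+1$) metric isometry between smooth Riemannian manifolds is automatically a smooth isometry by the Myers–Steenrod theorem. This promotes $f$ to a genuine element of $\mathrm{Diff}_0(M)$, and smoothness of $c\Phi_A, \Phi_B$ then gives $cf^*\Phi_A = \Phi_B$ as an equation of smooth forms.

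**The main obstacle.** The delicate point is justifying that the $L^2_{k}$-equivalences for varying $k$ are mutually consistent, i.e. that we genuinely obtain a single $f$ rather than a sequence $\{f_k\}$ that might not stabilize. I would handle this by noting that if $f_j^*(c\Phi_A) = \Phi_B = f_k^*(c\Phi_A)$ then $(f_k \circ f_j^{-1})^*(c\Phi_A) = c\Phi_A$, so $f_k f_j^{-1}$ lies in the automorphism group of the structure $\Phi_A$, which for nonzero scalar curvature is a compact Lie group (the isometry group of the Einstein metric $g_{\Phi_A}$) sitting inside $\mathrm{Diff}_0(M)$. Thus the ambiguity is finite-dimensional and smooth, so a single smooth representative exists. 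The genuinely hard part is the regularity upgrade from the Sobolev diffeomorphism to a smooth one; once Myers–Steenrod applies, the remaining bookkeeping — fixing $c$ from the volumes and verifying $f \in \mathrm{Diff}_0(M)$ rather than merely $\mathrm{Diff}(M)$ (which follows since each $f_k$ is a limit of identity-connected diffeomorphisms and $C^0$-closeness plus smoothness keeps it in the identity component) — is routine.
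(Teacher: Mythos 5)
Your proposal is correct, and it rests on the same key external ingredient as the paper's proof --- the regularity theorem for isometries (you invoke Myers--Steenrod; the paper invokes Palais \cite{P}, which is the same circle of results) --- but the architecture is genuinely different. First, the scalar: the paper shows $c_k=c_N$ cohomologically (each $f_k$ is homotopic to the identity, so $[c_kf_k^*\Phi_A]_{dR}=c_k[\Phi_A]_{dR}$, and applying the harmonic projection of $g_{\Phi_A}$ to $c_kf_k^*\Phi_A=\Phi_B$ gives $c_k\Phi_A=c_N\Phi_A$); your volume-comparison argument is an equally valid substitute. Second, and more substantively, the paper never applies the regularity theorem to $f_N$ itself: it applies it to the compositions $\tilde{f}_k:=f_N\circ f_k^{-1}$, which fix $\Phi_A$ and hence preserve the single smooth metric $g_{\Phi_A}$, and then writes $f_N=\tilde{f}_k\circ f_k\in L^2_{k+1}({\rm Diff}_0(M))$ for every $k\ge N$, concluding $f_N\in\bigcap_{k\ge N}L^2_{k+1}({\rm Diff}_0(M))={\rm Diff}_0(M)$; this is precisely why the hypothesis is stated for all $k\ge N$. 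You instead apply Myers--Steenrod directly to $f_N$, viewed as a $C^1$ isometry from $(M,g_{\Phi_B})$ to $(M,c^{1/2}g_{\Phi_A})$ --- two different smooth metrics --- which yields smoothness of $f_N$ from the single equivalence at $k=N$; the price is that membership in the identity component ${\rm Diff}_0(M)$ must then be argued separately, which you do (correctly, though briefly) by $C^1$-approximation of $f_N$ by smooth elements of ${\rm Diff}_0(M)$, whereas the paper obtains it from the identification of the intersection of the Sobolev identity components with ${\rm Diff}_0(M)$. So your route is slightly more economical and in fact yields a stronger statement: the conclusion already follows from $\pi_N(\Phi_A)=\pi_N(\Phi_B)$ alone. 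Two minor points: your ``main obstacle'' paragraph about stabilizing the sequence $\{f_k\}$ is unnecessary, since the lemma asks only for one smooth pair $(f,c)$; and the isometry group of a compact Riemannian manifold is compact regardless of scalar curvature, so your restriction to nonzero scalar curvature there is not needed (the lemma is stated for all of $\widetilde{\mathcal{M}}_{\Phi_{qK}}(M)$, not just $\widetilde{\mathcal{M}}_{qK}$).
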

\begin{proof}
Suppose that we have $\pi_k(\Phi_A)=\pi_k(\Phi_B)$ for any $k\ge N$. Then there 
is $(f_k,c_k)\in\mathcal{G}_{k+1}$ such that $c_kf^*_k\Phi_A=\Phi_B$ for 
each $k$, so we have $c_Nf^*_N\Phi_A=c_kf^*_k\Phi_A$. Since each $f_k$ is 
homotopic to $Id_M$, we have $[c_kf^*_k\Phi_A]_{dR}=[c_k\Phi_A]_{dR}$ where 
$[\theta]_{dR}\in H^4_{dR}(M)$ is the de Rham class of a closed form 
$\theta\in\Omega^4$. Then if we denote by $\Pi_h$ the harmonic projection with 
respect to $g_{\Phi_A}$, we have
\begin{eqnarray}
\Pi_h(c_kf^*_k\Phi_A)=\Pi_h(c_k\Phi_A)=c_k\Phi_A.\nonumber
\end{eqnarray}
Hence it follows that $c_k\Phi_A = c_N\Phi_A$, which gives $c_k = c_N$. 
Then $f_N\circ f_k^{-1}\in L^2_{N+1}({\rm Diff}_0(M))$ is an element of
\begin{eqnarray}
I_{\Phi_A}:=\{f\in L^2_{N+1}({\rm Diff}_0(M));\ 
f^*\Phi_A=\Phi_A\}.\nonumber
\end{eqnarray}
According to \cite{P}, if a $C^1$ diffeomorphism $f:M\to M$ preserves a smooth 
Riemannian metric, then $f$ is smooth. Since each element of $I_{\Phi_A}$ 
preserves smooth Riemannian metric $g_{\Phi_A}$, then $I_{\Phi_A}$ is a 
subgroup of ${\rm Diff}_0(M)$. So $\tilde{f}_k:=f_N\circ f_k^{-1}$ is an element of 
${\rm Diff}_0(M)$, then we may write $f_N=\tilde{f}_k\circ f_k$, which is an element of $L^2_{k+1}({\rm Diff}_0(M))$. Thus $f_N$ is an element of
\begin{eqnarray}
\bigcap^{\infty}_{k=N}L^2_{k+1}({\rm Diff}_0(M))={\rm Diff}_0(M).\nonumber
\end{eqnarray}
\end{proof}
Now we have a quotient space 
$\mathcal{M}_{qK}:=\widetilde{\mathcal{M}}_{qK}/\mathcal{G}$ and the quotient map
\begin{eqnarray}
\pi_{qK}:\widetilde{\mathcal{M}}_{qK}\longrightarrow\mathcal{M}_{qK}.\nonumber
\end{eqnarray}
\begin{defi}
{\rm Quaternionic K\"ahler structures 
$\{\Phi_t\}_{t\in\mathbb{R}}\subset\widetilde{\mathcal{M}}_{qK}$ is a continuous family if the map
\begin{eqnarray}
\widetilde{\Phi}_k:\mathbb{R}\longrightarrow 
L^2_k(R_{\Phi_{qK}}(M))\nonumber
\end{eqnarray}
defined by $\widetilde{\Phi}_k(t):=\Phi_t$ is a continuous map for each 
$k\ge 2n+1$.}
\end{defi}
Then a rigidity theorem for quaternionic K\"ahler structures is obtained as 
follows.
\begin{thm}
Let $\{\Phi_t\}_{t\in\mathbb{R}}\subset\widetilde{\mathcal{M}}_{qK}$ be a 
continuous family on a compact manifold $M$ of dimension $4n$ for $n\ge 3$. 
Then we have $\pi_{qK}(\Phi_t)=\pi_{qK}(\Phi_0)$ for any $t\in\mathbb{R}$.
\end{thm}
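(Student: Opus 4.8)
The plan is to run a connectedness-versus-discreteness argument at each Sobolev level $k \geq 2n+1$ and then glue the resulting identifications together using Lemma 3.8. The key structural fact, already recorded just before Lemma 3.8, is that $\pi_k(\widetilde{\mathcal{M}}_{qK})$ is a discrete subset of $\mathcal{A}_k$ for every $k \geq 2n+1$ and $n \geq 3$; this is exactly what Proposition 3.7 asserts, since it shows each singleton $\{\pi_k(\Phi_0)\}$ is relatively open in $\pi_k(\widetilde{\mathcal{M}}_{qK})$. So the analytic work is complete, and what remains is essentially topological.

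First I would fix $k \geq 2n+1$ and examine the map $t \mapsto \pi_k(\Phi_t)$. By Definition 3.9 the continuity of the family $\{\Phi_t\}$ means precisely that $t \mapsto \Phi_t$ is continuous as a map $\mathbb{R} \to L^2_k(R_{\Phi_{qK}}(M))$; composing with the continuous quotient map $\pi_k$ yields a continuous map $\mathbb{R} \to \mathcal{A}_k$ whose image lands inside $\pi_k(\widetilde{\mathcal{M}}_{qK})$. Since a map into a subspace is continuous as soon as it is continuous into the ambient space, this is a continuous map from the connected space $\mathbb{R}$ into the discrete subspace $\pi_k(\widetilde{\mathcal{M}}_{qK})$. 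Such a map must be constant, so $\pi_k(\Phi_t) = \pi_k(\Phi_0)$ for all $t \in \mathbb{R}$. As $k$ was arbitrary, this holds for every $k \geq 2n+1$ simultaneously.

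Having secured these identifications at all large $k$ at once, I would conclude by fixing $t$ and applying Lemma 3.8 with $\Phi_A = \Phi_0$, $\Phi_B = \Phi_t$ and $N = 2n+1$. The lemma converts the family of $L^2_{k+1}$-level equivalences into a single honest pair $(f,c) \in \mathcal{G} = \mathrm{Diff}_0(M) \times \mathbb{R}_{>0}$ satisfying $cf^*\Phi_0 = \Phi_t$. Thus $\Phi_0$ and $\Phi_t$ lie in one $\mathcal{G}$-orbit, which is precisely the assertion $\pi_{qK}(\Phi_t) = \pi_{qK}(\Phi_0)$.

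The substantive difficulties all sit upstream of this final assembly: discreteness rests on Proposition 3.6 together with the ellipticity and vanishing Lemmas 3.3 and 3.4, so I do not expect the last step itself to be the obstacle. The one point I would be careful about is that Lemma 3.8 requires the equality $\pi_k(\Phi_0) = \pi_k(\Phi_t)$ for \emph{all} $k \geq N$ at once, not merely for a single order $k$; this is what lets the regularity argument inside Lemma 3.8 (via \cite{P}) upgrade a compatible sequence of $L^2_{k+1}$-diffeomorphisms into one genuinely smooth element of $\mathrm{Diff}_0(M)$. For that reason it is essential to run the connectedness argument at every Sobolev order rather than stopping at $k = 2n+1$.
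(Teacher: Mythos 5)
Your proposal is correct and is essentially identical to the paper's own proof: the paper likewise observes that each $\pi_k\circ\widetilde{\Phi}_k$ is continuous, concludes $\pi_k(\Phi_t)=\pi_k(\Phi_0)$ from the discreteness of $\pi_k(\widetilde{\mathcal{M}}_{qK})$ established in Proposition 3.7, and then applies Lemma 3.8 to pass to $\pi_{qK}(\Phi_t)=\pi_{qK}(\Phi_0)$. Your added remark that the identification must hold at every Sobolev order $k\ge 2n+1$ simultaneously (so that Lemma 3.8's regularity argument applies) is exactly the point the paper's terse wording leaves implicit.
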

\begin{proof}
Since the maps $\pi_k\circ\widetilde{\Phi}_k$ are continuous maps, we have $\pi_k(\Phi_t)=\pi_k(\Phi_0)$ from Proposition 3.7. Then by Lemma 3.8, we obtain 
$\pi_{qK}(\Phi_t)=\pi_{qK}(\Phi_0)$.
\end{proof}
\section{Deformation complexes of torsion-free $G$-structures}
The purpose of this section is to give the proof of Lemma 3.3. We introduce 
new complexes; the deformation complexes of torsion-free $G$-structures. 
The new complex is elliptic at $k=1$ if $G$ satisfies a certain condition. 
Then we can construct an isomorphism between the deformation complex of 
torsion-free $Sp(n)Sp(1)$-structures and Goto's complex (1) of quaternionic 
K\"ahler structures.\\
\quad Let $G$ be a Lie subgroup of $O(N)$ and $M$ be a compact manifold of 
dimension $N$. Fix a torsion-free $G$-structure $Q\in \Gamma (R_G(M))$. Let 
$d^{\nabla}_k : \Omega^k(TM)\to \Omega^{k+1}(TM)$ be the covariant exterior 
derivative of the Levi-Civita connection $\nabla$ of $g_{Q}$. Then it is 
easy to see that $d^{\nabla}_k \Gamma(\hat{\textswab{g}}^k_Q)$ is a subspace 
of $\Gamma(\hat{\textswab{g}}^{k+1}_Q)$ for $k=1,2,\cdots$. Moreover, there 
is a following property.
\begin{prop}
Let $Q\in \Gamma (R_G(M))$ be a torsion-free $G$-structure, and $\nabla$ be 
the Levi-Civita connection of $g_Q$. Then $d^{\nabla}_{k+1}\circ 
d^{\nabla}_k(\Gamma (\Lambda^kT^*M\otimes TM))$ is a subspace of 
$\Gamma(\hat{\textswab{g}}^{k+2}_Q)$.
\end{prop}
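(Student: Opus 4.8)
The plan is to recognise the composite $d^{\nabla}_{k+1}\circ d^{\nabla}_k$ as the curvature of $\nabla$ acting on the $TM$-factor, and then to reduce the assertion to a pointwise, fibrewise algebraic statement about the Riemann tensor of the torsion-free $G$-connection $\nabla$. First I would invoke the standard identity for the covariant exterior derivative on $TM$-valued forms: for $\alpha\in\Gamma(\Lambda^kT^*M\otimes TM)=\Omega^k(TM)$ one has
$$d^{\nabla}_{k+1}d^{\nabla}_k\alpha=R^{\nabla}\wedge\alpha,$$
where $R^{\nabla}\in\Omega^2(\mathrm{End}(TM))$ is the curvature of $\nabla$ and $R^{\nabla}\wedge\alpha$ means that the $2$-form part of $R^{\nabla}$ is wedged onto the $k$-form part of $\alpha$ while the $\mathrm{End}(TM)$-part acts on the $TM$-value of $\alpha$. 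Since $Q$ is a torsion-free $G$-structure, the Levi-Civita connection $\nabla$ reduces to $\widetilde{Q}$, so in any local frame obtained from a section of $\widetilde{Q}$ the curvature is $\textswab{g}$-valued, i.e. $R^{\nabla}(X,Y)\in\textswab{g}\subset\mathbf{so}(N)$ for all $X,Y$.

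Because membership in $\hat{\textswab{g}}^{k+2}_Q$ is a fibrewise linear condition, it suffices to work at one point $p$ in an orthonormal $\widetilde{Q}$-frame $e_1,\dots,e_N$ with dual coframe $e^1,\dots,e^N$, and to treat a decomposable $\alpha=\omega\otimes s$ with $\omega\in\Lambda^kV^*$ and $s=\sum_c s_c e_c\in V=\mathbb{R}^N$. Writing $R^{\nabla}(e_a,e_b)e_c=\sum_d R_{abcd}e_d$, so that $R^{\nabla}(e_a,e_b)=\sum_{c,d}R_{abcd}\,e^c\otimes e_d\in\textswab{g}$, the identity above gives
$$R^{\nabla}\wedge\alpha=\frac{1}{2}\sum_{a,b,c,d}R_{abcd}\,s_c\,(e^a\wedge e^b\wedge\omega)\otimes e_d.$$
Here the vector slot is the index $d$, whereas its natural $\textswab{g}$-partner $c$ has been contracted against $s$; consequently this expression is not visibly an element of $\textswab{g}^{k+2}$, which by definition is spanned by $\eta\wedge\zeta$ with $\eta\in\Lambda^{k+1}$ and $\zeta\in\textswab{g}\subset\Lambda^1\otimes V$.

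The key step, and the only place where torsion-freeness enters beyond the reduction, is the first Bianchi identity $R_{abcd}+R_{bcad}+R_{cabd}=0$. Substituting $R_{abcd}=-R_{bcad}-R_{cabd}$ trades the curvature's first index pair $(a,b)$ for $(b,c)$ and $(c,a)$ respectively. In the first resulting family, for fixed $b,c$ the coefficients satisfy $\sum_{a,d}R_{bcad}\,e^a\otimes e_d=R^{\nabla}(e_b,e_c)\in\textswab{g}$ by the reduction, and similarly $\sum_{b,d}R_{cabd}\,e^b\otimes e_d=R^{\nabla}(e_c,e_a)\in\textswab{g}$ in the second. After reordering the wedge factors (which produces only a sign $(-1)^k$), each term takes the form $\eta\wedge\zeta$ with $\eta=s_c(e^b\wedge\omega)$, respectively $\eta=s_c(e^a\wedge\omega)$, in $\Lambda^{k+1}$, and $\zeta=R^{\nabla}(e_b,e_c)$, respectively $\zeta=R^{\nabla}(e_c,e_a)$, in $\textswab{g}$. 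Hence each term lies in $\textswab{g}^{k+2}$, and summing over all indices shows $R^{\nabla}\wedge\alpha\in\textswab{g}^{k+2}$ fibrewise, that is, $d^{\nabla}_{k+1}d^{\nabla}_k\alpha\in\Gamma(\hat{\textswab{g}}^{k+2}_Q)$.

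I expect the main obstacle to be conceptual rather than computational: one must recognise that the first Bianchi identity is precisely the device that converts the contracted index back into a free form-index correctly paired with the vector slot, thereby forcing the output into the proper subspace $\textswab{g}^{k+2}$ rather than into the full $\Lambda^{k+2}\otimes V$. Note that the reduction of $\nabla$ by itself only places the curvature, fibrewise, in $\Lambda^2\otimes\textswab{g}$, which would not suffice; it is the combination with Bianchi, hence with vanishing torsion, that closes the argument. The remaining tasks, namely pinning down the curvature identity with a fixed sign convention and bookkeeping the signs in the reordering, are routine.
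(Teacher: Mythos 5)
Your proof is correct and follows essentially the same route as the paper's: both exploit that the Levi-Civita connection, being reduced to $\widetilde{Q}$, has $\textswab{g}$-valued curvature, and then use the first Bianchi identity (torsion-freeness) to re-pair the contracted index with the vector slot, exhibiting $d^{\nabla}d^{\nabla}\alpha$ fibrewise as a sum of terms $\eta\wedge\zeta$ with $\eta\in\Lambda^{k+1}$, $\zeta\in\textswab{g}$. The only organizational difference is that the paper proves the case $k=0$ first and reduces general $k$ to it via $(d^{\nabla})^2(\alpha\otimes X)=\alpha\wedge(d^{\nabla})^2X$, whereas you run the same Bianchi computation directly for arbitrary $k$ on decomposable elements.
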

\begin{proof}
First we suppose $k=0$. We are going to show $(d^{\nabla})^2X \in\Gamma 
(\hat{\textswab{g}}^2_Q)$ for each $X\in\mathcal{X}(M)$.\\
\quad Fix $p\in M$, and take a neighborhood $U$ of $p$ and local 
orthonormal frame $\xi_1,\cdots ,\xi_N \in\mathcal{X}(U)$ and its dual frame 
$\xi^1,\cdots ,\xi^N \in\Omega^1(U)$ as in the proof of Proposition 2.3. Let $R\in\Omega^2(\hat{\textswab{g}}_Q)$ be 
a curvature tensor of $\nabla$, and we may write 
$R(\xi_i,\xi_j)\xi_l=\sum_mR^m_{ijl}\xi_m$ and $X=\sum_lX^l\xi_l\in\mathcal{X}(U)$ on $U$. Since the curvature tensor of $G$-connection is a section of the vector bundle induced by the adjoint action of $\textswab{g}$, so $\sum_{l,m}R^m_{ijl}\xi^l\otimes\xi_m\in\Gamma(T^*U\otimes TU)$ is a section of $\hat{\textswab{g}}^1_Q|_U$. Then we have
\begin{eqnarray}
(d^{\nabla})^2X = 
\sum_{i,j,l,m}R^m_{ijl}X^l\xi^i\wedge\xi^j\otimes\xi_m.\nonumber
\end{eqnarray}
So it follows that
\begin{eqnarray}
(d^{\nabla})^2X &=& 
\sum_{i,j,l,m}(-R^m_{jli}-R^m_{lij})X^l\xi^i\wedge\xi^j\otimes\xi_m\nonumber\\
 &=& 
2\sum_{j,l}X^l\xi^j\wedge(\sum_{i,m}R^m_{jli}\xi^i\otimes\xi_m)\in\Gamma (\hat{\textswab{g}}^2_Q).\nonumber
\end{eqnarray}
from the first Bianchi identity, $R^m_{ijl} + R^m_{jli} + R^m_{lij} = 0$.
\quad Next we show the case of $k\ge 1$. It suffices to show that 
$(d^{\nabla})^2(\alpha\otimes X)\in\Gamma(\hat{\textswab{g}}^{k+2}_Q)$ for each 
$\alpha\in\Omega^k(M)$ and $X\in\mathcal{X}(M)$. Then we have
\begin{eqnarray}
(d^{\nabla})^2(\alpha\otimes X) &=& d^{\nabla}(d\alpha\otimes X + 
(-1)^k\alpha\wedge\nabla X)\nonumber\\
&=& d^2\alpha\otimes X + (-1)^{k+1}d\alpha\wedge\nabla X + 
(-1)^kd\alpha\wedge\nabla X \nonumber\\
&\ & + \alpha\wedge (d^{\nabla})^2X \nonumber\\
&=& \alpha\wedge (d^{\nabla})^2X 
\in\Gamma(\hat{\textswab{g}}^{k+2}_Q)\nonumber
\end{eqnarray}
from the case of $k=0$.
\end{proof}
We define a differential operator $d^{Q}_k:\Gamma (\hat{P}^k_Q)\to\Gamma 
(\hat{P}^{k+1}_Q)$ by $d^{Q}_k:=pr_{\hat{P}^{k+1}_Q}\circ d^{\nabla}_k$, 
where $pr_{\hat{P}^k_Q}:\Lambda^kT^*M\otimes TM\to\hat{P}^k_Q$ is the 
orthogonal projection. Then from Proposition 4.1, we obtain the deformation complex 
of torsion-free $G$-structures
\begin{eqnarray}
\cdots \stackrel{d^Q_{k-1}}{\longrightarrow}\Gamma 
(\hat{P}^k_Q)\stackrel{d^Q_k}{\longrightarrow}\Gamma 
(\hat{P}^{k+1}_Q)\stackrel{d^Q_{k+1}}{\longrightarrow}\cdots.
\end{eqnarray}
\quad Next we will see that the complex (5) is elliptic at $k=1$. We denote by $Sb_k(u)$ the symbol of the differential operator 
$d_k^Q$ at $u\in V^*-\{ 0\}$. Let $pr_{P^k_{\textswab{g}}}:\Lambda^k\otimes 
V\to P^k_\textswab{g}$ be the orthogonal projection. Then we have
\begin{eqnarray}
Sb_k(u)(X) &=& pr_{P^{k+1}_{\textswab{g}}}(u\wedge X)\nonumber\\
 &=& pr_{P^{k+1}_{\textswab{g}}}\bigl(\sum_{i_1,\cdots,i_k,j}X_{i_1\cdots 
i_k}^j(u\wedge v^{i_1}\wedge\cdots\wedge v^{i_k})\otimes v_j\bigr)\nonumber
\end{eqnarray}
for $X=\sum_{i_1,\cdots,i_k,j}X_{i_1\cdots i_k}^j v^{i_1}\wedge\cdots\wedge 
v^{i_k}\otimes v_j\in P^k_\textswab{g}$. To prove that the complex (5) is 
elliptic at $k=1$, we have to see the complex
\begin{eqnarray}
\cdots 
\stackrel{Sb_{k-1}(u)}{\longrightarrow}P^k_\textswab{g}\stackrel{Sb_k(u)}{\longrightarrow}P^{k+1}_\textswab{g}\stackrel{Sb_{k+1}(u)}{\longrightarrow}\cdots\nonumber
\end{eqnarray}
is the exact sequence at $P^1_\textswab{g}$.\\
\quad We have an orthogonal decomposition $V=\mathbb{R}v\oplus W_v$ with 
respect to $g_0$ for each $v\in V-\{ 0\}$. The decomposition induces the 
orthogonal projection $p_v:End(V)\to End(W_v)$, then we can consider the following 
conditions for $G\subset O(N)$.\\
\\
($\mathbf{C1}$)\ The linear map $p_v|_{\textswab{g}}:\textswab{g}\to 
End(W_v)$ is injective for each $v\in V-\{ 0\}$.\\
\\
The condition ($\mathbf{C1}$) is equivalent to the following condition.\\
\\
($\mathbf{C2}$)\ Let $v_1 , v_2 , \cdots, v_N\in V$ be any orthonormal 
basis and $v^1 , v^2 , \cdots, v^N\in V^*$ be its dual basis. Then for all 
$A=A_i^jv^i\otimes v_j\in\textswab{g}$, $A=0$ if $A_i^j=0$ for $i,j\neq 1$.\\
\begin{lem}
Suppose that the Lie subgroup $G\subset O(N)$ satisffies ($\mathbf{C1}$). 
For all $a\in End(V)$ and $u\in V^*-\{ 0\}$, we may write $a=b+u\otimes w$ 
for some $b\in\textswab{g}$ and $w\in V$ if $u \wedge a\in\textswab{g}^2$.
\end{lem}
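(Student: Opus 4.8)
The plan is to work in coordinates adapted to $u$ and to squeeze out of the hypothesis $u\wedge a\in\textswab{g}^2$ enough information to recover $a$ modulo its first column. Since the assertion is unchanged if $u$ is rescaled, and since ($\mathbf{C1}$) is a basis--free condition, I would first choose an orthonormal basis $v_1,\cdots,v_N$ of $V$ with dual basis $v^1,\cdots,v^N$ so that $u=v^1$; then $v^1\otimes V$ is precisely the space of endomorphisms supported on the first column, and the target is to find $b\in\textswab{g}$ agreeing with $a$ on $v_2,\cdots,v_N$. Writing $a=\sum_{i,j}a_i^jv^i\otimes v_j$, only the components with $i\ge 2$ enter $v^1\wedge a$ (the $i=1$ terms die because $v^1\wedge v^1=0$), which already reflects the fact that $v^1\wedge(v^1\otimes w)=0$. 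By the definition of $\textswab{g}^2$, together with the fact that $\{v^k\}$ spans $V^*$, the hypothesis allows me to write
\begin{eqnarray}
v^1\wedge a=\sum_k v^k\wedge B_k\nonumber
\end{eqnarray}
for some $B_k\in\textswab{g}$.

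Next I would expand both sides in the basis $\{(v^p\wedge v^q)\otimes v_j\}_{p<q}$ of $\Lambda^2\otimes V$ and compare coefficients. The left-hand side contains only terms with $p=1$, so for every pair $2\le p<q$ the coefficient of $(v^p\wedge v^q)\otimes v_j$ must vanish; on the right-hand side that coefficient is $(B_p)_q^j-(B_q)_p^j$, giving
\begin{eqnarray}
(B_p)_q^j=(B_q)_p^j\nonumber
\end{eqnarray}
for all $j$ and all $p,q\ge 2$, i.e. $B_pv_q=B_qv_p$. This identity holds for any representation of $v^1\wedge a$ of the above form, so no uniqueness statement is needed; it is the only place where the wedge structure is exploited.

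The decisive step is then a purely algebraic vanishing. For $i,j,q\ge 2$ set $T_{qij}:=(B_q)_i^j$. The identity just obtained says $T_{pqj}=T_{qpj}$, so $T$ is symmetric in its first two indices, while the inclusion $\textswab{g}\subset\mathbf{so}(N)$ gives $(B_q)_i^j=-(B_q)_j^i$, so $T$ is antisymmetric in its last two indices. A three--index tensor that is symmetric in one pair and skew in an overlapping pair is forced to vanish, whence $(B_q)_i^j=0$ for all $i,j,q\ge 2$. Thus each $B_q$ with $q\ge 2$ has vanishing lower--right block, and condition ($\mathbf{C2}$) (equivalently ($\mathbf{C1}$) applied to $v=v_1$) forces $B_q=0$ for every $q\ge 2$. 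Hence $v^1\wedge a=v^1\wedge B_1$, so $v^1\wedge(a-B_1)=0$; since the kernel of $v^1\wedge\,\cdot\,:\Lambda^1\otimes V\to\Lambda^2\otimes V$ is exactly $v^1\otimes V$, we get $a-B_1=v^1\otimes w$ for some $w\in V$, and $b:=B_1\in\textswab{g}$ is the desired element.

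I expect the main obstacle to be conceptual rather than computational: the hypothesis only constrains the columns of $a$ indexed by $v_2,\cdots,v_N$, and the symmetric part of $a$ on that block --- which is precisely the obstruction to writing $a=b+v^1\otimes w$ with $b$ skew --- is not killed by the relations $B_pv_q=B_qv_p$ on their own, so the naive guess $b=B_1$ looks as if it should fail. The point is that those relations, combined with the skew--symmetry of the $B_k$, force the whole $W_{v_1}$--block of each $B_q$ ($q\ge 2$) to vanish, and it is exactly here that ($\mathbf{C1}$) is indispensable, through ($\mathbf{C2}$). Apart from recognizing this symmetric/skew vanishing, the only delicate points are the sign bookkeeping in the coefficient comparison.
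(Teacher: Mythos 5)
Your proof is correct and follows essentially the same route as the paper's: after normalizing $u=v^1$, you extract the same relations $(B_p)_q^j=(B_q)_p^j$ for $p,q\ge 2$ by coefficient comparison, kill the lower-right blocks of the $B_q$ ($q\ge 2$) by the same symmetric/skew three-index vanishing argument, and invoke ($\mathbf{C2}$) to conclude $B_q=0$, exactly as in the paper. The only cosmetic difference is the finish: you identify $\ker\bigl(v^1\wedge\cdot:V^*\otimes V\to\Lambda^2\otimes V\bigr)=v^1\otimes V$ to get $a-B_1=v^1\otimes w$, whereas the paper reads off the components of $a$ explicitly; both yield $b=B_1$, $w$ as required.
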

\begin{proof}
We may take $u\in V^*-\{ 0\}$ as $g_0(u,u)=1$. Then we fix an orthonormal 
basis $v^1=u , v^2 , \cdots, v^N\in V^*$ and its dual basis $v_1 , v_2 , 
\cdots, v_N\in V$with respect to $g_0$. Suppose that $v^1 \wedge 
\sum_{j,k}a_j^k v^j\otimes v_k = \sum_{i,j,k}B_{ij}^k v^i\wedge v^j\otimes v_k$ for $a = 
\sum_{j,k}a_j^k v^j\otimes v_k\in End(V)$ and $\sum_{j,k}B_{ij}^k v^j\otimes v_k 
\in\textswab{g}$ for $i = 1,2,\cdots,N$. Then
\begin{eqnarray}
v^1 \wedge \sum_{j,k}a_j^k v^j\otimes v_k = \sum_{i\textless 
j}\sum_k(B_{ij}^k-B_{ji}^k) v^i\wedge v^j\otimes v_k.\nonumber
\end{eqnarray}
So we have
\begin{eqnarray}
a_j^k &=& B_{1j}^k-B_{j1}^k \quad (j\neq 1),\\
B_{ij}^k-B_{ji}^k &=& 0 \quad (i,j\neq 1).
\end{eqnarray}
Now $B_{ij}^k$ satisfies $B_{ij}^k=-B_{ik}^j$ for any $i,j,k$ since 
$\sum_{j,k}B_{ij}^k v^j\otimes v_k \in\textswab{g}\subset 
\mathbf{so}(N)$. Then (7) gives $B_{ij}^k=0$ for $i,j,k\neq 1$. Hence $B_{ij}^k=0$ for 
$i\neq 1$ and $j,k=1,\cdots, N$ from ($\mathbf{C2}$). Then
\begin{eqnarray}
a &=& \sum_{j,k}a_j^k v^j\otimes v_k\nonumber\\
&=& \sum_{k}a_1^k v^1\otimes v_k + \sum_{j\neq 1}\sum_k(B_{1j}^k-B_{j1}^k) 
v^j\otimes v_k.\nonumber
\end{eqnarray}
Since $B_{j1}^k=0$ for $j\neq 1$ and $k=1,\cdots, N$, we have
\begin{eqnarray}
a &=& v^1\otimes \sum_{k}a_1^k v_k + \sum_{j,k}B_{1j}^k v^j\otimes v_k - 
\sum_{k}B_{11}^k v^1\otimes v_k.\nonumber
\end{eqnarray}
Then we have finished the proof by putting $b=\sum_{j,k}B_{1j}^k v^j\otimes 
v_k$ and $w=\sum_{k}(a_1^k v_k - B_{11}^k) v_k$.
\end{proof}
\begin{prop}
Let $G$ be a Lie subgroup of $O(N)$ satisfying the condition 
($\mathbf{C1}$). Then the complex (5) is elliptic at $k=1$ for any torsion-free 
$G$-structure $Q$.
\end{prop}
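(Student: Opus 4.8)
The plan is to use the reduction already set up before the statement: the complex (5) is elliptic at $k=1$ precisely when the symbol sequence
\begin{eqnarray}
P^0_{\textswab{g}}\stackrel{Sb_0(u)}{\longrightarrow}P^1_{\textswab{g}}\stackrel{Sb_1(u)}{\longrightarrow}P^2_{\textswab{g}}\nonumber
\end{eqnarray}
is exact at $P^1_{\textswab{g}}$ for every $u\in V^*-\{0\}$. So the whole task is to verify this exactness, and the condition $(\mathbf{C1})$ will enter only through Lemma 4.2. First I would dispose of the inclusion $Im(Sb_0(u))\subset Ker(Sb_1(u))$, which is automatic because (5) is a complex: passing to principal symbols in $d^Q_1\circ d^Q_0=0$ gives $Sb_1(u)\circ Sb_0(u)=0$. (If one prefers a direct check, note that $u\wedge(u\otimes w)=0$ and $u\wedge pr_{\textswab{g}^1}(u\otimes w)\in\textswab{g}^2$, so $u\wedge Sb_0(u)(w)=u\wedge pr_{P^1_{\textswab{g}}}(u\otimes w)\in\textswab{g}^2$, whence $Sb_1(u)Sb_0(u)(w)=0$.) Thus all the content lies in the reverse inclusion.

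Next I would take $a\in P^1_{\textswab{g}}\subset End(V)$ with $Sb_1(u)(a)=0$ and translate this into the hypothesis of Lemma 4.2. Since $Sb_1(u)(a)=pr_{P^2_{\textswab{g}}}(u\wedge a)$, the assumption $Sb_1(u)(a)=0$ means exactly $u\wedge a\in\textswab{g}^2$. This is precisely the situation Lemma 4.2 addresses, so using that $G$ satisfies $(\mathbf{C1})$ I would write
\begin{eqnarray}
a=b+u\otimes w\nonumber
\end{eqnarray}
for some $b\in\textswab{g}$ and $w\in V$. This is the step where $(\mathbf{C1})$, via Lemma 4.2, does all the real work, and it is the only place the condition is invoked.

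To finish, I would apply the orthogonal projection $pr_{P^1_{\textswab{g}}}$ to the decomposition $a=b+u\otimes w$. Since $a\in P^1_{\textswab{g}}$ we have $pr_{P^1_{\textswab{g}}}(a)=a$, while $b\in\textswab{g}=\textswab{g}^1$ lies in the orthogonal complement of $P^1_{\textswab{g}}$ in $\Lambda^1\otimes V$, so $pr_{P^1_{\textswab{g}}}(b)=0$. Hence
\begin{eqnarray}
a=pr_{P^1_{\textswab{g}}}(u\otimes w)=Sb_0(u)(w)\in Im(Sb_0(u)),\nonumber
\end{eqnarray}
giving $Ker(Sb_1(u))\subset Im(Sb_0(u))$. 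Combined with the first paragraph this yields $Ker(Sb_1(u))=Im(Sb_0(u))$, i.e. exactness at $P^1_{\textswab{g}}$, so (5) is elliptic at $k=1$.

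I do not expect a serious obstacle here, since the genuinely hard algebraic input has been isolated in Lemma 4.2; within this proposition the only delicate points are the bookkeeping identity $Sb_1(u)(a)=0\iff u\wedge a\in\textswab{g}^2$ that matches the symbol hypothesis to the hypothesis of Lemma 4.2, and the use of the orthogonality $\textswab{g}^1\perp P^1_{\textswab{g}}$ in the correct degree to kill the $b$-term after projecting.
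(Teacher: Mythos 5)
Your proposal is correct and follows essentially the same route as the paper: interpret $Sb_1(u)a=0$ as $u\wedge a\in\textswab{g}^2$, invoke Lemma 4.2 to write $a=b+u\otimes w$, and project onto $P^1_{\textswab{g}}$ to conclude $a=Sb_0(u)w$. The only difference is that you also spell out the trivial inclusion $Im(Sb_0(u))\subset Ker(Sb_1(u))$, which the paper leaves implicit.
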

\begin{proof}
Let $a\in P^1_\textswab{g}$, $u\in V^*-\{0\}$ and $Sb_1(u)a=0$. Since 
$Sb_1(u)a=0$ means that $u\wedge a$ is an element of 
$\textswab{g}^2$, we may write $a=b+u\wedge w$ for some $b\in\textswab{g}$ and $w\in V$ 
from Lemma 4.2. Hence we obtain
\begin{eqnarray}
a = pr_{P^1_{\textswab{g}}}(a) = pr_{P^1_{\textswab{g}}}(b+u\wedge w) = 
pr_{P^1_{\textswab{g}}}(u\wedge w) = Sb_0(u)w.\nonumber
\end{eqnarray}
\end{proof}
\begin{prop}
If a Lie group $G\subset O(N)$ is defined by
\begin{eqnarray}
G:=\{g\in GL_N\mathbb{R};\rho (g)\Phi^V=\Phi^V\}\nonumber
\end{eqnarray}
for $\Phi^V\in\bigoplus^l_{i=1}\Lambda^{p_i}$, then $G$ satisfies the 
condition ($\mathbf{C1}$).
\end{prop}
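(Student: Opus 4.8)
The plan is to establish ($\mathbf{C1}$) directly (it is equivalent to ($\mathbf{C2}$)). Fix a unit vector $v_1 \in V$, complete it to an orthonormal basis $v_1, \dots, v_N$ with dual basis $v^1, \dots, v^N$, and take $A \in \textswab{g}$ lying in the kernel of $p_{v_1}|_{\textswab{g}}$; the goal is to show $A = 0$. Since $G \subset O(N)$ we have $\textswab{g} = \mathrm{Lie}(G) \subset \mathbf{so}(N)$, so $A$ is skew-symmetric, while $p_{v_1}(A) = 0$ says precisely that the $\mathrm{End}(W_{v_1})$-block of $A$ vanishes. In block form relative to $V = \mathbb{R}v_1 \oplus W_{v_1}$ these two facts force $A = v_1 \wedge w$ for some $w \in W_{v_1}$, that is $A(x) = g_0(v_1, x)\, w - g_0(w, x)\, v_1$. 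Arguing by contradiction, I assume $w \neq 0$; rescaling $A$ (legitimate since $\textswab{g}$ is a linear subspace) and choosing $v_2 = w/|w|$, I may take $A = v_1 \wedge v_2$, a generator of the rotation of the plane $P = \mathrm{span}(v_1, v_2)$.

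The crux is that $A \in \textswab{g}$ means exactly $\rho_*(A)\Phi^V = 0$, an infinitesimal $SO(2)$-invariance that rigidly constrains the shape of $\Phi^V$. Writing $U = \mathrm{span}(v_3, \dots, v_N)$, I would split each component by its $(v^1, v^2)$-content,
\begin{eqnarray}
\Phi^V_i = \zeta_i + v^1 \wedge \alpha_i + v^2 \wedge \beta_i + v^1 \wedge v^2 \wedge \eta_i, \nonumber
\end{eqnarray}
where $\zeta_i, \alpha_i, \beta_i, \eta_i \in \Lambda^{\cdot} U^*$. The derivation $\rho_*(A)$ sends $v^1 \mapsto v^2$, $v^2 \mapsto -v^1$ and annihilates $U^*$, so a short computation gives $\rho_*(A)\Phi^V_i = v^2 \wedge \alpha_i - v^1 \wedge \beta_i$. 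As $v^2 \wedge \alpha_i$ and $v^1 \wedge \beta_i$ lie in the independent summands $v^2 \wedge \Lambda^{\cdot} U^*$ and $v^1 \wedge \Lambda^{\cdot} U^*$, the equation $\rho_*(A)\Phi^V_i = 0$ forces $\alpha_i = \beta_i = 0$ for every $i$; thus no component contains exactly one of $v^1, v^2$, and
\begin{eqnarray}
\Phi^V_i = \zeta_i + v^1 \wedge v^2 \wedge \eta_i. \nonumber
\end{eqnarray}

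With this normal form in hand, I would produce the contradiction by exhibiting a \emph{non-skew} endomorphism that also stabilises $\Phi^V$. Let $B \in \mathrm{End}(V)$ be the diagonal map with $B(v_1) = v_1$, $B(v_2) = -v_2$ and $B(v_j) = 0$ for $j \geq 3$. Its derivation sends $v^1 \mapsto -v^1$, $v^2 \mapsto v^2$ and kills $U^*$, so $\rho_*(B)(v^1 \wedge v^2) = 0$ while $\rho_*(B)\zeta_i = \rho_*(B)\eta_i = 0$; hence $\rho_*(B)\Phi^V = 0$ and $B \in \textswab{g}$. But $B$ is symmetric and nonzero, so $B \notin \mathbf{so}(N)$, contradicting $\textswab{g} \subset \mathbf{so}(N)$. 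Therefore $w = 0$, so $A = 0$, and ($\mathbf{C1}$) holds.

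I expect the main obstacle to be the second paragraph: extracting the rigid normal form of $\Phi^V$ from the single equation $\rho_*(A)\Phi^V = 0$. The bookkeeping turns routine only once the basis is adapted to $A$ (so that $A$ is the planar rotation $v_1 \wedge v_2$) and $\Phi^V$ is split by $(v^1, v^2)$-content. The conceptual engine is that a planar rotation generator inside $\textswab{g}$ automatically drags in the non-compact generator $B$ of the same $\mathbf{sl}(2)$, and that is exactly the direction that cannot survive once $G \subset O(N)$; this is also the only place where the orthogonality hypothesis is used.
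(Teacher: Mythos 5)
Your proof is correct, and it takes a genuinely different route from the paper's. The paper argues in the equivalent formulation ($\mathbf{C2}$): for $A\in\textswab{g}$ with $A_i^j=0$ for $i,j\neq 1$, the invariance equation $\sum_{i,j}A_i^jv^i\wedge\iota_{v_j}\Phi^V=0$ splits by $v^1$-content into two separately vanishing sums, so the row part $\sum_{j}A_1^jv^1\otimes v_j$ and the column part $\sum_{i\ge 2}A_i^1v^i\otimes v_1$ each annihilate $\Phi^V$; each therefore lies in $\textswab{g}\subset\mathbf{so}(N)$, and a skew-symmetric matrix supported on a single row or a single column must vanish, giving $A=0$. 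You instead invoke skew-symmetry at the outset to put $A$ in the normal form $v_1\wedge w$, rescale to the planar rotation generator $v_1\wedge v_2$, derive the normal form $\Phi^V_i=\zeta_i+v^1\wedge v^2\wedge\eta_i$, and then produce the symmetric traceless matrix $B=\mathrm{diag}(1,-1,0,\dots,0)$ inside the stabilizer algebra, contradicting $\textswab{g}\subset\mathbf{so}(N)$. Both arguments pivot on the same key fact, namely that the full annihilator $\{X\in End(V);\ \rho_*(X)\Phi^V=0\}$ equals $Lie(G)$ and hence sits inside $\mathbf{so}(N)$ when $G\subset O(N)$; the difference is which non-skew element is fed into it. The paper uses pieces of $A$ itself, which is shorter and needs no contradiction; you manufacture a fresh element from the normal form of $\Phi^V$, which is longer but exposes the mechanism — a planar rotation generator in $\textswab{g}$ would drag the whole traceless $End(P)$ of that plane (your $\mathbf{sl}(2)$) into $\textswab{g}$, and orthogonality forbids this. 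A side benefit of your route is that the explicit $(v^1,v^2)$-content splitting makes rigorous a step the paper leaves tacit, namely why its displayed invariance equation splits into two separately vanishing sums.
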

\begin{proof}
Let $v_1,\cdots,v_n$ be an orthonormal basis of $V=\mathbb{R}^n$, and 
$v^1,\cdots,v^n$ be its dual basis. We suppose that $A=\sum_{i,j}A_i^jv^i\otimes 
v_j$ is an element of $\textswab{g}$ and $A_i^j=0$ for $i,j\neq 1$. From 
the definition of $G$, we have
\begin{eqnarray}
\sum_{1\le j\le n}A_1^jv^1\wedge\iota_{v_j}\Phi^V + \sum_{2\le i\le 
n}A_i^1v^i\wedge\iota_{v_1}\Phi^V = 0.\nonumber
\end{eqnarray}
So we have
\begin{eqnarray}
\sum_{1\le j\le n}A_1^jv^1\wedge\iota_{v_j}\Phi^V &=& 0,\nonumber\\
\sum_{2\le i\le n}A_i^1v^i\wedge\iota_{v_1}\Phi^V &=& 0.\nonumber
\end{eqnarray}
Thus we have shown $\sum_{1\le j\le n}A_1^jv^1\otimes v_j$ and $\sum_{2\le 
i\le n}A_i^1v^i\otimes v_1$ are the elements of 
$\textswab{g}\subset\mathbf{so}(N)$. Hence we obtain $A_1^j=A_i^1=0$ for any $i,j$.
\end{proof}
It is easy to see 
$A^{k+1}_{\Phi_0}(d^{\nabla}_k\beta)=dA^k_{\Phi_0}(\beta)$ for $\beta\in\Omega^k(TM)$ by direct calculation using local orthonormal 
frame appear in the proof of Proposition 2.3, and we have the following 
proposition.
\begin{prop}
Let $\Phi_0\in \widetilde{\mathcal{M}}_{\Phi^V}(M)$. Suppose that $dim 
E^k_{\Phi^V}=dim P^k_{\textswab{g}}$ for $k=l,l+1$. Then $\bar{A}^l$ and 
$\bar{A}^{l+1}$ are isomorphisms and 
$d\circ\bar{A}^l_{Q_{\Phi_0}}=\bar{A}^{l+1}_{\Phi_0}\circ d^{Q_{\Phi_0}}_{l+1}$.
\end{prop}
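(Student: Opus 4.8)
The first assertion needs no new argument. The hypothesis $dim\,E^k_{\Phi^V}=dim\,P^k_{\textswab{g}}$ for $k=l,l+1$ is precisely the condition under which the text has already asserted that the induced bundle map $\bar{A}^k_{\Phi}:\hat{P}^k_{Q_{\Phi}}\to E^k_{\Phi}$ is an isomorphism; applying this observation with $k=l$ and with $k=l+1$ gives that $\bar{A}^l$ and $\bar{A}^{l+1}$ are isomorphisms. Hence all the content lies in the intertwining identity, which I would establish directly from the commutation relation $A^{k+1}_{\Phi_0}(d^{\nabla}_k\beta)=dA^k_{\Phi_0}(\beta)$ stated just before the proposition.

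The plan is as follows. Fix $a\in\Gamma(\hat{P}^l_{Q_{\Phi_0}})$. Since $\textswab{g}^l\subset Ker(A^l_{\Phi^V})$, the full bundle map $A^l_{\Phi_0}$ annihilates $\hat{\textswab{g}}^l_{Q_{\Phi_0}}$, so $A^l_{\Phi_0}=\bar{A}^l_{\Phi_0}\circ pr_{\hat{P}^l_{Q_{\Phi_0}}}$; in particular $\bar{A}^l_{\Phi_0}(a)=A^l_{\Phi_0}(a)$ because $a$ already lies in $\hat{P}^l_{Q_{\Phi_0}}$. Applying $d$ and then the commutation relation gives
\[ d\circ\bar{A}^l_{\Phi_0}(a)=d\circ A^l_{\Phi_0}(a)=A^{l+1}_{\Phi_0}(d^{\nabla}_l a). \]

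Next I would use the orthogonal splitting $d^{\nabla}_l a\in\Gamma(\Lambda^{l+1}T^*M\otimes TM)=\Gamma(\hat{\textswab{g}}^{l+1}_{Q_{\Phi_0}})\oplus\Gamma(\hat{P}^{l+1}_{Q_{\Phi_0}})$, whose $\hat{P}$-component is $pr_{\hat{P}^{l+1}_{Q_{\Phi_0}}}(d^{\nabla}_l a)=d^{Q_{\Phi_0}}_l(a)$ by the very definition of $d^Q_l$. Because $A^{l+1}_{\Phi_0}$ kills $\hat{\textswab{g}}^{l+1}_{Q_{\Phi_0}}$ (again as $\textswab{g}^{l+1}\subset Ker(A^{l+1}_{\Phi^V})$), only the $\hat{P}^{l+1}$-part survives, so
\[ A^{l+1}_{\Phi_0}(d^{\nabla}_l a)=A^{l+1}_{\Phi_0}(d^{Q_{\Phi_0}}_l a)=\bar{A}^{l+1}_{\Phi_0}(d^{Q_{\Phi_0}}_l a). \]
Combining the two displays yields $d\circ\bar{A}^l_{\Phi_0}=\bar{A}^{l+1}_{\Phi_0}\circ d^{Q_{\Phi_0}}_l$, which is the asserted relation. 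Here the index on $d^Q$ must be $l$ (not $l+1$), so that the domain $\hat{P}^l_{Q_{\Phi_0}}$ and codomain $\hat{P}^{l+1}_{Q_{\Phi_0}}$ match those of $\bar{A}^l$ and $\bar{A}^{l+1}$; the "$l+1$" in the statement appears to be a slip in light of the definition $d^Q_k:\Gamma(\hat{P}^k_Q)\to\Gamma(\hat{P}^{k+1}_Q)$.

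As for the main obstacle, there really is very little once the relation $A^{k+1}_{\Phi_0}\circ d^{\nabla}_k=d\circ A^k_{\Phi_0}$ is granted: the entire argument is bookkeeping about where the two orthogonal summands $\hat{\textswab{g}}$ and $\hat{P}$ are sent by $A$. The one point deserving genuine care is the commutation relation itself, which the text declares "easy to see"; I would confirm it by the same local computation as in Proposition 2.3, choosing a local frame $\tau$ with $(\nabla\xi_i)_p=0$ so that $d^{\nabla}$ reduces to ordinary $d$ on the frame coefficients, whence $A^{k+1}(d^{\nabla}\beta)$ and $d(A^k\beta)$ agree term by term.
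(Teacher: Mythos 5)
Your proof is correct and follows essentially the same route the paper intends: the paper gives Proposition 4.5 no separate proof, presenting it as an immediate consequence of the commutation relation $A^{k+1}_{\Phi_0}(d^{\nabla}_k\beta)=dA^k_{\Phi_0}(\beta)$ together with the facts that $\textswab{g}^k\subset Ker(A^k_{\Phi^V})$ and that the dimension hypothesis makes $\bar{A}^k_{\Phi_0}$ an isomorphism, which is exactly the bookkeeping you carried out. You are also right that the subscript in the displayed identity should read $d^{Q_{\Phi_0}}_{l}$ rather than $d^{Q_{\Phi_0}}_{l+1}$, since by definition $d^Q_k:\Gamma(\hat{P}^k_Q)\to\Gamma(\hat{P}^{k+1}_Q)$ and the domain must match that of $\bar{A}^l$.
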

Now, we have $dim E^1_{\Phi^V}=dim P^1_{\textswab{g}}$ by the definition of 
$G$. So we have the following.
\begin{prop}
Let $\Phi_0\in \widetilde{\mathcal{M}}_{\Phi^V}(M)$ and suppose that $dim 
E^2_{\Phi^V}=dim P^2_{\textswab{g}}$. Then the complex (1) is an 
elliptic comlex at $k=1$.
\end{prop}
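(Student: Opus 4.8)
The plan is to transfer the ellipticity at $k=1$ of the deformation complex (5) of the torsion-free $G$-structure $Q_{\Phi_0}$ to Goto's complex (1), using the bundle maps $\bar{A}^k_{\Phi_0}$ to intertwine the two complexes in degrees $0,1,2$. First I would verify the hypotheses of the earlier results. Since $G$ is the isotropy group of $\Phi^V$, Proposition 4.4 shows that $G$ satisfies the condition $(\mathbf{C1})$. As $\Phi_0\in\widetilde{\mathcal{M}}_{\Phi^V}(M)$ we have $d\Phi_0=0$, and since $\dim E^2_{\Phi^V}=\dim P^2_{\textswab{g}}$ is exactly the assumption of Theorem 2.6, the structure $Q_{\Phi_0}$ is torsion-free and $\nabla^{\Phi_0}$ is a connection on $Q_{\Phi_0}$. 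Thus Proposition 4.3 applies to $Q_{\Phi_0}$: the complex (5) is elliptic at $k=1$, i.e. for every $u\in V^*-\{0\}$ the symbol sequence
\begin{eqnarray}
P^0_{\textswab{g}}\stackrel{Sb_0(u)}{\longrightarrow}P^1_{\textswab{g}}\stackrel{Sb_1(u)}{\longrightarrow}P^2_{\textswab{g}}\nonumber
\end{eqnarray}
satisfies $Ker(Sb_1(u))=Im(Sb_0(u))$.

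Next I would set up the intertwining. From the identity $A^{k+1}_{\Phi_0}(d^{\nabla}_k\beta)=dA^k_{\Phi_0}(\beta)$ recorded before Proposition 4.5, together with $d^Q_k=pr_{\hat{P}^{k+1}_{Q_{\Phi_0}}}\circ d^{\nabla}_k$ and the fact that $\bar{A}^{k+1}_{\Phi_0}$ annihilates $\hat{\textswab{g}}^{k+1}_{Q_{\Phi_0}}$ (because $\textswab{g}^{k+1}\subset Ker(A^{k+1}_{\Phi^V})$), one obtains $d\circ\bar{A}^k_{\Phi_0}=\bar{A}^{k+1}_{\Phi_0}\circ d^Q_k$ for every $k$. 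Writing $s_k(u)$ for the principal symbol of $d:\Gamma(E^k_{\Phi_0})\to\Gamma(E^{k+1}_{\Phi_0})$ and passing to symbols, this gives $s_k(u)\circ\bar{A}^k_{\Phi_0}=\bar{A}^{k+1}_{\Phi_0}\circ Sb_k(u)$. Here $\bar{A}^1_{\Phi_0}$ and $\bar{A}^2_{\Phi_0}$ are isomorphisms, since $\dim E^1_{\Phi^V}=\dim P^1_{\textswab{g}}$ holds by the definition of $G$ and $\dim E^2_{\Phi^V}=\dim P^2_{\textswab{g}}$ is our hypothesis, while $\bar{A}^0_{\Phi_0}$ is surjective onto $E^0_{\Phi_0}$, because $\textswab{g}^0=\{0\}$ forces $P^0_{\textswab{g}}=V$ and $E^0_{\Phi^V}=Im(A^0_{\Phi^V})$ by definition.

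With these facts the relation for $k=1$ yields $Ker(s_1(u))=\bar{A}^1_{\Phi_0}(Ker(Sb_1(u)))$, and the relation for $k=0$ together with the surjectivity of $\bar{A}^0_{\Phi_0}$ yields $Im(s_0(u))=\bar{A}^1_{\Phi_0}(Im(Sb_0(u)))$. Combining these with the exactness $Ker(Sb_1(u))=Im(Sb_0(u))$ from Proposition 4.3 gives $Ker(s_1(u))=Im(s_0(u))$ for all $u\neq 0$, which is precisely the ellipticity of (1) at $k=1$.

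The step I expect to be most delicate is the degree-zero end. Ellipticity at $k=1$ forces me to control the three terms in degrees $0,1,2$, yet the hypothesis $\dim E^2_{\Phi^V}=\dim P^2_{\textswab{g}}$ provides no corresponding dimension equality at $k=0$, so $\bar{A}^0_{\Phi_0}$ need not be an isomorphism. The point I would emphasize is that the image computation $Im(s_0(u))=\bar{A}^1_{\Phi_0}(Im(Sb_0(u)))$ only requires $\bar{A}^0_{\Phi_0}$ to be surjective onto $E^0_{\Phi_0}$, which is automatic from $E^0_{\Phi^V}=Im(A^0_{\Phi^V})$ and $P^0_{\textswab{g}}=V$; hence no extra hypothesis at $k=0$ is needed, and the argument closes.
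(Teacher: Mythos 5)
Your proof is correct and takes essentially the same route as the paper, which obtains Proposition 4.6 by combining Propositions 4.3, 4.4 and 4.5 to transfer the ellipticity of the complex (5) for the torsion-free $G$-structure $Q_{\Phi_0}$ to Goto's complex (1) via the intertwining maps $\bar{A}^k_{\Phi_0}$. You are in fact more explicit than the paper, which states the proposition without a written proof; in particular your observation that the degree-zero end needs only the surjectivity of $\bar{A}^0_{\Phi_0}$ (not an isomorphism, since no dimension equality is assumed at $k=0$) correctly fills the one detail the paper leaves implicit.
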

Since $dim E^2_{\Phi_{qK}} = dim P^2_{\mathbf{sp}(n)\oplus\mathbf{sp}(1)}$ 
from the proof of Theorem 3.2, we have shown Lemma 3.3.
\section{Bochner-Weitzenb\"ock formulas on the \\
quaternionic K\"ahler manifold}
\quad In this section we give a proof of Lemma 3.4.
\begin{lem}
Let $\Phi_0\in\widetilde{\mathcal{M}}_{qK}$, and 
$\triangle_{\sharp}=dd^*_0+d^*_1d$ be as in Lemma 3.3. Then we have $Ker\ 
\triangle_{\sharp}=\mathbb{R}\Phi_0$ for $n\ge 3$.
\end{lem}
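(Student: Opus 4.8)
The plan is to prove the two inclusions separately, the inclusion $\mathbb{R}\Phi_0\subseteq Ker\,\triangle_{\sharp}$ being immediate and the reverse inclusion carrying all the content. For the easy direction, recall from Theorem 2.6 that $d\Phi_0=0$ is equivalent to $\nabla^{\Phi_0}\Phi_0=0$, so $\Phi_0$ is a \emph{parallel} section of $E^1_{\Phi_0}\subset\Lambda^4T^*M$ (it is the image under $A^1_{\Phi_0}$ of the identity, hence the trivial $\sigma^0$-summand); in particular $d\Phi_0=0$ and $d^*_0\Phi_0=0$, whence $\triangle_{\sharp}\Phi_0=0$. For the reverse inclusion, since $M$ is compact I would first integrate by parts: for $\alpha\in\Gamma(E^1_{\Phi_0})$ one has $\langle\triangle_{\sharp}\alpha,\alpha\rangle_{L^2}=\|d^*_0\alpha\|^2_{L^2}+\|d\alpha\|^2_{L^2}$, so $\alpha\in Ker\,\triangle_{\sharp}$ forces $d\alpha=0$ and $d^*_0\alpha=0$. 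Thus it suffices to show that every $\alpha\in\Gamma(E^1_{\Phi_0})$ with $d\alpha=0$, $d^*_0\alpha=0$ and $\langle\alpha,\Phi_0\rangle_{L^2}=0$ vanishes identically.

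The strategy for this is the Bochner technique adapted to the parallel subbundle $E^1_{\Phi_0}$. Because $\Phi_0$ is parallel, the holonomy of $g_{\Phi_0}$ lies in $Sp(n)Sp(1)$, so the irreducible decomposition $E^1_{\Phi_{qK}}\otimes\mathbb{C}=\lambda^2_0\sigma^2\oplus\lambda^2_1\sigma^2\oplus\lambda^2_0\oplus\sigma^0$ recorded in Section 3 corresponds to a splitting of $E^1_{\Phi_0}$ into parallel subbundles $F_{\lambda^2_0\sigma^2}\oplus F_{\lambda^2_1\sigma^2}\oplus F_{\lambda^2_0}\oplus F_{\sigma^0}$, with $F_{\sigma^0}=\mathbb{R}\Phi_0$. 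Again since $\Phi_0$ is parallel, the operators $d$ and $d^*_0$ on $E^{\bullet}_{\Phi_0}$ are expressible through the Levi-Civita connection $\nabla=\nabla^{\Phi_0}$ composed with the algebraic bundle maps $A^k_{\Phi_0}$ (wedging with, and contracting against, $\Phi_0$). This lets me rewrite $\triangle_{\sharp}$ as a Laplace-type operator and compare it with the rough Laplacian via a Weitzenb\"ock identity of the form $\triangle_{\sharp}=\nabla^*\nabla+\mathcal{R}_{\Phi_0}$, where $\mathcal{R}_{\Phi_0}$ is a curvature endomorphism built from the quaternionic K\"ahler curvature tensor.

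The heart of the argument is the evaluation of $\mathcal{R}_{\Phi_0}$ on each parallel summand. I would decompose the quaternionic K\"ahler curvature into its constant-curvature $\mathbb{HP}^n$ model part (proportional to the scalar curvature $\kappa_{g_{\Phi_0}}$) and its trace-free quaternionic Weyl part $\mathcal{W}$. On the model part Schur's lemma forces a scalar action on each irreducible summand, equal to a Casimir constant (readable off from the highest weights $(\mu_1,\dots,\mu_n)$ already tabulated) times $\kappa_{g_{\Phi_0}}$. The $\mathcal{W}$-part does \emph{not} act as a scalar, and controlling it is exactly where the vanishing theorems of Homma and of Semmelmann--Weingart enter: they provide the estimates, via the decomposition of $\nabla^*\nabla$ into the squares of the natural $Sp(n)Sp(1)$-equivariant first-order operators (the quaternionic Dirac and twistor operators), showing that on each of the nontrivial summands $F_{\lambda^2_0}$, $F_{\lambda^2_0\sigma^2}$, $F_{\lambda^2_1\sigma^2}$ the full curvature contribution has the sign which, for a closed and $d^*_0$-coclosed $\alpha$, is incompatible with $\triangle_{\sharp}\alpha=0$ unless the corresponding component of $\alpha$ vanishes. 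Integrating the Weitzenb\"ock formula against $\alpha$ then yields $\nabla\alpha=0$ together with the vanishing of every nontrivial component, so $\alpha$ is parallel in $F_{\sigma^0}=\mathbb{R}\Phi_0$; combined with $\langle\alpha,\Phi_0\rangle_{L^2}=0$ this gives $\alpha=0$.

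The main obstacle is precisely this summand-by-summand curvature computation, and in particular making it work uniformly for both signs of $\kappa_{g_{\Phi_0}}$. A naive Bochner positivity argument succeeds only when the curvature term has a fixed favorable sign, which happens for at most one sign of the scalar curvature; to cover $\kappa_{g_{\Phi_0}}>0$ and $\kappa_{g_{\Phi_0}}<0$ simultaneously I expect to need the refined Weitzenb\"ock formulas of Semmelmann--Weingart, expressing $\triangle_{\sharp}$ as a sign-independent combination of the squares of the first-order equivariant operators rather than the bare $\nabla^*\nabla+\mathcal{R}_{\Phi_0}$ identity, so that the relevant kernels are controlled by the $\mathcal{W}$-independent eigenvalues alone. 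Verifying that $F_{\sigma^0}$ is the only summand on which all these first-order operators can vanish at once, and that the numerical constants arising under the hypothesis $n\ge 3$ are strictly of the required sign, is the delicate point on which the whole lemma rests.
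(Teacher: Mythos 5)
Your skeleton --- the easy inclusion, the reduction of $Ker\,\triangle_{\sharp}$ to $\{d\alpha=0,\ d^*_0\alpha=0\}$ by integration by parts, the splitting of $E^1_{\Phi_0}$ into the parallel summands $\hat{\mathbb{R}}_{\Phi_0}\oplus\hat{A}^+_{\Phi_0}\oplus\hat{A}^-_{\Phi_0}$ corresponding to $\sigma^0\oplus\lambda^2_0\sigma^2\oplus(\lambda^2_1\sigma^2\oplus\lambda^2_0)$, and the appeal to Homma and Semmelmann--Weingart --- coincides with the paper's, and your worry about sign-independence is exactly the right worry. But two steps at the core of your plan do not go through. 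First, the Weitzenb\"ock identity $\triangle_{\sharp}=\nabla^*\nabla+\mathcal{R}_{\Phi_0}$ with $\mathcal{R}_{\Phi_0}$ of order zero is unjustified: $\triangle_{\sharp}=dd^*_0+d^*_1d$ is built from the \emph{projected} adjoints $d^*_0=p_0\circ d^*$ and $d^*_1=pr_{E^1_{\Phi_0}}\circ d^*$, and its principal symbol at $u$ is $X\mapsto|u|^2X-pr_{E^1_{\Phi_0}}\bigl(u\wedge(1-p_0)\iota_{u}X\bigr)$, which ellipticity (Lemma 3.3) makes invertible but not scalar; hence $\triangle_{\sharp}-\nabla^*\nabla$ is a priori a second-order operator, not a curvature endomorphism, and the Bochner argument you describe cannot start. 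For the same reason, $d^*_0\alpha=0$ only annihilates the $E^0_{\Phi_0}$-component of $d^*\alpha$, so elements of $Ker\,\triangle_{\sharp}$ are \emph{not} harmonic $4$-forms, and vanishing theorems for harmonic forms cannot be applied to $\alpha$ or to its summands at this stage.

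Second --- and this is the missing idea --- you have no mechanism for turning the two conditions $d\alpha=0$, $d^*_0\alpha=0$, which constrain only the sum $\alpha=\alpha_0+\alpha_++\alpha_-$, into conditions on the individual summands; without that, a summand-by-summand curvature estimate has nothing to act on. The paper's device is the algebraic operator $J(\alpha)=*(\alpha\wedge\Phi_0^{n-2})$, which acts on the three summands by the distinct scalars $\frac{|\Phi_{qK}|^2}{c_n}$, $\frac{1}{n-1}\frac{|\Phi_{qK}|^2}{c_n}$, $-\frac{1}{n-1}\frac{|\Phi_{qK}|^2}{c_n}$: from $d\alpha=0$ one gets $d^*J(\alpha)=0$, i.e.\ a second linear relation $d^*(\alpha_0+\frac{1}{n-1}\alpha_+-\frac{1}{n-1}\alpha_-)=0$; combining it with $d^*_0\alpha=0$ gives $n\,d^*\alpha_0+2p_0d^*\alpha_+=0$, and a Schur-lemma identification of $d^*\alpha_0$ with an exact $1$-form and of $p_0d^*\alpha_+$ with a coexact $1$-form splits this into $d^*\alpha_0=0$ and $p_0d^*\alpha_+=0$ separately. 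Together with $\Pi_{\lambda^3_0\sigma^3}(d\alpha_+)=0$ (the $\lambda^3_0\sigma^3$-part of $E^2_{\Phi_0}$ is reachable only from $\hat{A}^+_{\Phi_0}$), these yield precisely $B_{-1,-2}\alpha_+=B_{1,3}\alpha_+=0$; then a linear combination of Homma's three identities (14)--(16), whose coefficients are negative exactly when $n\ge3$, kills the remaining $B_{a,b}\alpha_+$ and produces $\frac{2n+1}{n+2}\kappa_{g_{\Phi_0}}\alpha_+=0$. This is where the sign-independence actually comes from: overdetermination by the extra first-order conditions, not a signed curvature term. Only after $\alpha_+=0$ and $\alpha_0=c\Phi_0$ are established does one obtain $d\alpha_-=d^*\alpha_-=0$, so that $\alpha_-$ is an honest harmonic section to which the Homma/Semmelmann--Weingart vanishing theorems apply. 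Your proposal would need these ingredients, or substitutes for them, before it could be completed.
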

\begin{proof}
By the definition of $E^k_{\Phi_{qK}}$, we have 
$E^1_{\Phi_{qK}}=A^1(End\mathbb{R}^{4n})$. Since there is a natural decomposition
\begin{eqnarray}
End\mathbb{R}^{4n}=\mathbf{so}(4n)\oplus\mathbb{R}(Id_{\mathbb{R}^{4n}})\oplus 
\mathbf{symm}_0(4n),\nonumber
\end{eqnarray}
where $\mathbf{symm}_0(4n)=\{A\in End\mathbb{R}^{4n};\ ^tA=A,\ 
trace(A)=0\}$, we have
\begin{eqnarray}
E^1_{\Phi_{qK}} &=& A^1(\mathbf{so}(4n))\oplus 
A^1(\mathbb{R}Id_{\mathbb{R}^{4n}})\oplus A^1(\mathbf{symm}_0(4n))\nonumber\\
&=& A^+\oplus \mathbb{R}\Phi_{qK}\oplus A^-,\nonumber
\end{eqnarray}
by putting $A^+=A^1(\mathbf{so}(4n))$, $A^-=A^1(\mathbf{symm}_0(4n))$. Then 
the above decomposition induces $E^1_{\Phi_0}=\hat{A}^+_{\Phi_0}\oplus 
\hat{\mathbb{R}}_{\Phi_0}\oplus \hat{A}^-_{\Phi_0}$. Note that 
$A^+\otimes\mathbb{C}\cong\lambda^2_0\sigma^2$, 
$A^-\otimes\mathbb{C}\cong\lambda^2_1\sigma^2\oplus\lambda^2_0$ and $\mathbb{C}\Phi_{qK}\cong\sigma^0$.\\
\quad Let the map $J:E^1_{\Phi_0}\to E^1_{\Phi_0}$ be given by 
$J(\alpha):=*(\alpha\wedge\Phi^{n-2}_0)$ where $*$ is the Hodge star operator with 
respect to $g_{\Phi_0}$. Then we can calculate $J(\alpha)$ by using 
decomposition $E^1_{\Phi_0}=\hat{A}^+_{\Phi_0}\oplus \hat{\mathbb{R}}_{\Phi_0}\oplus 
\hat{A}^-_{\Phi_0}$.\\
(i)\ Let $\alpha=\Phi_0$. Then we have
\begin{eqnarray}
J(\Phi_0)=*(\Phi_0\wedge\Phi^{n-2}_0)=*\Phi^{n-1}_0=\frac{|\Phi_{qK}|^2}{c_n}\Phi_0\nonumber
\end{eqnarray}
where $c_n$ is given by $\Phi_{qK}^n=c_nvol_{g_0}$.\\
(ii)\ Let 
$\alpha=\sum_{i,j}a_i^j\xi^i\wedge\iota_{\xi_j}\Phi_0\in\hat{A}^+_{\Phi_0}$, where $\xi^i,\xi_j$ are as in the proof of Proposition 2.3, and 
$a_i^j=-a_j^i$. Then we have
\begin{eqnarray}
J(\alpha) &=& 
*(\sum_{i,j}a_i^j\xi^i\wedge\iota_{\xi_j}\Phi_0\wedge\Phi^{n-2}_0)\nonumber\\
 &=& 
\frac{1}{n-1}*(\sum_{i,j}a_i^j\xi^i\wedge\iota_{\xi_j}\Phi^{n-1}_0)\nonumber\\
 &=& 
\frac{1}{n-1}\sum_{i,j}a_i^j\iota_{\xi_i}(\xi^j\wedge*\Phi^{n-1}_0)\nonumber\\
 &=& 
-\frac{1}{n-1}\sum_{i,j}a_i^j\xi^j\wedge\iota_{\xi_i}*\Phi^{n-1}_0\nonumber\\
 &=& 
\frac{1}{n-1}\frac{|\Phi_{qK}|^2}{c_n}\sum_{i,j}a_j^i\xi^j\wedge\iota_{\xi_i}\Phi_0 = \frac{1}{n-1}\frac{|\Phi_{qK}|^2}{c_n}\alpha.\nonumber
\end{eqnarray}
(iii)\ Let $\alpha\in\hat{A}^-_{\Phi_0}$. By calculating in the same way as 
(ii), we have
\begin{eqnarray}
J(\alpha) = -\frac{1}{n-1}\frac{|\Phi_{qK}|^2}{c_n}\alpha.\nonumber
\end{eqnarray}
From (i)(ii)(iii), it follows
\begin{eqnarray}
J(\alpha) = \frac{|\Phi_{qK}|^2}{c_n}(\alpha_0 + 
\frac{1}{n-1}\alpha_+-\frac{1}{n-1}\alpha_-)\nonumber
\end{eqnarray}
for $\alpha = \alpha_0 + \alpha_+ + 
\alpha_-\in\Gamma(\hat{\mathbb{R}}_{\Phi_0}\oplus \hat{A}^+_{\Phi_0}\oplus \hat{A}^-_{\Phi_0})$.\\
\quad Let $\alpha$ is an element of $Ker\ \triangle_{\sharp}$, which means 
$d\alpha=d^*_0\alpha=0$. Then
\begin{eqnarray}
d^*J(\alpha)=-*d(\alpha\wedge\Phi_0)=-*(d\alpha\wedge\Phi_0)=0.\nonumber
\end{eqnarray}
Let $p_0:\Lambda^3T^*M\to E^0_{\Phi_0}$ be the orthogonal projection. Since 
the formal adjoint $d^*_0$ is given by $d^*_0=p_0d^*$, we have two 
equations
\begin{eqnarray}
\frac{c_n}{|\Phi_{qK}|^2}d^*J(\alpha) &=& d^*(\alpha_0 + 
\frac{1}{n-1}\alpha_+-\frac{1}{n-1}\alpha_-)=0,\\
d^*_0\alpha &=& p_0d^*(\alpha_0 + \alpha_+ + \alpha_-)=0.
\end{eqnarray}
Then by calculating $(n-1)p_0\times (8)+(9)$, we obtain
\begin{eqnarray}
np_0d^*\alpha_0 + 2p_0d^*\alpha_+ =0.
\end{eqnarray}
Since $d^*\alpha_0\in\Gamma(E^0_{\Phi_0})$, the equation (10) is equivalent 
to
\begin{eqnarray}
nd^*\alpha_0 + 2p_0d^*\alpha_+ =0.
\end{eqnarray}
\quad There are non-trivial $Sp(n)Sp(1)$-equivariant maps
\begin{eqnarray}
\mathbf{T}_0:\mathbb{R}\Phi_{qK}\to\Lambda^0,\quad\mathbf{T}_1:E^0_{\Phi_{qK}}\to\Lambda^1,\quad\mathbf{T}_2:A^+\to\Lambda^2.\nonumber
\end{eqnarray}
Since $\mathbf{T}_0$ and $\mathbf{T}_1$ are isomorphisms and $\mathbf{T}_2$ is injective, each $\mathbf{T}_i$ is determmined uniquely up to scalar multiple by 
Schur's lemma. Then the bundle maps
\begin{eqnarray}
\hat{\mathbf{T}}_{0,\Phi_0} &:& 
\hat{\mathbb{R}}_{\Phi_0}\longrightarrow\Lambda^0T^*M,\nonumber\\
\hat{\mathbf{T}}_{1,\Phi_0} &:& 
E^0_{\Phi_0}\longrightarrow\Lambda^1T^*M,\nonumber\\
\hat{\mathbf{T}}_{2,\Phi_0} &:& 
\hat{A}^+_{\Phi_0}\longrightarrow\Lambda^2T^*M\nonumber
\end{eqnarray}
are induced by $\widetilde{Q_{\Phi_0}}$ and each $\mathbf{T}_i$. From 
Schur's lemma, there are nonzero constants $C_0,C_+\in\mathbb{R}$ which satisfy
\begin{eqnarray}
\hat{\mathbf{T}}_{1,\Phi_0}(d^*\alpha_0) &=& 
C_0d\hat{\mathbf{T}}_{0,\Phi_0}(\alpha_0),\nonumber\\
\hat{\mathbf{T}}_{1,\Phi_0}(p_0d^*\alpha_+) &=& 
C_+d^*\hat{\mathbf{T}}_{2,\Phi_0}(\alpha_+),\nonumber
\end{eqnarray}
since $\mathbb{R}\Phi_{qK}$, $E^0_{\Phi_{qK}}$ and $A^+$ are the 
irreducible $Sp(n)Sp(1)$-modules. So the equation (11) is equivalent to
\begin{eqnarray}
nC_0d\hat{\mathbf{T}}_{0,\Phi_0}(\alpha_0) + 
2C_+d^*\hat{\mathbf{T}}_{2,\Phi_0}(\alpha_+)=0,\nonumber
\end{eqnarray}
which gives 
$d\hat{\mathbf{T}}_{0,\Phi_0}(\alpha_0)=d^*\hat{\mathbf{T}}_{2,\Phi_0}(\alpha_+)=0$. Since $\hat{\mathbf{T}}_{1,\Phi_0}$ is an 
isomorphism, we obtain
\begin{eqnarray}
d^*\alpha_0=p_0d^*\alpha_+=0.
\end{eqnarray}
\quad Next we consider the condition $d\alpha=0$. There is the 
decomposition
\begin{eqnarray}
E^2_{\Phi_0} = (\hat{\lambda^3_0\sigma^3})_{\Phi_0}\oplus 
(\hat{\lambda^3_1\sigma^3})_{\Phi_0}\oplus (\hat{\lambda^3_0\sigma^1})_{\Phi_0}\oplus 
(\hat{\lambda^3_1\sigma^1})_{\Phi_0}\oplus 
(\hat{\lambda^1_0\sigma^3})_{\Phi_0}\oplus (\hat{\lambda^1_0\sigma^1})_{\Phi_0}\nonumber
\end{eqnarray}
induced by the decomposition of $E^2_{\Phi_{qK}}$ as in Section 3. By taking 
the orthogonal projection 
$\Pi_{\lambda^3_0\sigma^3}:E^2_{\Phi_0}\to(\hat{\lambda^3_0\sigma^3})_{\Phi_0}$, we have
\begin{eqnarray}
\Pi_{\lambda^3_0\sigma^3}(d\alpha_+)=0,
\end{eqnarray}
because the irreducible $Sp(n)Sp(1)$-decomposition of 
$V^*\otimes(\mathbb{R}\Phi_{qK}\oplus A^-)$ does not contain the component of 
$\lambda^3_0\sigma^3$.\\
\quad Since the space $A^+$ is isomorphic to $\lambda^2_0\sigma^2$ as an 
$Sp(n)Sp(1)$-module, there are the irreducible decomposition 
\begin{eqnarray}
T^*M\otimes\hat{A}^+_{\Phi_0} &=& 
(\hat{\lambda^3_1\sigma^3})_{\Phi_0}\oplus (\hat{\lambda^3_0\sigma^3})_{\Phi_0}\oplus 
(\hat{\lambda^1_0\sigma^3})_{\Phi_0}\nonumber\\
 &\ & \oplus (\hat{\lambda^3_0\sigma^1})_{\Phi_0}\oplus 
(\hat{\lambda^3_1\sigma^1})_{\Phi_0}\oplus (\hat{\lambda^1_0\sigma^1})_{\Phi_0},\nonumber
\end{eqnarray}
and orthogonal projections $pr_{\lambda^p_q\sigma^r}:T^*M\otimes\hat{A}^+_{\Phi_0}\to(\hat{\lambda^p_q\sigma^r})_{\Phi_0}$. Then differential operator $D_{a,b}$ on $\Gamma(\hat{A}^+_{\Phi_0})$ are defined by
\begin{eqnarray}
D_{1,1}:=pr_{\lambda^3_1\sigma^3}\circ\nabla,\quad 
D_{1,3}:=pr_{\lambda^3_0\sigma^3}\circ\nabla,\quad 
D_{1,-2}:=pr_{\lambda^1_0\sigma^3}\circ\nabla,\quad\nonumber\\
D_{-1,1}:=pr_{\lambda^3_1\sigma^1}\circ\nabla,\quad 
D_{-1,3}:=pr_{\lambda^3_0\sigma^1}\circ\nabla,\quad 
D_{-1,-2}:=pr_{\lambda^1_0\sigma^1}\circ\nabla,\nonumber
\end{eqnarray}
where $\nabla$ is the Levi-Civita connection of $g_{\Phi_0}$. According to 
\cite{Ho}, there are equations for $B_{a,b}:=(D_{a,b})^*D_{a,b}$ and the scalar 
curvature $\kappa_{g_{\Phi_0}}$, where $(D_{a,b})^*$ is the formal adjoint 
of $D_{a,b}$,
\begin{eqnarray}
\frac{1}{n+2}\kappa_{g_{\Phi_0}} &=& -B_{1,1} + 2B_{1,3} + 2nB_{1,-2}\\
&\ & - B_{-1,1} + 2B_{-1,3} + 2nB_{-1,-2},\nonumber\\
\frac{2}{n+2}\kappa_{g_{\Phi_0}} &=& -2(B_{1,1} + B_{1,3} + B_{1,-2})\\
&\ & + 4(B_{-1,1} + B_{-1,3} + B_{-1,-2}),\nonumber\\
\frac{8}{n+2}\kappa_{g_{\Phi_0}} &=& -2(n+2)B_{1,1} + 4(n-1)B_{1,3} - 
4n(n-1)B_{1,-2}\\
&\ & + 4(n+2)B_{-1,1} - 8(n-1)B_{-1,3} + 8n(n-1)B_{-1,-2}.\nonumber
\end{eqnarray}
Now we have $p_0d^*\alpha_+=0$ and $\Pi_{\lambda^3_0\sigma^3}(d\alpha_+)=0$ 
from (12)(13), which gives $B_{-1,-2}\alpha_+=0$ and $B_{1,3}\alpha_+=0$, 
respectively. So by calculating $2(n^2-n-2)\times (14) - n(n+3)\times (15) + 
\frac{2n+1}{2}\times (16)$, we have
\begin{eqnarray}
&\ & -(2n+1)(n-2)B_{1,1}\alpha_+ - 2(n-2)(n+2)B_{-1,1}\alpha_+\nonumber\\
&\ & \quad\quad\quad\quad - 4(2n+1)(n+1)B_{-1,3}\alpha_+\nonumber\\
&\ & = 0.\nonumber
\end{eqnarray}
Then by taking $L^2$ inner product with $\alpha_+$, it follows that
\begin{eqnarray}
&\ &-(2n+1)(n-2)\|D_{1,1}\alpha_+\|^2_{L^2} - 
2(n-2)(n+2)\|D_{-1,1}\alpha_+\|^2_{L^2}\nonumber\\
&\ & \quad\quad\quad\quad - 
4(2n+1)(n+1)\|D_{-1,3}\alpha_+\|^2_{L^2}\nonumber\\
&\ &  = 0,\nonumber
\end{eqnarray}
which gives $B_{1,1}\alpha_+ = B_{-1,1}\alpha_+ = B_{-1,3}\alpha_+ = 0$ 
since $n\ge 3$. Hence (14) and (15) gives $2nB_{1,-2}\alpha_+ = 
\frac{1}{n+2}\kappa_{g_{\Phi_0}}\alpha_+$ and $-B_{1,-2}\alpha_+ = 
\frac{1}{n+2}\kappa_{g_{\Phi_0}}\alpha_+$, respectively. Then by vanishing $B_{1,-2}$ from above 
two equations, we obtain $\frac{2n+1}{n+2}\kappa_{g_{\Phi_0}}\alpha_+=0$. 
Since we suppose the scalar curvature is nonzero, we have $\alpha_+ = 0$.\\
\quad From (11) and $\alpha_+ = 0$, we have $d^*\alpha_0=0$. If we write 
$\alpha_0=f\Phi_0$ for $f\in C^{\infty}(M)$, then $d^*\alpha_0=0$ means 
$df=0$ since the map $*(\cdot\wedge*\Phi_0):T^*M\to\Lambda^3 T^*M$ is injective. 
So $\alpha_0$ is given by $\alpha_0=c\Phi_0$ for $c\in\mathbb{R}$.\\
\quad Thus it follows $d\alpha_-=0$, $d^*\alpha_-=0$ from (8) and 
$d\alpha=d\alpha_0=d\alpha_+=0$. But there is no nonzero harmonic forms on 
$\Gamma((\hat{\lambda^2_1\sigma^2})_{\Phi_0}\oplus (\hat{\lambda^2_0})_{\Phi_0})$ 
according to the vanishing theorems on the quaternionic K\"ahler manifold 
\cite{Ho}\cite{S-W}, hence we obtain $\alpha_-=0$.
\end{proof}
\section{Quaternionic K\"ahler metrics and the reduced frame bundles}
\quad Let $g$ be a Riemannian metric on $M$ whose holonomy group $Hol(g)$ 
is isomorphic to $Sp(n)Sp(1)$. Then we set
\begin{eqnarray}
\widetilde{\mathcal{M}}_{qK}(g):=\{\Phi\in\widetilde{\mathcal{M}}_{qK};\ 
g_{\Phi}=g\}.\nonumber
\end{eqnarray}
The purpose of this section is showing that there is a unique element in 
$\widetilde{\mathcal{M}}_{qK}(g)$.\\
\quad We use following proposition, which can be seen in \cite{J} p.46.
\begin{prop}[{\cite{J}}]
Let $g$ be a quaternionic K\"ahler metric on a connected manifold $M$ of 
dimension $4n$. Then there is a one-to-one correspondence between 
$\widetilde{\mathcal{M}}_{qK}(g)$ and homogeneous space $Sp(n)Sp(1)\backslash N(Sp(n)Sp(1))$, where 
$N(Sp(n)Sp(1))$ is defined by
\begin{eqnarray}
N(Sp(n)Sp(1)):=\{x\in O(4n);\ x\{Sp(n)Sp(1)\}x^{-1}\subset 
Sp(n)Sp(1)\}.\nonumber
\end{eqnarray}
\end{prop}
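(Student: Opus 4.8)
The plan is to translate the statement into a count of parallel $Sp(n)Sp(1)$-subbundles of the orthonormal frame bundle. Write $G=Sp(n)Sp(1)$ and let $F_g\subset F(M)$ be the subbundle of $g$-orthonormal frames for the fixed metric $g$. Recall from Theorem 2.6 that for $\Phi\in\Gamma(R_{\Phi_{qK}}(M))$ the conditions $d\Phi=0$, ``$Q_{\Phi}$ is a torsion-free $G$-structure'' and $\nabla^{\Phi}\Phi=0$ are equivalent, and that $\Phi$ and its reduced bundle $\widetilde{Q_{\Phi}}$ determine one another. Since $Hol(g)\cong Sp(n)Sp(1)$ throughout this section, any $\Phi$ with $g_{\Phi}=g$ has $\kappa_{g_{\Phi}}=\kappa_g\neq 0$, so $\widetilde{\mathcal{M}}_{qK}(g)$ is exactly the set of $\Phi$ with $g_{\Phi}=g$ and $d\Phi=0$. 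First I would use $\Phi\mapsto\widetilde{Q_{\Phi}}$ to identify $\widetilde{\mathcal{M}}_{qK}(g)$ with the set of principal $G$-subbundles $\widetilde{Q}\subset F_g$ on which the Levi-Civita connection reduces; since reduction of a principal connection to $\widetilde{Q}$ is equivalent to parallel transport preserving $\widetilde{Q}$, this is the set of \emph{parallel} $G$-subbundles of $F_g$. The inverse map sends such a $\widetilde{Q}$ to $\Phi=\widetilde{Q}\times_G\Phi_{qK}$, which is parallel, hence closed, and has $g_{\Phi}=g$.

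Next I would fix a base point and a holonomy reduction. Choose $p\in M$ and a frame $u_0\in (F_g)_p$; viewing $Hol_{u_0}(g)$ as a subgroup of $O(4n)$ via $u_0$, the hypothesis that $g$ is quaternionic K\"ahler means the holonomy representation is the standard $Sp(n)Sp(1)$-representation, so $Hol_{u_0}(g)$ is $O(4n)$-conjugate to $G$. After right-translating $u_0$ by a suitable $a\in O(4n)$ I may arrange $Hol_{u_0}(g)=G$ exactly. The holonomy bundle $P$ through $u_0$ is then a parallel $G$-subbundle, giving the distinguished element of $\widetilde{\mathcal{M}}_{qK}(g)$ that will correspond to the identity coset.

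The core step is the coset description. Because $M$ is connected and $\widetilde{Q}$ is parallel, $\widetilde{Q}$ is recovered from its single fibre $\widetilde{Q}_p$, which is one right $G$-orbit $u_0xG$ for some $x\in O(4n)$. Writing the holonomy of a loop $\gamma$ at $p$ as $P_{\gamma}(u_0)=u_0a_{\gamma}$ with $a_{\gamma}\in G$, and using that parallel transport commutes with the right $O(4n)$-action, I obtain $P_{\gamma}(u_0xG)=u_0a_{\gamma}xG$; hence $\widetilde{Q}$ is parallel precisely when $a_{\gamma}xG=xG$ for all $\gamma$, i.e.\ $x^{-1}Gx\subseteq G$. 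This inclusion upgrades to $x^{-1}Gx=G$, since $x^{-1}Gx$ is a connected subgroup of $O(4n)$ of the same dimension as the connected group $G$, so $x\in N(G)$. Two parameters $x,x'\in N(G)$ give the same subbundle iff $xG=x'G$; as $x$ normalises $G$ we have $xG=Gx$, so the left and right coset spaces coincide and $\widetilde{Q}\mapsto Gx$ yields the bijection $\widetilde{\mathcal{M}}_{qK}(g)\cong G\backslash N(G)$. The main obstacle I anticipate is the holonomy-principle input underlying the first and third steps --- that a Levi-Civita-parallel $G$-subbundle is exactly a translate $P\cdot x$ of the holonomy bundle and is determined by one fibre --- which is the reduction-theorem content drawn from \cite{J}; the rest is the careful left/right coset bookkeeping that makes $N(G)/G$ and $G\backslash N(G)$ agree.
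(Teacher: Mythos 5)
The paper gives no proof of this proposition at all --- it is imported as a black box from Joyce \cite{J}, p.~46 --- so there is no internal argument to compare yours against; what you have written is a correct, self-contained reconstruction of the holonomy-principle argument that underlies the cited result. Three points are worth recording. First, your identification of $\widetilde{\mathcal{M}}_{qK}(g)$ (by definition a set of \emph{closed} $4$-forms) with the set of Levi-Civita-parallel principal $Sp(n)Sp(1)$-subbundles of the $g$-orthonormal frame bundle genuinely needs the implication ``closed $\Rightarrow$ parallel'', i.e.\ Theorem 2.6 combined with the dimension count of Theorem 3.2; this is available only for $n\ge 3$ (Swann's theorem, which fails in dimension $8$), so your proof establishes the statement in exactly the range $4n\ge 12$ in which the paper uses it. Second, your explicit appeal to the Section 6 standing hypothesis that $Hol(g)$ is \emph{all} of $Sp(n)Sp(1)$, not merely contained in it, is essential and correctly placed: it is what makes the loop condition read $x^{-1}Gx\subseteq G$ rather than $x^{-1}Hx\subseteq G$ for some smaller holonomy group $H$, and it also guarantees $\kappa_g\neq 0$, so that the forms produced by the inverse map $\widetilde{Q}\mapsto\widetilde{Q}\times_G\Phi_{qK}$ really lie in $\widetilde{\mathcal{M}}_{qK}$ (for a hyperK\"ahler metric the left-hand side would be empty while $G\backslash N(G)$ is not, so the statement must be read with this hypothesis). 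Third, the coset bookkeeping is sound: $x^{-1}Gx\subseteq G$ upgrades to equality because $x^{-1}Gx$ is a compact connected subgroup of $G$ of full dimension, hence $x$ normalizes $G$, $xG=Gx$, and your right-coset parametrization agrees with the stated quotient $Sp(n)Sp(1)\backslash N(Sp(n)Sp(1))$; this also matches the paper's one-sided definition of $N(Sp(n)Sp(1))$ by the same full-dimension argument.
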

\begin{prop}
Let $g$ be a quaternionic K\"ahler metric on a manifold $M$ of dimension 
$4n\ge 12$. Then $\widetilde{\mathcal{M}}_{qK}(g)$ has only one element.
\end{prop}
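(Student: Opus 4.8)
The plan is to deduce the statement from Proposition 6.1, which identifies $\widetilde{\mathcal{M}}_{qK}(g)$ with the homogeneous space $Sp(n)Sp(1)\backslash N(Sp(n)Sp(1))$. Thus it suffices to prove that the normalizer $N(Sp(n)Sp(1))$ coincides with $Sp(n)Sp(1)$ inside $O(4n)$, so that this homogeneous space is a single point. First I would observe that for $x\in N(Sp(n)Sp(1))$ the map $\mathrm{Ad}(x)\colon h\mapsto xhx^{-1}$ is an injective continuous homomorphism of the compact connected group $Sp(n)Sp(1)$ into itself; since its image is then a closed connected subgroup of full dimension, it is surjective, so $\mathrm{Ad}(x)$ is an automorphism of $Sp(n)Sp(1)$ and the inclusion in the definition of the normalizer is in fact an equality.

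Next I would compute the centralizer. Writing $\mathbb{R}^{4n}=\mathbb{H}^n$ with $Sp(n)$ acting by left multiplication and $Sp(1)$ by right multiplication by unit quaternions, the commutant of the $Sp(n)$-action is the algebra $\mathbb{H}$ of right multiplications, and imposing commutation with the $Sp(1)$-action cuts this down to the centre $\mathbb{R}$. Hence the representation is real-irreducible of real type, $\mathrm{End}_{Sp(n)Sp(1)}(\mathbb{R}^{4n})=\mathbb{R}\,\mathrm{Id}$, and intersecting $\mathbb{R}^{\times}\mathrm{Id}$ with $O(4n)$ gives the centralizer $C_{O(4n)}(Sp(n)Sp(1))=\{\pm\mathrm{Id}\}$, which is contained in $Sp(n)Sp(1)$ (as $-\mathrm{Id}$ is realised by $-I_n\in Sp(n)$).

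The core of the argument is to show that every automorphism $\mathrm{Ad}(x)$ is inner. Since $Sp(n)$ and $Sp(1)$ are simply connected, $Sp(n)\times Sp(1)$ is the universal cover of $Sp(n)Sp(1)$ and $\mathrm{Ad}(x)$ lifts to an automorphism of $Sp(n)\times Sp(1)$. For $n\ge 2$ the two simple factors have different dimensions and so cannot be interchanged, whence this lifted automorphism preserves each factor; as the outer automorphism groups of $Sp(n)$ (simple of type $C_n$) and of $Sp(1)=SU(2)$ (type $A_1$) are both trivial, the automorphism is inner. Therefore $\mathrm{Ad}(x)=\mathrm{Ad}(h)$ for some $h\in Sp(n)Sp(1)$, so $h^{-1}x$ centralizes $Sp(n)Sp(1)$ and lies in $\{\pm\mathrm{Id}\}\subset Sp(n)Sp(1)$; hence $x\in Sp(n)Sp(1)$, giving $N(Sp(n)Sp(1))=Sp(n)Sp(1)$ and finishing the proof.

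I expect the main obstacle to be controlling the component group of the normalizer. A Lie-algebra computation only shows that the centralizer of $\mathbf{sp}(n)\oplus\mathbf{sp}(1)$ in $\mathbf{so}(4n)$ vanishes, and hence that $N(Sp(n)Sp(1))$ and $Sp(n)Sp(1)$ share an identity component, but it says nothing about possible disconnected pieces. The genuine work is therefore the triviality of $\mathrm{Out}(Sp(n)Sp(1))$, and here one must be careful that the hypothesis $n\ge 2$ (guaranteed by $4n\ge 12$) is exactly what rules out an exchange of the two simple factors and makes the automorphism analysis go through.
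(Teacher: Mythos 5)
Your proposal is correct, but it takes a genuinely different route from the paper's. The paper also reduces, via Proposition 6.1, to showing $N(Sp(n)Sp(1))=Sp(n)Sp(1)$, but it then argues through the defining $4$-form rather than through Lie theory: for $x$ in the normalizer, $\rho(x^{-1})\Phi_{qK}$ is an $Sp(n)Sp(1)$-invariant $4$-form; since the trivial summand of the decomposition of $\Lambda^4$ quoted in Section 3 is one-dimensional, $\rho(x^{-1})\Phi_{qK}=\lambda\Phi_{qK}$; taking $n$-th powers and using $\det x=\pm1$ forces $\lambda=\pm1$; and the case $\lambda=-1$ is excluded by expanding $\rho(x^{-1})\omega_I$ in the basis $\omega_I,\omega_J,\omega_K$, which would force the contradiction $A_1^2+A_2^2+A_3^2=-1$. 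Hence $x$ fixes $\Phi_{qK}$ and so lies in its isotropy group. Your argument replaces this representation-theoretic computation with structure theory: conjugation gives an automorphism of $Sp(n)Sp(1)$, every such automorphism is inner (no factor swap for $n\ge2$, trivial outer automorphism groups for types $C_n$ and $A_1$), and the centralizer in $O(4n)$ is $\{\pm\mathrm{Id}\}\subset Sp(n)Sp(1)$. What the paper's route buys is that it stays entirely inside the machinery already built (the form $\Phi_{qK}$ and the $Sp(n)Sp(1)$-decomposition of $\Lambda^4$), with no appeal to classification-type facts; what your route buys is independence from that decomposition (so it works already for $n\ge2$), an explicit identification of where the dimension hypothesis enters (ruling out the factor swap that makes $n=1$, i.e.\ $Sp(1)Sp(1)=SO(4)$, genuinely fail), and a scheme that transfers verbatim to other stabilizer groups. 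You are also right to address the inclusion-versus-equality issue in the definition of $N(Sp(n)Sp(1))$; the paper never needs surjectivity of the conjugation map, so it can ignore this point, whereas your automorphism argument requires it, and your compactness-plus-dimension justification is sound.
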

\begin{proof}
From Proposition 5.1, it suffices to show that
\begin{eqnarray}
N(Sp(n)Sp(1))=Sp(n)Sp(1).\nonumber
\end{eqnarray}
Let $\rho$ be as in Section 3. If we take $x\in N(Sp(n)Sp(1))$, then 
$xhx^{-1}$ is an element of $Sp(n)Sp(1)$ for any $h\in Sp(n)Sp(1)$. So it follows 
that
\begin{eqnarray}
\rho(xhx^{-1})\Phi_{qK} &=& \Phi_{qK},\nonumber\\
\rho(h)\rho(x^{-1})\Phi_{qK} &=& \rho(x^{-1})\Phi_{qK},\nonumber
\end{eqnarray}
for any $h\in Sp(n)Sp(1)$. It means that $\rho(x^{-1})\Phi_{qK}$ is an 
element of
\begin{eqnarray}
(\Lambda^4)^{Sp(n)Sp(1)}:=\{\alpha\in\Lambda^4;\ \rho(h)\alpha = \alpha\ 
{\rm for\ any}\ h\in Sp(n)Sp(1)\}.\nonumber
\end{eqnarray}
Then we may write $\rho(x^{-1})\Phi_{qK}=\lambda\Phi_{qK}$ for 
$\lambda\in\mathbb{R}$ since $(\Lambda^4)^{Sp(n)Sp(1)}\cong\mathbb{R}$ from the 
irreducible $Sp(n)Sp(1)$-decomposition of $\Lambda^4$ in Section 3. Since we have
\begin{eqnarray}
(\rho(x^{-1})\Phi_{qK})^n = \rho(x^{-1})\Phi_{qK}^n = 
det(x)\Phi_{qK}^n,\nonumber
\end{eqnarray}
it follows that $\lambda=\pm 1$. So we have
\begin{eqnarray}
N(Sp(n)Sp(1))=\{x\in O(4n);\ \rho(x^{-1})\Phi_{qK}=\pm\Phi_{qK}\},\nonumber
\end{eqnarray}
and it follows that the irreducible $Sp(n)Sp(1)$-module 
$\mathbb{R}\omega_I\oplus\mathbb{R}\omega_J\oplus\mathbb{R}\omega_K\subset\Lambda^2$ is an 
irreducible $N(Sp(n)Sp(1))$-module. Then we may write
\begin{eqnarray}
\rho(x^{-1})\omega_I &=& A_1\omega_I + A_2\omega_J + 
A_3\omega_K,\nonumber\\
\rho(x^{-1})\omega_J &=& B_1\omega_I + B_2\omega_J + 
B_3\omega_K,\nonumber\\
\rho(x^{-1})\omega_K &=& C_1\omega_I + C_2\omega_J + C_3\omega_K,\nonumber
\end{eqnarray}
for some $A_i,B_i,C_i\in\mathbb{R}$. If $\rho(x^{-1})\Phi_{qK}=-\Phi_{qK}$, 
then $A_1^2 + A_2^2 + A_3^2 = -1$. Hence $\rho(x^{-1})\Phi_{qK}$ has to be 
$\rho(x^{-1})\Phi_{qK}=\Phi_{qK}$, which means $x\in Sp(n)Sp(1)$. Thus we 
have shown $N(Sp(n)Sp(1))=Sp(n)Sp(1)$.
\end{proof}

\bibliographystyle{plain}

\begin{thebibliography}{aaaaa}

\bibitem[1]{B}F. A. Bogomolov, Hamiltonian K\"ahler manifolds, \textit{Soviet Math. Dokl.}, {\bf19} (1978) pp. 1462-1465.
\bibitem[2]{E}D. Ebin, The manifolds of Riemannian metrics, 
\textit{Proceedings of the Symposium in Pure Mathematics of the American Mathematical Society}, {\bf15}, pp. 11-40.
\bibitem[3]{F}A. Fujiki, On the de Rham cohomology group of a 
compact K\"ahler symplectic manifold, \textit{Advanced Studies in Pure Math.} {\bf10}, 1987 Algebraic Geometry, Sendai, (1985) pp. 105-165.
\bibitem[4]{F-H}W. Fulton and J. Harris, \textit{Representation Theory A 
First Course,} Springer, Graduate Texts in Math. 129 (1991).
\bibitem[5]{G}R. Goto, Moduli spaces of topological calibrations, 
Calabi-Yau, hyperK\"ahler, $G_2$ and $Spin(7)$ structures, \textit{International Journal of Mathematics} Vol. 15 No.3 (2004) pp. 211-257.
\bibitem[6]{Ho}Y. Homma, Estimating the eigenvalues on Quaternionic 
K\"ahler Manifolds, \textit{International Journal of Mathematics},
@Vol. 17 No.6 (2006) pp. 665-691.
\bibitem[7]{Hr}R. Horan, A rigidity theorem for quaternionic 
K\"ahler manifolds, \textit{Differential Geometry and its Applications}, Vol. 6 (1996) pp. 189-196
\bibitem[8]{J}D. D. Joyce, \textit{Compact manifolds with special 
holonomy,} Oxford Math. Monographs (Oxford Science Publication, 2000).
\bibitem[9]{K-N}S. Kobayashi and K. Nomizu, \textit{Foundations of 
Differential Geometry, Vol I, II,} Wiley Interscience, 1996.
\bibitem[10]{L}C. LeBrun, A Rigidity Theorem for Quaternionic-Kahler Manifolds, \textit{Proc. Am. Math. Soc.} {\bf103} (1988) pp. 1205-1208.
\bibitem[11]{L-S}C. LeBrun and S. M. Salamon, Strong rigidity of 
positive quaternion-K\"ahler manifolds, \textit{Invent. Math}, {\bf118} (1994) pp. 108-132.
\bibitem[12]{N-N}Y. Nagatomo and T. Nitta, Vanishing theorems for quaternionic complexes, \textit{Bull. London Math. Soc.}, {\bf29} (1997) pp. 359-366.
\bibitem[13]{P}R. S. Palais, On the Differentiability of Isometries, \textit{Proceedings of the American Mathematical Society}, Vol. 8 (1957) 
pp. 805-807
\bibitem[14]{S-W}U. Semmelmann and G. Weingart, Vanishing Theorems 
for Quaternionic K\"ahler Manifolds, \textit{J. Reine Angew. Math.} {\bf544} (2002), 
pp. 111-132.
\bibitem[15]{S1}S. M. Salamon, \textit{Riemannian geometry and holonomy 
groups,} Pitman Research Notes in Mathematics Series 201.
\bibitem[16]{S2}S. M. Salamon, Quaternionic K\"ahler manifolds, 
\textit{Invent. Math.} {\bf67} (1982) pp. 143-171.
\bibitem[17]{S3}S. M. Salamon, Differential geometry of 
quaternionic manifolds, \textit{Annales Scientifiques de l'\'Ecole Normale Superieure}, {\bf19} (1986) pp. 31-55.
\bibitem[18]{S4}S. M. Salamon, Quaternion-K\"ahler geometry, In C. 
LeBrun and M. Wang, editors, \textit{Essays on Einstein Manifolds}, Volume VI of Surveys in Differential Geometry, pp. 83-122.
\bibitem[19]{Sw}A. Swann, Aspects symplectiques de la g\'eom\'etrie 
quaternionique, \textit{C. R. Acad. Sci. Paris, t.} {\bf308} (1989), S\'erie I, pp. 
225-228.\bibitem[20]{Ti}G. Tian, Smoothness of the universal 
deformation space of compact Calabi-Yau manifolds and its Peterson-Weil metric, in Mathematical \textit{Aspects of String Theory}, ed. S.-T. Yau, Advanced Series in 
Mathematical Physics, Vol. 10 (World Scientific Publishing Co., 
Singapore, 1987), pp. 629-646.
\bibitem[21]{To}A. N. Todolov, The Weil-Peterson geometry of the 
moduli space of SU($n\ge 3$)(Calabi-Yau) manifolds. I, \textit{Comm. Math. Phys.} {\bf126} (1989) pp. 325-346.
\end{thebibliography}

\end{document}